\documentclass[11pt]{amsart}
\usepackage{amsbsy}
\usepackage{graphicx,epsfig,subfigure,psfrag,color}
\usepackage{caption}
\begin{document}

\newtheorem{theorem}{Theorem}
\newtheorem{proposition}{Proposition}
\newtheorem{lemma}{Lemma}
\newtheorem{corollary}{Corollary}
\newtheorem{definition}{Definition}
\newtheorem{remark}{Remark}
\newcommand{\tex}{\textstyle}
\numberwithin{equation}{section} \numberwithin{theorem}{section}
\numberwithin{proposition}{section} \numberwithin{lemma}{section}
\numberwithin{corollary}{section}
\numberwithin{definition}{section} \numberwithin{remark}{section}
\newcommand{\ren}{\mathbb{R}^N}
\newcommand{\re}{\mathbb{R}}
\newcommand{\n}{\nabla}
\newcommand{\p}{\partial}
\newcommand{\iy}{\infty}
\newcommand{\pa}{\partial}
\newcommand{\fp}{\noindent}
\newcommand{\ms}{\medskip\vskip-.1cm}
\newcommand{\mpb}{\medskip}
\newcommand{\AAA}{{\bf A}}
\newcommand{\BB}{{\bf B}}
\newcommand{\CC}{{\bf C}}
\newcommand{\DD}{{\bf D}}
\newcommand{\EE}{{\bf E}}
\newcommand{\FF}{{\bf F}}
\newcommand{\GG}{{\bf G}}
\newcommand{\oo}{{\mathbf \omega}}
\newcommand{\Am}{{\bf A}_{2m}}
\newcommand{\CCC}{{\mathbf  C}}
\newcommand{\II}{{\mathrm{Im}}\,}
\newcommand{\RR}{{\mathrm{Re}}\,}
\newcommand{\eee}{{\mathrm  e}}
\newcommand{\LL}{L^2_\rho(\ren)}
\newcommand{\LLL}{L^2_{\rho^*}(\ren)}
\renewcommand{\a}{\alpha}
\renewcommand{\b}{\beta}
\newcommand{\g}{\gamma}
\newcommand{\G}{\Gamma}
\renewcommand{\d}{\delta}
\newcommand{\D}{\Delta}
\newcommand{\e}{\varepsilon}
\newcommand{\var}{\varphi}
\newcommand{\lll}{\l}
\renewcommand{\l}{\lambda}
\renewcommand{\o}{\omega}
\renewcommand{\O}{\Omega}
\newcommand{\s}{\sigma}
\renewcommand{\t}{\tau}
\renewcommand{\th}{\theta}
\newcommand{\z}{\zeta}
\newcommand{\wx}{\widetilde x}
\newcommand{\wt}{\widetilde t}
\newcommand{\noi}{\noindent}
\newcommand{\uu}{{\bf u}}
\newcommand{\xx}{{\bf x}}
\newcommand{\yy}{{\bf y}}
\newcommand{\zz}{{\bf z}}
\newcommand{\aaa}{{\bf a}}
\newcommand{\cc}{{\bf c}}
\newcommand{\jj}{{\bf j}}
\newcommand{\ggg}{{\bf g}}
\newcommand{\UU}{{\bf U}}
\newcommand{\YY}{{\bf Y}}
\newcommand{\HH}{{\bf H}}
\newcommand{\GGG}{{\bf G}}
\newcommand{\VV}{{\bf V}}
\newcommand{\ww}{{\bf w}}
\newcommand{\vv}{{\bf v}}
\newcommand{\hh}{{\bf h}}
\newcommand{\di}{{\rm div}\,}
\newcommand{\ii}{{\rm i}\,}
\newcommand{\inA}{\quad \mbox{in} \quad \ren \times \re_+}
\newcommand{\inB}{\quad \mbox{in} \quad}
\newcommand{\inC}{\quad \mbox{in} \quad \re \times \re_+}
\newcommand{\inD}{\quad \mbox{in} \quad \re}
\newcommand{\forA}{\quad \mbox{for} \quad}
\newcommand{\whereA}{,\quad \mbox{where} \quad}
\newcommand{\asA}{\quad \mbox{as} \quad}
\newcommand{\andA}{\quad \mbox{and} \quad}
\newcommand{\withA}{,\quad \mbox{with} \quad}
\newcommand{\orA}{,\quad \mbox{or} \quad}
\newcommand{\atA}{\quad \mbox{at} \quad}
\newcommand{\onA}{\quad \mbox{on} \quad}
\newcommand{\ef}{\eqref}
\newcommand{\mc}{\mathcal}
\newcommand{\mf}{\mathfrak}

\newcommand{\ssk}{\smallskip}
\newcommand{\LongA}{\quad \Longrightarrow \quad}
\def\com#1{\fbox{\parbox{6in}{\texttt{#1}}}}
\def\N{{\mathbb N}}
\def\A{{\cal A}}
\newcommand{\de}{\,d}
\newcommand{\eps}{\varepsilon}
\newcommand{\be}{\begin{equation}}
\newcommand{\ee}{\end{equation}}
\newcommand{\spt}{{\mbox spt}}
\newcommand{\ind}{{\mbox ind}}
\newcommand{\supp}{{\mbox supp}}
\newcommand{\dip}{\displaystyle}
\newcommand{\prt}{\partial}
\renewcommand{\theequation}{\thesection.\arabic{equation}}
\renewcommand{\baselinestretch}{1.1}
\newcommand{\Dm}{(-\D)^m}

\title{\bf ON COUNTABLE SUBSETS OF SOLUTIONS OF NONLINEAR HIGHER-ORDER ODES
AND ELLIPTIC PDES WITH INDEFINITE OPERATORS}

\author{Pablo~\'Alvarez-Caudevilla$^*$, Jonathan D.~Evans and Victor A.~Galaktionov}

\address{Universidad Carlos III de Madrid,
Av. Universidad 30, 28911-Legan\'es, Spain -- Work phone number:
+34-916249099} \email{pacaudev@math.uc3m.es}

\address{Department of Mathematical Sciences, University of Bath,
 Bath BA2 7AY, UK -- Work phone number: +44 (0)1225 386994 }
\email{masjde@bath.ac.uk}

\address{Frome Mathematical Institute, Frome BA11 1PA, UK}


\keywords{nonlinear ODEs and elliptic PDEs with indefinite operators,
countable families of solutions,
 matching/gluing of exponential tails, non-standard patterns}

\thanks{This work has not been directly funded by any institution}

\thanks{$*$ Corresponding author}

 \subjclass{35G20, 35K52, 35C06, 35C20}

\date{\today}


\begin{abstract}

Countable subsets of solutions of higher-order nonlinear ODEs and elliptic PDEs with indefinite  non-coercive
operators from the
reaction-diffusion, thin film  and dynamical system (DS) theories are obtained via a gluing/matching argument. 
In particular we study some classic and  quasilinear degenerate ODEs in $\re$, with boundary conditions at infinity $F(\iy)=0$,  with non-odd nonlinearities such as
 $$
 \begin{matrix}
 F^{(4)} =-F+F^2,\, \,\, F^{(4)}=-F+F^2{\rm e}^{F-1}, 
 \,\,\, (|F''|F'')''=-F + F^2,
 \\
 F^{(4)} =-|F|F+F^2, \,\,\,F^{(4)} =-F^3+F^4,
 \,\,\, F^{(6)}=F-F^2, \,\,\,\mbox{\em etc.},
 \end{matrix}
 $$
{\em etc.}, as well as 
  equations   with odd and non-smooth nonlinearities like
  $$
  F^{(4)}=-F+F^3, \,\, 
  \tex{
   F^{(4)}=-F-(|F|F-F)'', \,\, F^{(4)}=- \frac  F{\sqrt{|F|}}-(F^3-F)'',
   }
 \,\,\,\mbox{\em etc.}
  $$
Some of these ODEs are Hamiltonian and were studied in detail in the DS theory. On the basis of nonlinear operators, elliptic PDEs and variational theory, related
 polyharmonic   elliptic equations
  in $\ren$, $F(\iy)=0$,  such as
  $$
   \D^2 F=-F+F^2, \quad \D^2 F=-F -\D (F^2-F), \quad  \D^3 F=F-F^2, \quad \mbox{\em etc.};
 $$
 are also shown to admit countable families of solutions.
For such equations with non-odd functionals associated
 Lusternik--Schnirel'man (L--S) genus/category variational theory guaranteeing existence of a sequence of critical points
 does not apply. 
These  ODEs 
and  elliptic PDEs
(e.g., in the radial setting)
  are shown to admit at least two basic countable families ${\mathcal F}_{1,2}$
of positively dominant solutions  connected with two periodic orbits $\Gamma_{\rm max/min}$.
Patterns obtained by gluing together via exponentially decaying tails
of arbitrary finite samples from $\Gamma$'s  form a countable subset of homoclinics 
in $\re^4$ 
of an arbitrary complexity. These
create an unstable ``two-wings" attractor $W^{2,\infty}$also containing infinitely many periodic orbits and surrounded by an uncountable subset of semi-orbits blowing up at finite $x$'s and global chaotic ones.

\end{abstract}

\maketitle

\section{Introduction: higher-order  ODE and elliptic PDE models}
 \label{S1}

\subsection{Main nonlinear ODEs, elliptic PDEs, and a DS motivation}

In this paper we continue the study began in \cite{AEGnegI}, where we applied the  classic
 variational Lusternik-Schnirel'man (L--S) theory for nonlinear elliptic PDEs with {\bf non-coercive operators with odd nonlinearities}, which  guarantees existence of at least a {countable family} of critical points  ordered according to the genus (category) of the functional subsets involved.
 Our goal now is to show that {\sc infinite countable subsets of solutions} may exist for some non-linear higher-order ODEs and elliptic PDEs 
 with {\bf non-coercive indefinite} (non-odd-non-even nonlinearities) operators. In the sequel we will call them indefinite operators.

\smallskip

{\sc The classical canonical quadratic 4th-order ODE: a basis for quasilinear extensions.} Thus, to achieve our main target we begin with the simplest equation of this type:
 \begin{equation}
  \label{N1.1}
F^{(4)}= G(F) \equiv  -F+F^2   \inB \re, \quad F(\infty)=0,
  \end{equation}
to  show why and how (from a pure PDE point of view) it 
admits an
extremely wide  variety of solutions, which, in particular, cannot be
described by the classic variational L--S category/genus or fibering  tools
applied in  \cite{AEGnegI}, devoted to 
 non-coercive operators with odd nonlinearities. Since \ef{N1.1} in some aspects has been almost completely covered by the existing Hamiltonian DS  there (we will present some key references later on) we should mention that our eventual goal is to extend this experience doing with \ef{N1.1} to more complicated ODE models like
  \be  
  \label{Quas1}
  (|F''|F'')''=-F^3 - ((F')^3)' \inB \re, \quad F(\iy)=0,
  \ee
  or similar with indefinite operators. Such so-called  $(p,q)$-Laplacian type higher-order models  look quite exotic for the DS theory  and  applications  but are important and even crucial for a general nonlinear elliptic  operator theory.

Subsequently, we continue and extend our elliptic/ODE study to several other operators with non-odd nonlinearities.
 As a necessary extension (that we show below), we  deal with their  elliptic analogues
  of ODEs like  \ef{N1.1} and \ef{Quas1}:
  $$
   \D^2F=-F+F^3 \quad\mbox{and} \quad \D(|\D F|\D F)=-F^3 - \n \cdot (|\n F|^3 \n F) \inB \ren.
   $$
 Concerning some  applications, these  can be viewed as  semilinear and quasilinear stationary 4th-order equations from
reaction-diffusion theory  and
   were carefully studied for decades through the nonlinear operators theory (using techniques of homotopy of vector fields, rotations indices, Leray-Schauder and other degrees, Morse indices,  {\em etc.}) as well as in the Hamiltonian dynamical system (DS) theory; see references below. 

To this aim, in finding infinite countable sets of solutions for the models in hand, we shall present a matching/gluing patterns construction starting with
the  canonical (simplest) analytic fourth-order ODE with a non-odd nonlinearity \eqref{N1.1} and applying it to several other models, ODEs and elliptic PDEs. 
Thus 
we begin performing a careful and detailed asymptotic analysis for \eqref{N1.1} which will be important in constructing new patterns via our matching/gluing argument. 
Hence, Section \ref{SexpanA}  is devoted to 
a linearized analysis of exponentially decaying tails as $x \to \pm \iy$ of solution of the canonical ODE \eqref{N1.1} which might be extended to the other models shown in this paper. 

Once such analysis is performed we are able to construct the first basic countable family ${\mathcal F}_1=\{F_k\}_{k\geq 0}$ following our matching/gluing approach. 
Those $F_k$ are obtained after gluing $k$ elementary first variational patterns 
$\sim F_0$'s, that we will ascertain in the sequel of this paper, distributing them over the $x$-axis.
 See Section\;\ref{S2.LS} for variational applications, where  
through some numerical evidence, we identify $F_0$ among others, as providing the absolute minimum 
  of the associated functional for \eqref{N1.1}.

Moreover, to do so we must understand how to match two patterns $ \sim F_0$ by gluing their almost linear exponential oscillatory tails, after shifting them in space. In Section \ref{S2N}, we are able to perform this matching/gluing argument
catching exponential small tails through some geometric justification and a numerical evidence. 
Furthermore, we look for 
  patterns obtained by gluing at $x=0$ several 
$F_0$ patterns, under certain symmetry conditions. 
As a first simple  example, we take a superposition of two patterns of the form
$
F_\sigma(x)\approx F_0(x+a_n)+F_0(x-a_n)$ with $\s=\{+2,l_n,+2\},
$
  where $a_n  \sim \sqrt 2 \pi n\gg 1$ is a special sufficiently large shifting  parameter, to be determined from an elementary
  algebraic expression. Fixing the precise notation for such a gluing (shown in Section\;\ref{S2N}) 
  we denote by $F_{+l}$ a profile obtained via matching/gluing  with precisely $l$ intersections with the steady state of the ODE \eqref{N1.1}, i.e. $F_{*}\equiv 1$. 
  Now, assuming the profiles obtained via a matching/gluing argument we denote by $F_{+l,j,+l}$ as the gluing of two profiles $F_{+l}$ with $j$ number of zeros in between the two $F_{+l}$ profiles.
  In some situations we will use such number of zeros $j$ to measure the number of minimum points around the matching area. 
  Consequently, in such a process, and generalising the previous construction for more complicated matching of different profiles of the form $F_{+l}$, we identify several families of solutions 
  via the gluing argument described above.    
  
  After such a process we arrive at the situation when a simple higher-order problem such as \eqref{N1.1} admits infinite countable set of possible solutions, vanishing exponentially fast as $x\to \infty$. 
  We claim, and discuss throughout this paper that such a set of solutions of actually {\bf chaotic} in the sense that problem \eqref{N1.1} admits a family of solutions $F_\sigma(x)$ with an 
  arbitrary non-periodic index $\sigma$. 
  
  As we have already noticed, assuming equation \eqref{N1.1} with {odd non-coercive nonlinearities, replacing} 
  $F^2 \mapsto F^3$,
 at least a single
 infinite sequence of  L--S variational patterns
exists \cite{AEGnegI}.
Moreover, similar matching/gluing
of patterns 
can produce other
countable pattern families.

\smallskip    
    
{\sc ODEs with more general nonlinear operators.}    
 Let us describe a wider area of applications for the matching/gluing approach shown in this paper.    
  It is crucial for us that the canonical model \ef{N1.1}  has 
   a typical necessary {\sc indefinite} (non-odd or non-even) {\sc non-coercive} (such that 
 arbitrarily large solutions in  $L^2(\re)$ can exist)
{\sc quadratic} (i.e., a polynomial  of the smallest degree) operator. Using first \ef{N1.1}, we  extend the results (see Section\;\ref{SectExp4}) 
to $2m$th-order  equations with various nonlinearities:
$$
\begin{matrix}
D_x^{4} \mapsto (-1)^m D_x^{2m} \mapsto (-1)^m \D^m,\,\, m=2,3, \,\,\, \mbox{with}
\,\,\,G(F)=-F+ F^2{\rm e}^{F-1},
\\
G(F)=-F+ F^3, \,\,\, G(F)=-F+F^{21}\,\,\, (\mbox{a ``Black Jack equation"}), 
\end{matrix}
$$
and proper others, with an additional application to the 
study of six-order equations in Section \ref{SSix}.
For odd nonlinearities $G(F)$  the results apply to ``positive dominant" solutions, since there exist principally others essentially sign-changing L--S ones. 
Countable subsets of patterns
can be also observed for many other  ODEs as  1D stationary models of Cahn--Hilliard (C--H) type such as
 \be 
 \label{Pol34}
 F^{(4)}  =G(F) \equiv U(F) + (V(F))'+ (W(F)'' \,\,\,(\mbox{and $D_x^4 \mapsto -D_x^6 \mapsto D_x^8$...})
  \ee
  with polynomial (or similar) functions $U, \, V\,, W$
  regardless of odd/even, variational or Hamiltonian properties of the operators.
 The Fr\'echet derivative at $F=0$, 
  $$
  L_0=D_x^4- G_F'(0)I\equiv D_x^4 -U_F'(0)I- (V_F'(0)I)'-
 (W_F'(0)I)''
  $$
   is then assumed to have
  the defect indices $(2,2)$ in $\re_\pm$ (the origin $x=0$ is regular), that characterize the number of  $L^2(\re_\pm)$-solutions
  of
    $L_0 \psi= \pm {\rm i} \, \psi$. The defect index of $L_0$ in $\re$ is $(0,0)$.
         For the
     DSs in $\re^4$  this means that the origin O is a proper ``saddle-node"
      \cite{Champ94}.
  For polynomial nonlinearities in (\ref{Pol34}) of degree  3 and more, 
   there appear more equilibria, more key periodic orbits, complicated homoclinics, chaotic orbits, and  ``multi-wing" attractors
   $W^{k,\infty}$.

 From the point of view of the nonlinear elliptic theory,
 the analyticity or even any $C^l$-smoothness in (\ref{Pol34}) is not required. As a simple example, in Section\;\ref{SMod}, we consider
 \be
 \label{Mod33}
  F^{(4)}= -F - (|F|F-F)'' \,\,\,\mbox{in} \,\,\, \re, \quad F(\iy)=0,
   \ee 
where $|F|F$ is $C^{1,0}$ (the derivative is Lipschitz). Another example with $G \in  C^{0,1}$ is the following piece-wise linear (P-L) ``approximation" of \ef{N1.1} with a   Lipschitz nonlinearity \cite{AEGII}: 
\begin{equation}
  \label{PL111}
 \tex{
  F^{(4)}=G(F) \equiv |F-\frac 12| - \frac 12=
 }
  \left\{
  \begin{matrix}
  F-1, \,\,F \ge \frac 12,
  \\
  -F, \,\, F \le  \frac 12. \quad \,\,
  \end{matrix}
 \right.
  \end{equation}
 Even a Lipschitz condition on $G$ is not necessary
as our third non-standard example shows:
 \be 
 \label{Hold1}
\tex{
F^{(4)}= G(F) \equiv  - \frac {F} {\sqrt{|F|}}  -(F^3-F)'',\,\, x \in \re, \quad F(\iy)=0,
}
 \ee  
 where 
the first H\"older continuous  term 
 $- \frac {F}{|\sqrt F|}$  replaces the linear one $-F$ in \ef{Mod33} and can do the same in the most  ODEs above.
 Here we need a comment:
(\ref{Hold1})  admits compactly supported weak (being linear functionals) solutions $F \in C_0(\re)$ with bounded {\rm supp}$F=(x_0,x_1) \subset \re$. Oscillatory properties 
of solutions near the end points $x_{1,2}$ of the support are known \cite{EGK1, EGK2}, \cite[\S~1.4]{GMPBook}, and this allows us to generate an infinite number of patterns which are homoclinics on $(x_0,x_1)$ for each pattern.

 Curiously, this gives  quite a surprising conclusion: (\ref{Hold1}) {\sc  always} admits an {\sc uncountable} and moreover infinite-dimensional subset  of patterns in $\re$.
Here the theory of analytic or sufficiently  smooth 
equations, as  admitting only isolated ($x$-translations forbidden) solutions, stops. 
  Indeed having two  patterns $F_{1,2}$ (or as many as we want) with non-overlapping supports {\rm supp}$F_{1,2}$, the
   second 
   can be arbitrarily  moved in $x$ (the first one  stays fixed since the general $x$-transfer is prohibited) without overlapping of the supports to create new solutions.
  Of course, this is a striking property of the H\"older continuity only (the nonlinearity is not Lipschitz). We present such exotic examples in order to show that the class of  nonlinear ODEs and elliptic PDEs admitting infinite subsets of patterns is extremely wide and this does not depend on assumptions of  being Hamiltonian, gradient, variational, etc. Note also that such examples are not that exotic for reaction-diffusion-absorption, C--H and thin film applications, see further references and examples in \cite[Ch.~1]{GMPBook}.

We would like now to point out that there different ODEs for which we are able to perform a comparison of the first pattern $F_0(x)$
in order to show that these ODEs actually are not only similar but are very close metrically. 
As an example, we show that such pattern phenomena are quite common
and briefly  describe
first positive dominant patterns for three different ODEs. In particular, we might consider the ODEs
 \be
 \label{3ODE}
 \tex{
  F^{(4)}=G_1(F) \equiv -F+F^2, \,\,\, F^{(4)}=G_2(F) \equiv -F+F^3, \,\,\,  F^{(4)}=G_0(f) \equiv |F-\frac 12| - \frac 12.
 }
 \end{equation}  
 The last one is the P-L equation 
\ef{PL111} which serves here as  
 a suitable P-L ``approximation" \cite{AEGII} of the first two ones
   for positive dominant patterns, i.e., those admitting only  
   small negative values 
   due to oscillatory exponential tails.
Graphs of  three nonlinearities in \ef{3ODE} 
are given 
in Fig. \ref{FPL0}, while close to each other  first main 
patterns $F_0$ are presented in Fig. \ref{3ODEFig}.
It turns out that other more complicated positive dominated patterns are also close to each other.
Therefore, these 
nonlinear  equations are expected to belong to the same ``homotopy class", i.e., their pairs of solutions can
be transformed to each other by
 a non-degenerate analytic $\e$-deformation.
  It is crucial that the P-L problem  \ef{PL111} admits a technically doable algebraic classification  of all the patterns \cite{AEGII}, which then can be attributed  to other  equations from the same class. 
  In particular, this can reveal 
the actual true sense  of an irrational ``$\sigma$-index" which can be attributed to any pattern $F_\sigma$ from their countable subset and which, in fact, describes its geometric shape and nature.
In the most general and complicated cases, it cannot be replaced by any
finite ordered collections of integers.

\begin{figure}[htbp]
 \hfill   \hfill  \begin{minipage}[t]{0.45\textwidth} 
    \centering 
    \includegraphics[width=6cm,keepaspectratio]{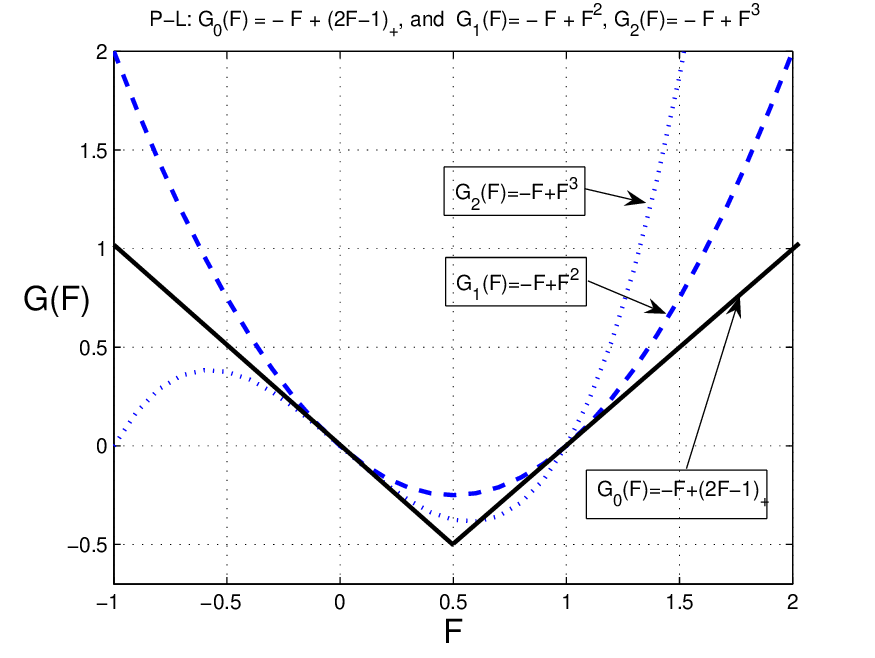}
    \caption{Comparison of nonlinearities in \ef{3ODE}.}
\label{FPL0}
\end{minipage}
  \hfill   \hfill 
\begin{minipage}[t]{0.5\textwidth}
    \centering 
    \includegraphics[width=6cm,keepaspectratio]{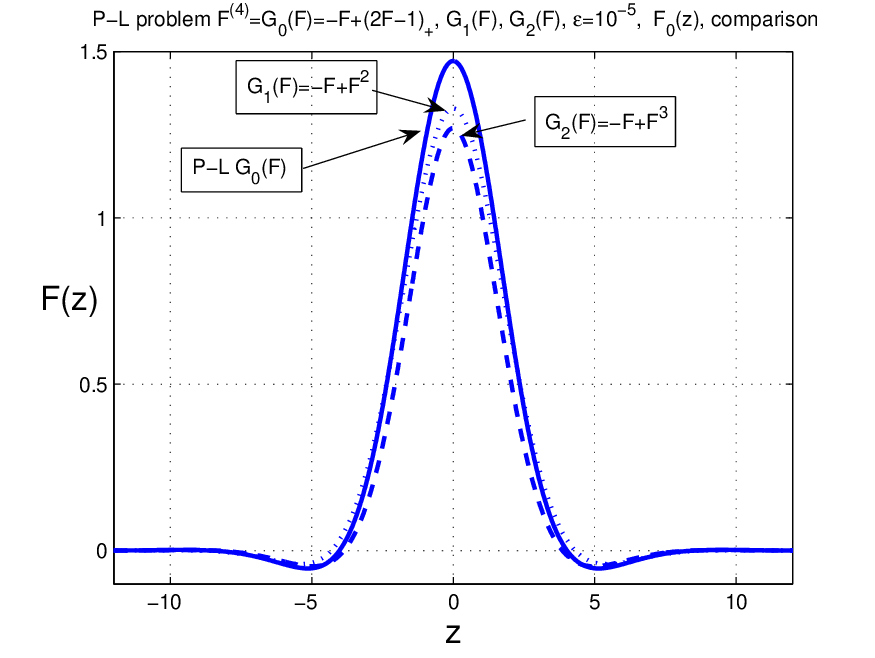}
    \caption{First pattern  $F_0(z)$ of the three
ODEs in \ef{3ODE}.}
\label{3ODEFig}
\end{minipage}
\end{figure}


Other more nonlinear ODEs having  {\em algebraic}
 {\em oscillatory tails} of solutions at infinity are
 \be 
 \label{Fmod1}
 F^{(4)}=-|F|F +F^4, \quad F^{(4)}=- F^3 + F^4,\quad F^{(4)}=-F^3 + F^4 - (F^5)'', \,\,\, \mbox{\em etc.}.
 \ee
 It is shown that a nonlinear version of the ``nonlinear" deficiency indices $(2,2)$ of the operators 
 is valid (the origin O is a ``nonlinear saddle-node"), the algebraic tails are 2D-{manifold}, and this allows us to study 
 countable patterns subsets by a matching/gluing approach.

Further extensions  lead to the so-called $p$-Laplacian operators (a term  from the filtration theory of non-Newtonian liquids) and thin film equations (TFEs),
 \be 
 \label{TFE2}
 (|F''| F'')''=-F+F^2, \quad (|F'| F')'''=-F+F^4, \quad (|F| F''')'=-F-(F^3-F)'', \,\,\,\mbox{\em etc.}, 
   \ee 
which 
present  a good challenge and keep a similar variety of patterns. Such equations, which also can be formally associated with some C--H models with a non-constant concentration  mobility 
 admit compactly supported weak solutions with algebraic
{\sc oscillatory tails} at end support points and require a special consideration of their homoclinics, periodics, attractors, {\em etc.}
All the operators in \ef{TFE2} are not variational and ODEs are not Hamiltonian.
By using irregular and degenerate operators similar to those  in \ef{Hold1} and \ef{TFE2} we are able to compose  a shocking  ``triple-troubled" quasilinear degenerate ODE which is  however variational in $W^{3,2} \cap W^{4,1}$
(embeddings are neglected):
 $$
 \begin{matrix}
 (|F''|F'')''= - \frac F{\sqrt{|F|}} - [3(F')^2-1]F'', \quad \mbox{with the functional}
 \\
 \Phi(F)= \frac 13 \int|F''|^3 + \frac 23 \int|F|^{\frac 32} + \frac 12 \int (F')^2- \frac 14 \int (F')^4.
 \end{matrix}
 $$
 The functional is even (the differential  operator is odd) and admits  the L--S countable sequence of patterns. This is  also true in the elliptic setting (see \cite{AEGnegI} for similar L--S families of solutions)
  $$
  \tex{
  \D (|\D F|\D F)=- \frac {F}{\sqrt |F|} - \nabla \cdot (|\nabla F|^2 \nabla F - \nabla  F) \quad \mbox{in}
  \quad \ren, \quad F(\infty)=0,
  }
  $$
  and  a further study of patterns  is possible but is
 connected with technical difficulties.

\smallskip

{\sc Nonlinear elliptic PDEs.}
Thus, continuing such nonlinear operators and elliptic PDEs presentation,
we extend our approach to the elliptic problems with  $F^{(4)} \mapsto \Delta^2 F$ in $\ren$, i.e., when dealing with the $N$-dimensional version of  (\ref{N1.1}) and others:
 \be 
 \label{El10} 
 \Delta^2 F=G(F) \equiv  -F+F^2 \quad \mbox{in} \quad \ren, \quad F(\iy)=0
 \quad(\mbox{or} \,\,\,-\Delta^3 F= -F+F^2).
 \ee
  We show that the first  pattern $F_0=F_0(|x|)$ is radially symmetric and is governed by an ODE
 with $\D \mapsto \Delta_r$, which also generates  
  some patterns $F_{2l}(|x|)$ with even indices.
  The same is true for $F_0(|x|)$ for the cubic equation
   $
   \Delta^2 F= - F+F^3$ or $-\Delta^3 F=-F+F^3$,
 where the next $F_1(x)$ and similar others  belonging to the L--S family require a true elliptic setting, {\em etc.}
 For various and related approaches to patterns of the 4th-order Swift--Hohenberg  elliptic equations and others, see \cite{Ll08, LS09} and references therein.
 Therefore, some basic patterns are shown to be  radially symmetric and  can be easily constructed, while
others assume a delicate matching of solutions of linear elliptic problems,  
\cite{AEGII}. Indeed, for such problems for $x \in \ren$,
  any construction of  complicated patterns
lead to difficult  problems even for a P-L approximation of \ef{El10}
of the same form as in 1D, \ef{PL111},
 $$
 \tex{
 \Delta ^2 F= |F-\frac 12|-\frac 12 \quad \mbox{in} \quad \ren, \quad F(\infty)=0.
  }
 $$
Concerning known facts and recent results for higher-order
nonlinear ODEs and related elliptic problems obtained using nonlinear operator techniques,
we refer to a survey on some mathematics results and more applied
aspects in \cite{AEGnegI} and \cite[Ch.~1]{GMPBook}, containing further and new key references on the subject, which equally can be added  to the DSs results and papers/books mentioned above.

\smallskip

{\sc On connections with dynamical system (DS) theory.}
Some aspects of the corresponding  DS theory will be partially  used at the end of the paper,  
In general, for DSs in $\re^4$ like \ef{N1.1} or related ones, from PDE applications  with quadratic and polynomial algebraic, or differential  nonlinearities and especially for the Hamiltonian ones,
there exists a vast literature containing   detailed explanations concerning possible patterns
as homoclinics of the origin in $\re^4$; see e.g. \cite{Champ94} (see more details below) where similar models are analysed
from the point of view of Hamiltonian DSs.

Various related important results for similar and close   Hamiltonian ODEs were obtained in a number of papers. We refer to  \cite{BCT96, Champ94, Champ98, C99, ChK04, Yag21} where a detailed description of patterns can be found.   
We use a DS representation of the obtained results by explaining 
 a general  structure of the two-wing unstable blow-up attractor named $W^{2,\infty}$ containing an infinite number of periodic and chaotic orbits and closely attracting 
a countable number of homoclinics of O in $\re^4$.  An uncountable number of local ``semi-homoclinics" (at $x=-\infty$ only) then blow-up in finite time $x$. 
We also present later on examples of  formal expansions to reveal $F_0$
via some transcendent algebraic systems which remind construction of patterns for the P-L approximation of ODEs.

We do not  present here any quality review/survey or a description of such deep and important results, papers, books and refer to some
 earlier ones, and to the papers mentioned  above we refer to  \cite{Champ94, ChK04, Han20, Hart98, Hoy06, Ll08, LS09, OvShil87, PelRod04, PelTroy, Pis06, Shil65, Shil70} with further important references which can be traced out by the 
{\tt MathSciNet}.

In particular, the papers mentioned above, \cite{BCT96, Champ94, Champ98, Ll08}, and the books \cite{Hoy06, PelTroy, Pis06} contain a vast survey on earlier research in the DS
and various PDE applications of pattern formations  areas.
As an excuse,  we note again that our study 
is  more oriented to the nonlinear PDEs theory  and our techniques are firstly  applied to \ef{N1.1}
as the simplest example to demonstrate the approach. As we have mentioned above, many indefinite operators  with non-Hamiltonian and non-variational features could serve  similarly. We are also interested in applications to various patterns driven by nonlinear elliptic equations in $\ren$ (some references above describe many results in this area).

\smallskip

\ssk

To avoid possible  future and somehow reasonable criticism we claim that in what follows:
 $$
  \begin{matrix}
  \mbox{\em we study some well-known Hamiltonian ODEs from the point of view of the }
    \\
    \mbox{\em nonlinear elliptic and variational theory  
 to be filially applied to PDEs in $\ren$,}  
\end{matrix}
$$
which cannot be covered by the current DS theory.

\section{Exponentially decaying patterns in $\re$: 2D stable and unstable
 asymptotic {manifolds} and some conclusions}
 \label{SexpanA}


Here we need to perform carefully and in full details an elementary asymptotic analysis since it will be crucial in what follows.

\subsection{First asymptotic analysis}

Thus, those 
simple asymptotic features of \ef{N1.1}  are crucial for performing the matching/gluing arguments used throughout  the construction of our families of solutions. 
As we mentioned, earlier, in \cite{AEGnegI},    related
(sometimes radial) versions of elliptic operators with odd nonlinearities in $\ren$ were treated.
 However,  unlike
\cite{AEGnegI}, where L--S and fibering variational approaches were in
charge due to the oddity of {nonlinearities} (functionals are even), here, firstly in the ODE setting,
 we have to  use other approaches.

We begin with the  exponential behaviour of all possible solutions
of \ef{N1.1} as $x \to \iy$. This has been done
in \cite{AEGnegI} for the radial geometry in $\ren$. The 1D exponential
behaviour is similar, but is more involved. Indeed,
in the standard linearized setting, keeping the leading terms in
\ef{N1.1}, we have a linear equation
 \begin{equation}
 \label{inf1}
  \tex{
  F^{(4)}=-F + O(F^2) \asA x \to \iy.
 }
 \end{equation}
Next, as usual, calculating the admissible decaying asymptotics
from \ef{inf1}, as a first approximation (sufficient for our
purposes), we use an exponential pattern leading to the
following characteristic equation: as $x \to \iy$,
 \begin{equation}
 \label{inf111}
  \tex{
 F(x)=\eee^{a x}+... \LongA a^4 +1=0 \LongA a=\pm {\mathrm i} \LongA a^2= \pm \mu  \pm
 {\mathrm i}\,\mu, \,\,\,\mu = \frac 1{\sqrt 2}.
 }
 \end{equation}
 This
 yields a {\em two-dimensional} exponential manifold in both limits $x \to \pm \iy$:
 \begin{equation}
 \label{inf1N}
   \left\{
    \begin{matrix}
 F(x)= {\mathrm e}^{-\mu x} \big[
C_1 \cos(\mu x)+ C_2 \sin(\mu x)\big]+ O({\rm e}^{-2 \mu x})\, , \,\,\, x \to +\iy, \ssk
\\
F(x)= {\mathrm e}^{\mu x} \big[ \hat C_1 \cos(\mu x)+  \hat C_2 \sin(\mu x)\big]+O({\rm e}^{2 \mu x}), \,\,\, x \to -\iy,
\quad 
 \end{matrix}
 \right.
 \end{equation}
 where $C_{i}$ and $\hat C_{i}$, with $i=1,2$, are arbitrary constants.
Note that, for our solutions, the pairs of those four parameters
can be different as $x \to \pm \iy$:
 \begin{equation}
 \label{NotEq1}
 \mbox{$C_{1,2}$ and $\hat C_{1,2}$, in general, do not
 coincide,}
 \end{equation}
  i.e., we are looking for principally non-symmetric 
  patterns, for which $C_{i}=-\hat C_{i}$, with $i=1,2$.
 This is a principal difference with the results in \cite{AEGnegI}
 achieved in the radial geometry, which, obviously, does not allow
 such a variety of patterns in $\ren$.

 A full
 asymptotic expansion   as $x \to \pm \iy$  is  then
 given by the fundamental system of solutions of the linear operator $D_x^4 +I$:
 \begin{equation}
 \label{full1}
 \tex{
  F(x)= {\mathrm e}^{-\mu x} \big[
C_1 \cos(\mu x)+ C_2 \sin(\mu x)\big]  
+  {\mathrm e}^{\mu x} \big[ C_3 \cos(\mu x)+ C_4 \sin(\mu x)\big] +... \,.
 }
  \end{equation}
Hence, for proper exponentially decaying at infinity solutions $F(x)$, one needs:
  \begin{equation}
  \label{full2}
  C_3=C_4=0 \,\, (\mbox{at} \,\, x = +\iy) \,\, \mbox{and} \,\,
 C_1=C_2=0 \,\, (\mbox{at} \,\, x = -\iy).
  \end{equation}

\subsection{Important consequences of the linearized analysis}
Let us begin with  simple  preliminary facts. In the class of even
 functions,
 any regular bounded solution of \ef{N1.1} must satisfy {\em
two} boundary conditions at the origin
 \begin{equation}
   \label{bc1}
   F'(0)=F'''(0)=0.
  \end{equation}
   Thus, using a standard shooting strategy from, say, $x = -\iy$,
    algebraically, at least two parameters are needed to
    satisfy both \ef{bc1}.
Looking again at \ef{inf1N}, where there exist {\em two}
parameters $C_{1,2} \in \re$ (then $\hat C_{1,2}$ are the same by
symmetry), we observe that
 matching with two symmetric boundary conditions \ef{bc1} (for even profiles) yields a
 well-posed and well-balanced algebraic ``2D--2D shooting problem".

Actually, it is a 1D problem since we can write \ef{inf1N} as a
one-parametric manifold:
 \begin{equation}
 \label{OneP1}
 \tex{ 
 F(x)= F_B(x)+O({\rm e}^{2 \mu x})  \,\,\, \hbox{as}\quad x \to -\iy,\,\,\,\mbox{with} 
  \,\,\, F_B(x)=B{\mathrm e}^{\mu x}
 \cos(\mu x), \,\,\, B \in \re,
  }
  \end{equation}
 where the second parameter is 
an arbitrary translation by $x_0 \in \re$. 
  Therefore, shooting by $B$ only, we will get, if possible,  the conditions
\ef{bc1} but at some point $x_0 \in \re$, and then the translated
and after that reflected pattern $F(x+x_0)$ will be a proper even
solution.  In the general case, we have:

 \begin{proposition}
 \label{TwoFund}
There hold:
 \be
 \label{Inv1}
 \begin{matrix}
 \mbox{{\rm (i)} Each pattern $F$ with a given $B$ is an isolated solution in $C(\re)$, and}
 \\
 \mbox{{\rm (ii)} for any constant $B_0>0$, all patterns $\{F\}$ are described by some},\,\,  B \in [B_0,B_0 {\rm e}^{2 \pi}). \,\,\,
 \end{matrix}
  \ee 
  \end{proposition}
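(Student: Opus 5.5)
The plan is to work entirely with the one-parameter description \ef{OneP1} of the unstable 2D manifold at $x=-\iy$, and to exploit the translation invariance of \ef{N1.1}. The first step is to make \ef{OneP1} precise. Write the general linearized tail \ef{inf1N} at $-\iy$ as $\hat C_1\cos(\mu x)+\hat C_2\sin(\mu x)=A\cos(\mu(x-\theta))$, with $A\ge0$. Then a shift $x\mapsto x+x_0$ changes the phase $\theta$ and multiplies the amplitude by ${\rm e}^{\mu x_0}$. The nonlinear solution is uniquely determined by its leading tail: the stable/unstable manifold theorem applies since the origin is hyperbolic, with eigenvalues $\pm\mu\pm{\rm i}\mu$ and no centre part, so the unstable manifold is a smooth 2D graph over the linear data $(\hat C_1,\hat C_2)$. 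Hence every orbit on it is, up to translation, $F_B$ with a single real $B$, and the solution $F(\cdot\,;B)$ depends smoothly on $B$.

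The second step proves (ii) by computing the action of the translation group on $B$. Shifting by $x_0=-2\pi/\mu=-2\sqrt2\,\pi$ leaves $\cos(\mu x)$ invariant and multiplies ${\rm e}^{\mu x}$ by ${\rm e}^{-2\pi}$. More precisely, $F(x+x_0;B)$ has tail $B{\rm e}^{-2\pi}{\rm e}^{\mu x}\cos(\mu x)$, so $F(\cdot\,;B{\rm e}^{-2\pi})$ and $F(\cdot\,;B)$ are translates by uniqueness. Using the half-period shift instead gives $B\mapsto -B{\rm e}^{-\pi}$, so negative $B$ are also covered. Iterating, every $B\neq0$ is equivalent modulo translations to a unique $B$ in the fundamental domain $[B_0,B_0{\rm e}^{2\pi})$, or in $(-B_0{\rm e}^{\pi},-B_0]$, which maps onto it. Since $B=0$ gives $F\equiv0$, all nontrivial patterns are parametrised by $B\in[B_0,B_0{\rm e}^{2\pi})$.

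The third step is (i), which says that for fixed $B$ the pattern is isolated in $C(\re)$, and which I expect to be the main obstacle. The reduction is to a transversality statement: the pattern is a zero of the 1D shooting map $B\mapsto(\text{coefficients } C_3,C_4 \text{ at } +\iy)$, or equivalently of the two conditions \ef{bc1} in the even case. The plan is to assume that the unstable and stable 2D manifolds in $\re^4$ intersect transversally along the homoclinic orbit. This is the generic situation. It fails only if the linearization $L=D_x^4+I-2F$ about $F$ has a kernel in $L^2$ beyond $F'$, and the extra element is excluded in the reversible (even) class. Under this assumption the only nearby solutions in the unstable manifold are translates, which in the parametrisation correspond to other $B$. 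With the $B$-label fixed, no other solution lies nearby in $C(\re)$. For analytic $G$ I would add the fallback argument that the shooting function is real analytic in $B$ on the compact interval $[B_0,B_0{\rm e}^{2\pi}]$ and not identically zero (e.g. it blows up for large $B$). Its zeros are therefore isolated, and only finitely many lie in the fundamental interval. This yields isolation, and indeed countability once all admissible intervals are enumerated.
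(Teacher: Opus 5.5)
Your step (ii) is the paper's argument: the period shift $x\mapsto x+2\pi/\mu$ sends $B\mapsto B\mathrm{e}^{2\pi}$. Your half-period shift $B\mapsto -B\mathrm{e}^{\pm\pi}$ correctly handles $B<0$, which the paper leaves implicit. For (i), the paper proves much less than you attempt. It only observes that once translations are frozen by the normalization \eqref{OneP1}, the solution is uniquely determined by $B$, via an analytic expansion in $B$ converging on $(-\infty,L]$. Your first step already contains exactly this, so under the paper's reading of ``isolated'' you were done there.

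Your third step aims at the stronger claim that no other pattern lies near $F$, and it has a genuine gap.

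\textbf{Transversality.} It is simply assumed, and as phrased it cannot hold. $W^u(O)$ and $W^s(O)$ both contain the orbit direction, so their tangent spaces can never span $\re^4$. Both also lie in the 3D level set $\{2F'F'''-(F'')^2-F^2(\frac23F-1)=0\}$ of the first integral \eqref{DS1}, so at best they are transverse inside that level set. The real hypothesis is nondegeneracy, $\ker(D_x^4+I-2F)=\mathrm{span}\{F'\}$ in $L^2$. You give no reason why an extra kernel element would be excluded in the even class.

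\textbf{The analytic fallback.} This fails for three reasons.
\begin{enumerate}
\item The map $B\mapsto(C_3^+,C_4^+)$ is not defined, let alone analytic, on the compact interval $[B_0,B_0\mathrm{e}^{2\pi}]$. For open sets of $B$ the orbit blows up at finite $x$ or never returns near $O$ (Proposition \ref{PrNum}). The map exists only on small neighbourhoods of homoclinic parameters.
\item Your witness ``it blows up for large $B$'' carries no information. By your own step (ii), the solution for any large $B$ is a translate of the solution for some $B$ in the fundamental interval. Blow-up for all large $B$ would therefore mean blow-up for every $B\neq0$.
\item The conclusion ``only finitely many zeros in the fundamental interval'' is false. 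By (ii), the infinitely many patterns the paper constructs (e.g.\ $F_0,F_1,F_2,\dots$ of $\mathcal{F}_1$, no two of which are translates) all have distinct parameters in $[B_0,B_0\mathrm{e}^{2\pi})$. These parameters must therefore accumulate in the closed interval. That is incompatible with an analytic, not-identically-zero shooting function there. The paper locates such accumulation points in $\mathcal{B}_{\rm per}\cup\mathcal{B}_{\rm chaot}$ (Proposition \ref{PrNum}(viii)), where no such function is defined.
\end{enumerate}

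What your argument can actually give is a local statement. Near a homoclinic parameter $B_*$, the defect from $W^s_{\rm loc}$ at a fixed large $x$ is analytic in $B$. So $B_*$ is isolated provided this defect is not identically zero, i.e.\ $W^u$ and $W^s$ do not coincide on an open piece. That nondegeneracy is exactly what you would still have to prove.
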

  
  \begin{proof}
  {\rm (i)} For the analytic ODE \ef{N1.1},
  any local or global (a pattern) solution $F(x)$  satisfying \ef{OneP1} (i.e., $x$-translations are not allowed)
is uniquely determined by a fixed constant $B$ therein since the solution  is uniquely given by an analytic expansion in $B$
 uniformly converging on  intervals $(-\iy,L]$ with $L=L(B) \ll -1$.  
 \be 
 \label{Inv19}
 \tex{
\mbox{\rm (ii)}\,\, \mbox{by translation:} \,\, F_B(x+\frac {2 \pi}\mu)= B{\rm e}^{2\pi} {\rm e}^{\mu x} \cos(\mu x) \equiv F_{B'}(x),
\,\,\,
B'=B{\rm e}^{2\pi} \quad (\mu = \frac 1{\sqrt  2}).
 }
\ee\end{proof}

Thus, in view again  of the translational invariance of the
ODE, this parameter $B\in \re$ in \ef{OneP1} controls {\bf all}
the patterns of the problem \ef{N1.1}. Indeed, once we get
 a nontrivial solution of \ef{N1.1}, we can move it along the $x$-axis
 in a such a manner that when its asymptotic exponential tail, as $x \to
 -\iy$, takes the form \ef{OneP1} with some $B \ne 0$, which 
{\em uniquely} defines  this profile. Once again, this is also true since
 the obtained $B$ uniquely defines all {\em four} constants
$C_{1,2,3,4}$ ($C_1=C_2=0$ by \ef{full2}) of the unique solution
of the {\em 4th-order} ODE in \ef{N1.1}.

 Since our flow is analytic,
such a 1D shooting by a single parameter $B$ (or by a similar
$D \in \re$ at $x=+\iy$) cannot provide us with more that a {\em 
countable set of patterns}, with a possible  concentration point
at infinity only, meaning, roughly speaking, that these eventually
can be either very large in $L^\iy$ or very wide in the $x$-axis. To
become very large seems not an option since the corresponding
non-stationary parabolic equation
 $$
 u_t=-u_{xxxx} - u + u^2 \inB \re \times \re_+
 $$
admits finite-time blow-up for large initial data; see \cite{GMPBook} and earlier blow-up results in  \cite{GVSur02} for further details. We will discuss
some peculiarities of such a blow-up in Section\;\ref{S2N}.

 Of course, for the same 2D-2D, 3D-3D (to be
considered) and more equal dimensions of manifolds to be
matched/glued  for such analytic flows, the result remains the
same.
 We will discuss this important issue in a
greater detail.

For odd solutions, which  do not exist for our non-definite
operators but will be used for some odd ones later on, the
anti-symmetry/dipole-like conditions are posed:
 \begin{equation}
 \label{bc1anti}
 F(0)=F''(0)=0,
 \end{equation}
  which also form an algebraically non-contradictory matching. Then here $\hat C_{1,2}=-C_{1,2}$,
  so again two parameters occur, and a 1D shooting via expansions of the type \eqref{OneP1}
  (plus the translational parameter) occurs.
  We already know from \cite{AEGnegI} that for operators with odd nonlinearities, in the variational sense, both boundary conditions
  \eqref{bc1} and \eqref{bc1anti} can produce critical points of the
  L--S type, and even a countable subset of those for non-coercive
  operators. However, beyond L--S solutions, the analysis performed in this paper show that there exist countable
  sequences of other patterns, regardless definite or indefinite
  operators are involved, and we show this fact for the indefinite one
  in \ef{N1.1}.

  Consequently, our main goal here is to show that such boundary conditions like \ef{bc1} (for even patterns) and \ef{OneP1} (for arbitrary ones) at infinity
  produce an infinite countable subset of other solutions or critical points of the functional. Recall again
   that the L--S theory does not apply here.
   We are going to
  detect  patterns  which do not exhibit any symmetry
  or anti-symmetry of their geometric shapes via a matching/gluing argument.

\section{Variational approach for the first pattern and some numerics}
\label{S2.LS}


\subsection{Pohozaev's  radial fibering and the first (basic) variational pattern $F_0$}

Variational approaches in various metrics  apply to a number of  equations  in $\re$ or $\ren$ mentioned above,
including all the Hamiltonian ones (usually these require $L^2$-metric only). 
 See  typical examples in  \cite{AEGnegI,PV} and  \cite[Ch.~1]{GMPBook}, where further references and results  in particular  related to the {\em Pohozaev's Fibering Method} (PFM) (proposed in \cite{Poh79} and developed  in \cite{Poh84}--\cite{Poh08})
to be applied can be found.

Concerning our working convenient  model
 \ef{N1.1}, it admits a simple variational setting in $L^2$ by finding critical values and points of
 the functional
\begin{equation}
 \label{F01}
  \tex{
 \Phi(F)= \frac 12 \int [(F'')^2 +F^2] - \frac 13 \int F^3\inB H^2(\re),  \quad 
 \,\,\, \Phi'(F)=F^{(4)}+F-F^2.
 }
 \end{equation}
Clearly there exists the trivial solution
 \be  
 \label{Tr1}
 F(x)=0, \quad \Phi'(0)=0, \quad  \mbox{and} \quad \Phi''(0)=(D_x^4+I-3 F^2I)\vert|_{F=0}= D_x^4+I>0,
 \ee
 so that the critical point $F=0$ is a (at this moment local) minimum of $\Phi(F)$.
 
 Looking for nontrivial  critical points, according to Pohozaev \cite{Poh79} 
 we consider \ef{F01}
on a  natural for \ef{F01} fibering  manifold 
\begin{equation}
 \label{F02}
 \tex{
 H_0= \big\{v \in H^2(\re): \,\,\int [(v'')^2 + v^2]=1 \big\}
 }
 \end{equation}
 (almost a ``unit sphere" in $H^2$).
Thus, as customary,  we first apply the {\em radial} PFM:
\begin{equation}
 \label{F03}
 F=r(v)v, \quad v \in H_0,
 \end{equation}
 where $r(v)\in \re\setminus \{0\}$ is a scalar function.
 This gives the functional of two variables $(r,v)$:
\begin{equation}
 \label{F04}
 G(r,v) = \Phi(r(v)v)), \quad v \in H_0.
  \end{equation}
  Then (see Pohozaev's Lectures \cite{Poh08}) in a rather general case of $C^1$-functional with some natural assumptions 
  for standard regular  functionals like \ef{F01} any conditional critical
  point of $G(r,v)$: 
\begin{equation}
   \label{F05}
   \left\{
   \begin{matrix}
   G'_r(r,v)=0,
   \\
    G'_v(r,v)=0,
    \end{matrix}
    \right.
     \end{equation}
  gives  a critical point \ef{F03} of $\Phi(F)$, $\Phi'(F)=0$ with $F=r(v)v$.
 Since on $H_0$ there holds
\begin{equation}
  \label{F06}
  \tex{
  G(r,v) \equiv \frac 12 \, r^2 -\frac 13 r^3 \int v^3, 
\quad r \ne 0,
 }
 \end{equation}
  from the first scalar equation in \ef{F05} we have ($r=0$ leads to \ef{Tr1})
  \begin{equation}
  \label{F07}
  \tex{
  G'_r(r,v)=r-r^2 \int v^3=0, \quad r \ne 0.
}
 \end{equation}
This defines a unique solution $r=r_0(v) \ne 0$  as a local maximum point of the function \ef{F06}:
\begin{equation}
  \label{F08}
  \tex{
r_0(v)= \big({\int v^3}\big)^{-1}.
 }
 \end{equation}
 The second equation in \ef{F05} then yields a standard variational
 problem of the form
 \begin{equation}
  \label{F09}
  \tex{
 \mbox{min/max}_{H_0} \, \Phi(r_0(v)v), \,\,\, \Phi(r_0(v)v)= \frac 16\, \big(\int v^3 \big)^{-2},
 \,\,\,\, \mbox{or} \,\,\,\, \mbox{max/min}_{H_0} \, \int v^3.
 }
 \end{equation}
 Since the last functional is odd, both max and min are related  by $v \mapsto
 -v$ giving some $\pm v_0$.

  For similar equations with 
  odd-order nonlinearities like 
  $$
 F^{(4)}=-F+F^3 \,\,\, \mbox{or} \,\,\, ((F')^2 F'')''=-F-(F^5)'' \,\,\,(\mbox{an exotic variational in $H^{-1}(\re)$})
 $$
  and others, this fibering approach
 establishes  a connection with the L--S category/genus theory
 \cite{Berger,Kras,Kras51,KrasZ}. 
 Unfortunately, in the present case with indefinite operators, the final fibering result \eqref{F09}
 principally does not allow such an effective L--S application to guarantee existence of an L--S countable sequence of critical points as usually can happen for suitable even smooth functionals; see \cite[Ch.~6]{Berger} for a full L--S critical point theory. Extra comments and   applications to similar elliptic equations in $\ren$ can be found in \cite{AEGnegI}.  
 
 \ssk

Thus, we arrive at   a direct minimax problem \ef{F09} and consider it here  as a simple but typical example of such calculus. For $N=1$ all the necessary embeddings are valid while in $\ren$ one needs to be in the subcritical Sobolev range in order to deal with classical solutions.
We have  $v,v',v'' \in
L^2(\re)$ ($v''$ is a distribution) on $H_0$, so that,
 we deal with $C^1$-smooth functions (actually, we deal with analytic  exponentially decaying
functions only and the critical points must be among them) and 
$v(x), \, v'(x) \to 0$ as $x \to \iy$.
 The Cauchy-Buniakovskii inequality yields
$$
\tex{
 \int (v')^2 = \int v' \, v'= - \int v v'' \le
 \big( \int v^2\big)^{\frac 12} \big(\int (v'')^2 \big)^{\frac
 12}.
 }
$$
Using  it again  yields
\begin{equation}
 \label{emb1}
 \begin{matrix}
 v^2(y) = \int_{-\iy}^y (v^2)'
  = 2 \int_{-\iy}^y v v'
 \le 2 \big( \int v^2\big)^{\frac 12}
\big( \int (v')^2\big)^{\frac 12}
 \le 2\big( \int v^2\big)^{\frac 34}\big( \int (v'')^2\big)^{\frac
 14}
  \\
\LongA  |v| \le  {\sqrt 2}\big( \int v^2\big)^{\frac 38}\big( \int
(v'')^2\big)^{\frac
 18}.
 \end{matrix}
 \end{equation}
Therefore,  by \ef{emb1} on $H_0$
$$
  \tex{
  \big| \int v^3 \big| = \big| \int v \, v^2  \big| \le
  {\sqrt 2}
\big(\int v^2 \big)^{\frac 38}\big(\int (v'')^2 \big)^{\frac 18}
\, \int v^2 = {\sqrt 2} \big(\int v^2 \big)^{\frac {11}8}\big(\int
(v'')^2 \big)^{\frac 18} \le {\sqrt 2}, }
 $$
 i.e. this functional is bounded  from above (and from below
 as being odd via $v \mapsto -v$). This defines the absolute
  maximum of the functional
\begin{equation}
 \label{F55}
  \tex{
  c_0= {\mbox{sup}}_{H_0} \, \int v^3 >0.
   }
 \end{equation}
 Hence, \ef{F09} gives  the first critical point $v_0 \in H_0$ such that 
 \be  
  \label{F91}
 \tex{
 c_0=\int v_0^3.
 }
 \ee
 For the
  functional in \ef{F09}, this means that
  $v_0$
  satisfies
\begin{equation}
   \label{F10}
   \tex{
  \mu(v_0^{(4)}+ v_0) - v_0^2=0 \,\,\,(\Longrightarrow \mu=\int v_0^3=c_0),
  }
 \end{equation}
where $\mu=\mu(v_0) \ne 0$ is the corresponding Lagrange's
multiplier (for the point $-v_0$, we have $\mu(-v_0)= -
\mu(v_0)$). Setting
\begin{equation}
  \label{F19}
  \tex{
 v_0 = \mu F_0, \quad v_0= \frac {F_0}
 {\sqrt{\int[(F_0'')^2+F_0^2]}}\in H_0,
}
 \end{equation}
 yields the first critical point $F_0 \ne 0$ of the functional \ef{F01}
 and hence a solution $F_0(x)$ of \ef{N1.1}.
It follows from \ef{F91} and \ef{F19} that 
the first
critical values $\Phi(r_0(v)v)$ and $\Phi(F)$  satisfy
\begin{equation}
 \label{F22}
 \begin{matrix}
 c_0= \int v_0^3 \equiv \frac{ \int
 v_0^3}{\{\int[(v_0'')^2+v_0^2]\}^{\frac 32}} \equiv 
  \frac{ \int
 F_0^3}{\{\int[(F_0'')^2+F_0^2]\}^{\frac 32}} \equiv \frac 1 {\sqrt{\int[(F_0'')^2+F_0^2]}}= \frac 1{\sqrt{\int F_0^3}}
 \\
 \mbox{and} \quad C_0 \equiv \Phi(F_0)=\Phi(\frac {v_0}{c_0})=  \frac 1{2c_0^2}-\frac {\int v_0^3}{3c_0^3}=
 \frac 1{6c_0^2}.
 \end{matrix}
 \end{equation}

\subsection{Numerical gluing of $F_0(x)$'s and other patterns}
As we have mentioned, various analytic and numerical examples of homoclinic orbits of several related Hamiltonian DSs (including sometimes directly \ef{N1.1}) can be found in \cite{BCT96}--\cite{ChK04} where further references and results can be found. We need to concentrate on some specific features of such nonlinear ODEs actually regardless there Hamiltonian nature.

In Fig. \ref{f7-1}, we begin with  the first (and the simplest)
variational solution of \ef{N1.1}. Here we use the {\tt MatLab}
{\em bvp4c} solver with typical
 $$
 \mbox{Tols}= 10^{-2} \quad \mbox{and} \quad  \mbox{Tols}= 10^{-3}
 $$
 or less for extra delicate and complicated combinations of possible simpler
  profiles. Note that solving \ef{N1.1} on a large interval $(x_0,x_1)$ we use the fact that
   $$
   \mbox{Dirichlet b.c.'s $F=F'=0$ at $x=x_{1,2}$ forbid  translational invariants of the ``discrete" ODE},
   $$
   so that numerical patterns $F(x)$ are  isolated, cf. Proposition \ref{TwoFund}.

\begin{figure}[htbp]
 \hfill   \hfill  \begin{minipage}[t]{0.45\textwidth} 
    \centering 
    \includegraphics[width=6cm,keepaspectratio]{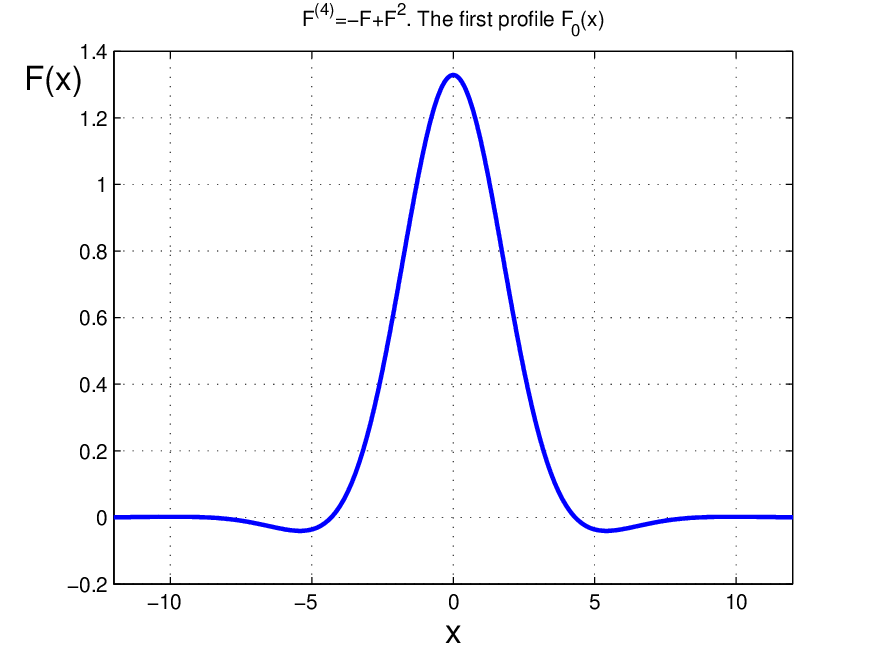}
    \caption{The first variational solution $F_0(x)$ of the ODE
problem \ef{N1.1}.}
\label{f7-1}
\end{minipage}
  \hfill   \hfill 
\begin{minipage}[t]{0.45\textwidth}
    \centering 
    \includegraphics[width=6cm,keepaspectratio]{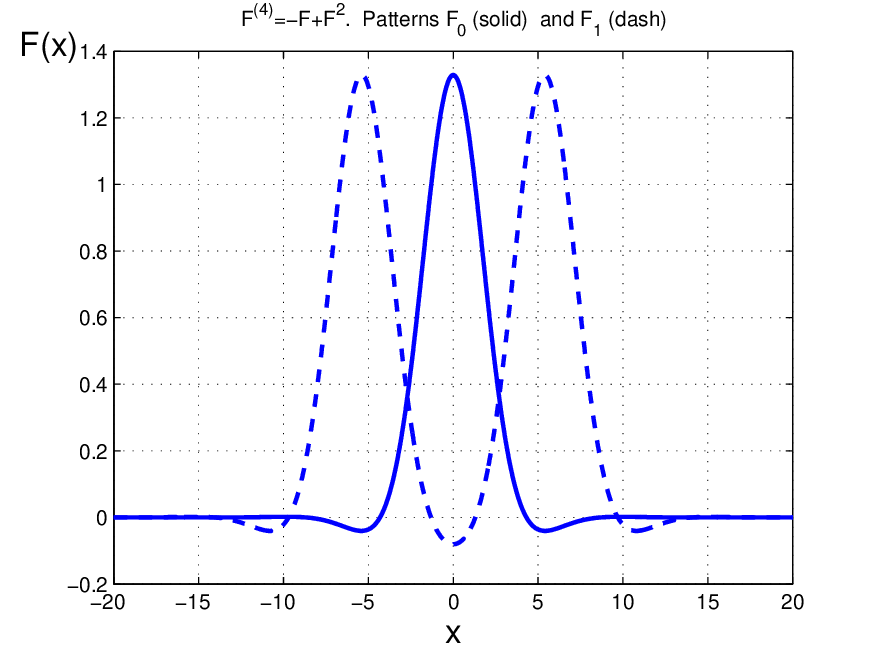}
    \caption{Two simplest  solutions $F_0$ and $F_1$  of the ODE
problem \ef{N1.1}.}
\label{f7F0F1}
\end{minipage}
\end{figure}

In Fig. \ref{f7F0F1}, we show just two basic
patterns $F_0$ and $F_1$ from an infinite countable family $\{F_k,
k=0,1,...\}$.

In Fig. \ref{f7-NN1}, we show how separated in space basic
profiles $F_0(x)$ can create more complicated patterns. A formal
procedure of such a gluing of an arbitrary pair and eventually of
an arbitrarily large finite number of isolated profiles will be
discussed below.

\begin{figure}[htbp]
 \hfill   \hfill  \begin{minipage}[t]{0.45\textwidth} 
    \centering 
    \includegraphics[width=6cm,keepaspectratio]{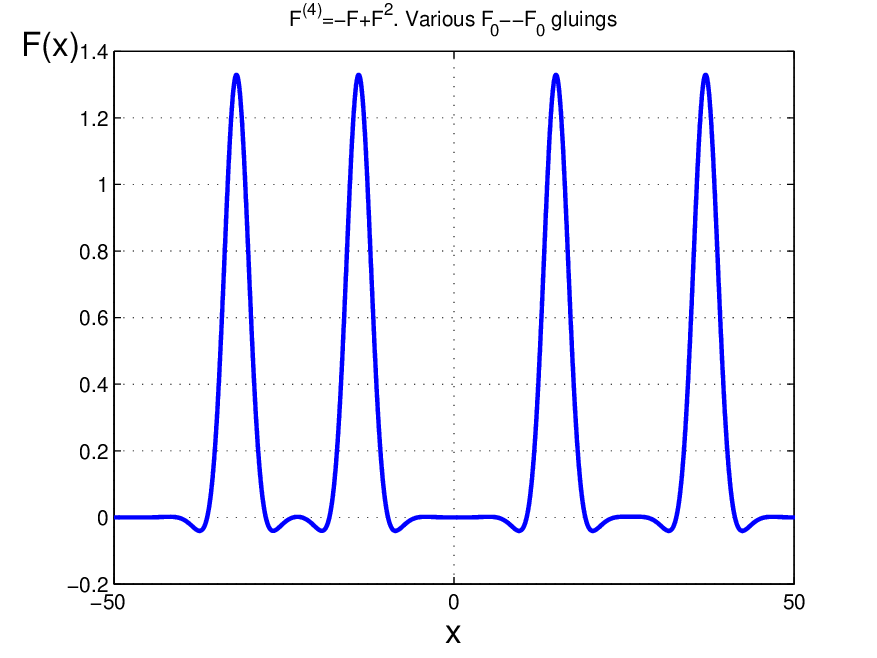}
    \caption{Various gluing together several basic profiles  $F_0(x)$
via exponential tails for the ODE problem
\ef{N1.1}.}
\label{f7-NN1}
\end{minipage}
  \hfill   \hfill 
\begin{minipage}[t]{0.45\textwidth}
    \centering 
    \includegraphics[width=6cm,keepaspectratio]{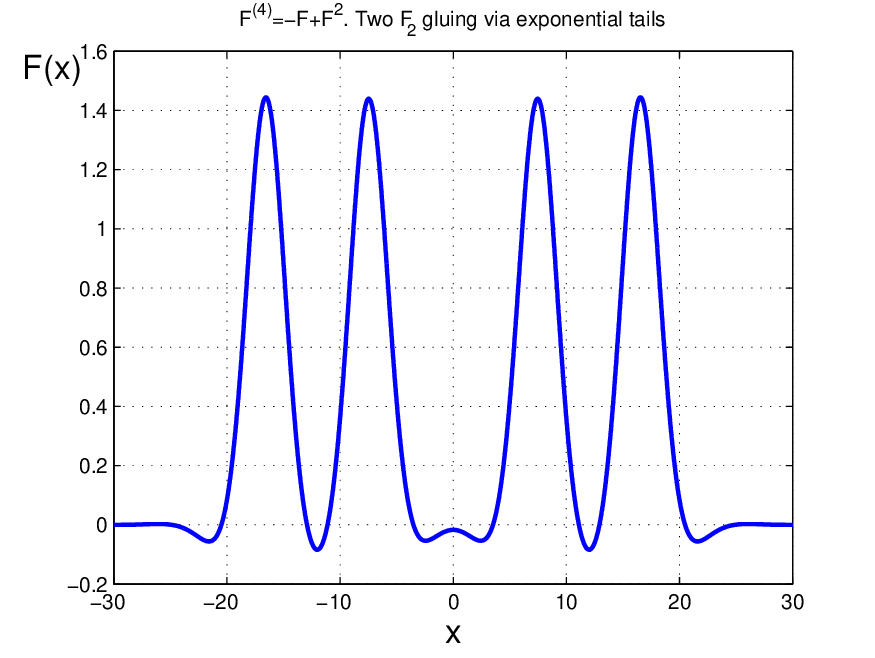}
    \caption{Two $F_1=F_0-F_0$ solutions of \ef{N1.1} matched at $x=0$
via asymptotic tails.}
\label{f7-2}
\end{minipage}
\end{figure}

{\sc The first basic family ${\mathcal F}_1=\{F_k\}$ via a periodic orbit $\Gamma_{\rm max}$.}
We next discuss more complicated solutions $F(x)$.
 Fig. \ref{f7-2}
shows two $F_2$ patterns obtained by the first gluing of the
elementary patterns $F_0-F_0$. In some sense, those double,
triple, {\em etc.,} patterns  $F_2$, $F_3$,...  look like a
standard L--S sequence of {\em minmax} critical points (of course, they are not). Recall
that for even functionals  those essentially  changing sign critical points are
constructed by using 
 the reflection $u \mapsto -u$ and characterize the
genus of each set which they belong to. Such critical points and
solutions are not available here for indefinite operators.
We will show that this basic family of patterns is organized as follows:
  \be 
  \label{max1}
  \begin{matrix}
{\mathcal F}_1= \{F_k\}_{\{k \ge 0\}} \,\,\, \mbox{is composed from  finite pieces of some periodic}
  \\
  \mbox{orbit $\Gamma_{\rm max}$ via gluing with exponentially decaying tails as $x \to \pm \iy$}.
  \end{matrix}
  \ee

\smallskip

{\sc The second basic family ${\mathcal F}_2=\{F_{+2l}\}$ via the periodic orbit $\Gamma_{\rm min}$.}
In Fig. \ref{f7-NN2}, we show the same first basic  profile
$F_0(x)$ (the solid line) and two new types of profiles called
$F_{+4}$ (the dash-line), which are matched at $x=0$. Those
$F_{+4}$ have exactly 4 intersections with the steady state of the
ODE \ef{N1.1} $
 F_*(x) \equiv 1.
  $
This second basic family of patterns  ${\mathcal F}_2=\{F_{+2l}\}$ is generated by another
 periodic orbit $\Gamma_{\rm min}$ in the same sense as in \ef{max1}.  
  
  \smallskip
  
  {\sc Further related patterns.}
According to our standard (simplified) classification, in Fig. \ref{f7-NN3}, we
observe a rather complicated profile of $F_{+4}-F_{+4}$ gluing,
which is characterized as
  \be
  \label{Fpr1}
 F_{+4,2,+4}.
  \ee
 The given 2 in between +4's  counts the number of zeros
 observed in between the two $F_{+4}$ profiles. Sometimes, we will use
 this number of zeros for measuring the number of minimum points around the
 matching area, when number of zeros is not sufficient to
 characterize the present ``geometry"
 of the pattern under consideration.

 In general, a classification of patterns via an index 
 like \ef{Fpr1} is not straightforward at all and moreover is not possible in such a simple way, so
 such indexes $\sigma$'  are used for convenience.
 For a correct way to ascribe indexes $\s$ to all the patterns, see 
  \cite{AEGII}.

\begin{figure}[htbp]
 \hfill   \hfill  \begin{minipage}[t]{0.45\textwidth} 
    \centering 
    \includegraphics[width=6cm,keepaspectratio]{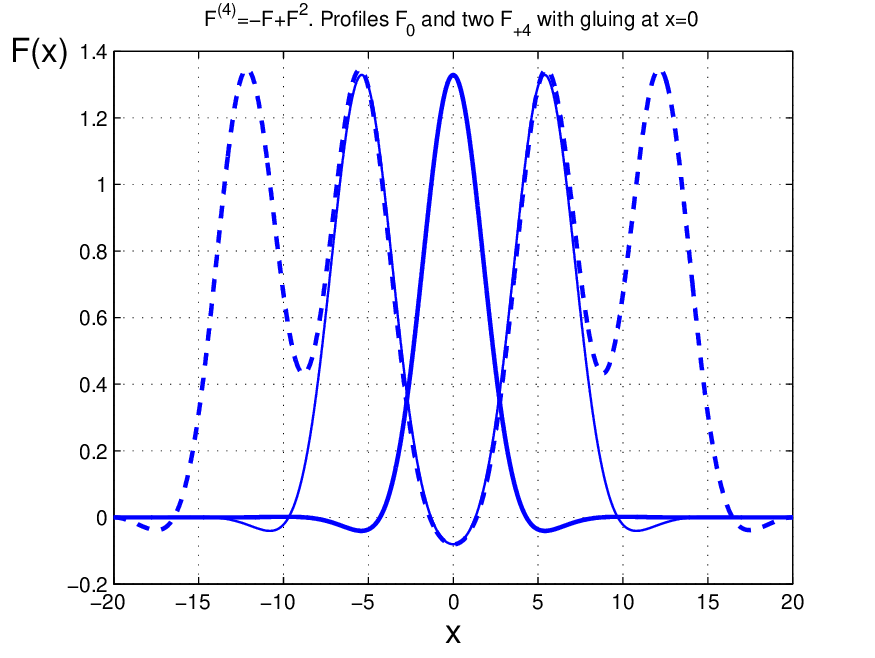}
    \caption{The first  $F_0(x)$ and two $F_{+4}$'s of the ODE problem
\ef{N1.1}.}
\label{f7-NN2}
\end{minipage}
  \hfill   \hfill 
\begin{minipage}[t]{0.45\textwidth}
    \centering 
    \includegraphics[width=6cm,keepaspectratio]{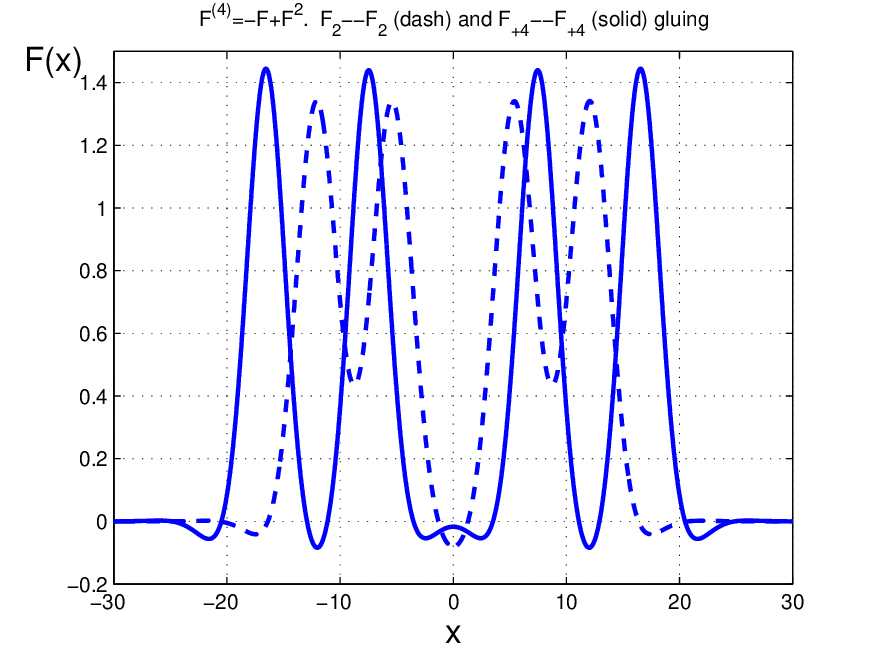}
    \caption{Gluing $F_2-F_2$ and $F_{+4}-F_{+4}$ of the ODE problem
\ef{N1.1}.}
\label{f7-NN3}
\end{minipage}
\end{figure}

In Fig. \ref{f7-NN4} and \ref{f7-NN5},  we present further
matching/gluing of $F_0$'s to create new ODE solutions. More
complicated solutions are presented in Fig. \ref{f7-NN4}, while
Fig. \ref{f7-NN5} shows enlarged zero and local minmax points of
such profiles.
 Observe that a clear evidence of the fact that matching/gluing
of various $F_0$, $F_1$ and other profiles happen pointwise
according to the period of decaying asymptotic tails of solutions
\ef{inf1N}, though the first gluing can be more nonlinear but
their discrete  nature is out of a reasonable discussion: matching
(gluing together) cannot occur at a continuum subset of points.

\begin{figure}[htbp]
 \hfill   \hfill  \begin{minipage}[t]{0.45\textwidth} 
    \centering 
    \includegraphics[width=6cm,keepaspectratio]{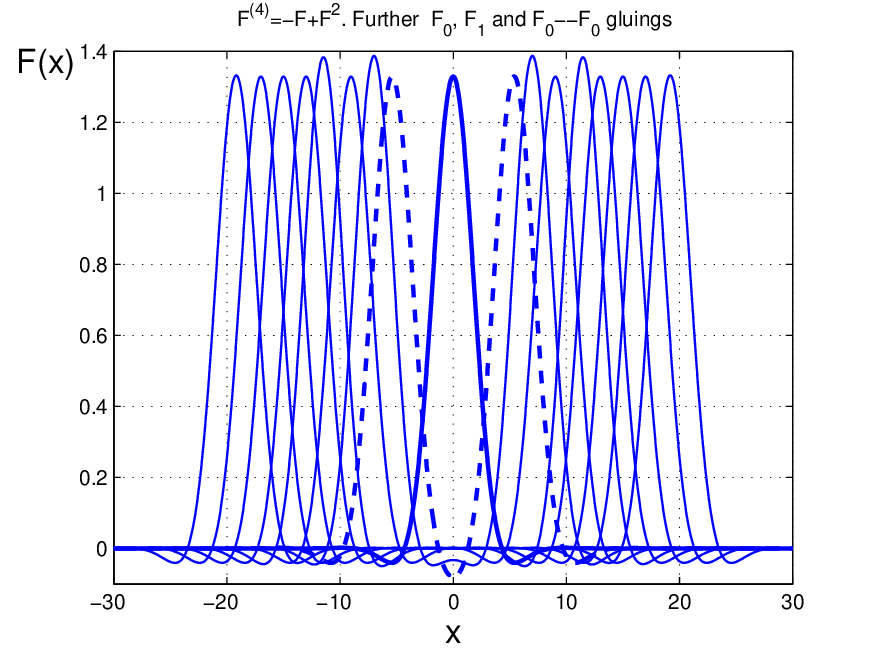}
   \caption{Complicated solutions  of the ODE problem
\ef{N1.1}.}\label{f7-NN4}
\end{minipage}
  \hfill   \hfill 
\begin{minipage}[t]{0.45\textwidth}
    \centering 
    \includegraphics[width=6cm,keepaspectratio]{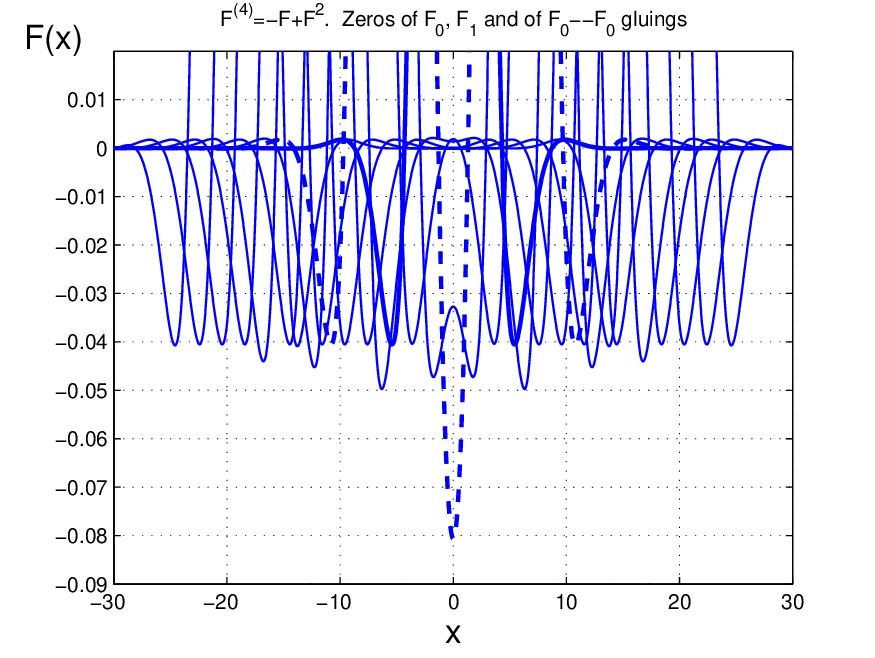}
   \caption{Enlarged zero structure of  solutions from Fig.
\ref{f7-NN4}.}\label{f7-NN5}
\end{minipage}
\end{figure}

\begin{figure}[htbp]
 \hfill   \hfill  \begin{minipage}[t]{0.45\textwidth} 
    \centering 
    \includegraphics[width=6cm,keepaspectratio]{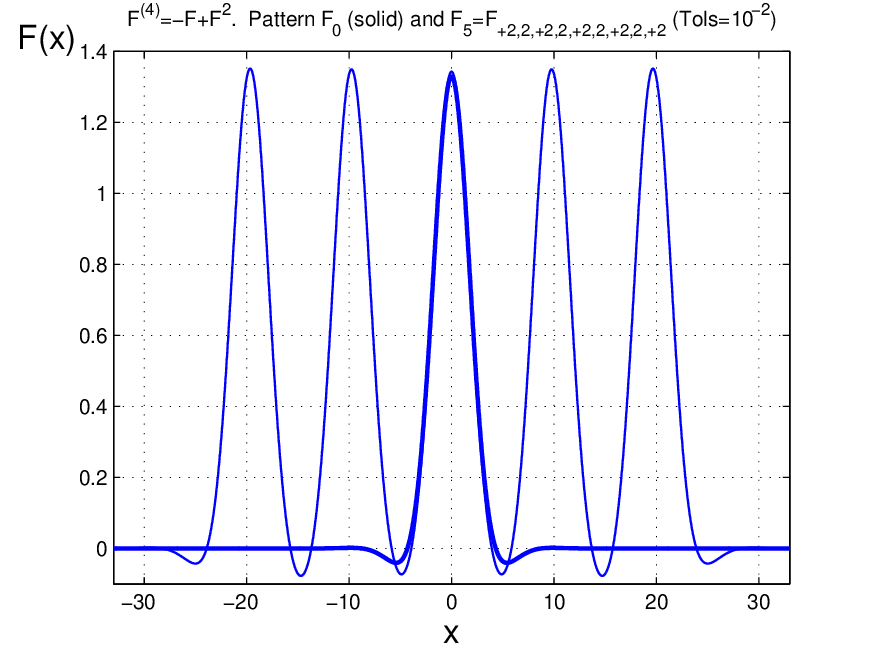}
  \caption{Pattern $F_5$.}\label{f7-NN21}
\end{minipage}
  \hfill   \hfill 
\begin{minipage}[t]{0.45\textwidth}
    \centering 
    \includegraphics[width=6cm,keepaspectratio]{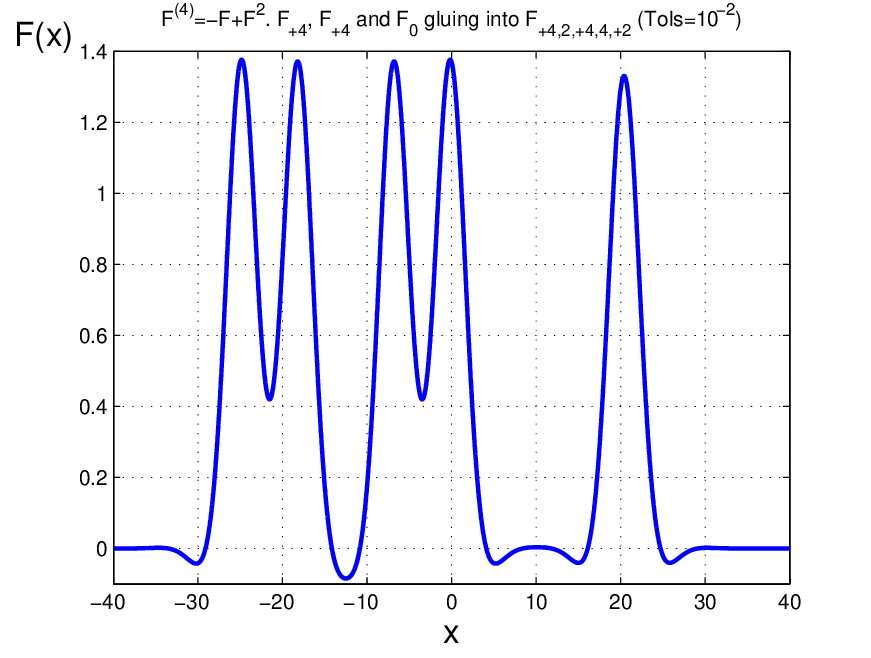}
  \caption{Matching two $F_{+4}$ and one  $F_0$.}\label{f7-NN22}
\end{minipage}
\end{figure}

Below, we present more Figures with typical properties of various
solutions of the ODE problem \ef{N1.1} obtained by gluing simpler
patterns.

\begin{figure}[htbp]
\begin{center}
\subfigure[Three $F_{0}(x)$, one $F_{+4}(x)$]{
\includegraphics[scale=0.45]{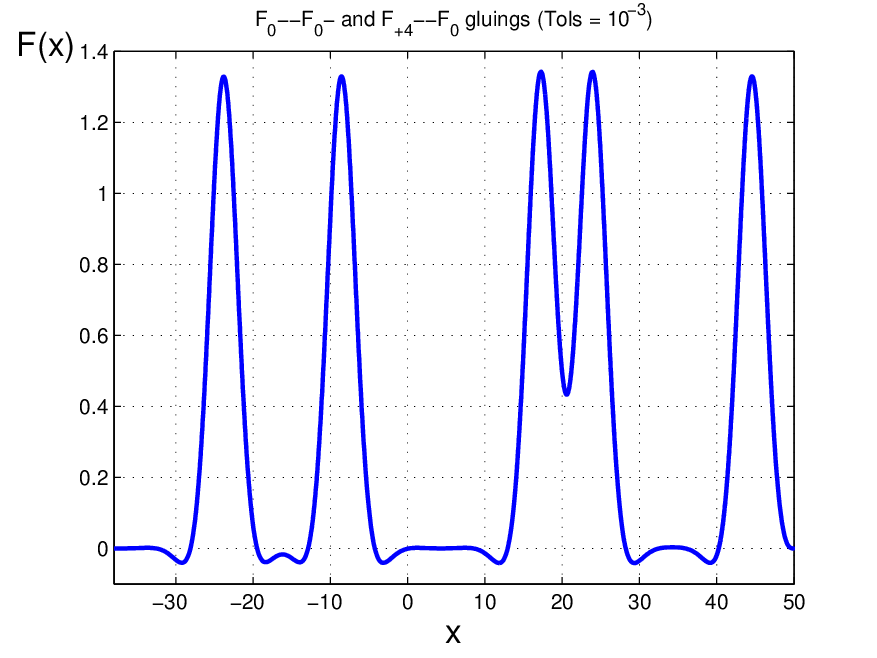} 
}
\subfigure[Zero set]{
\includegraphics[scale=0.45]{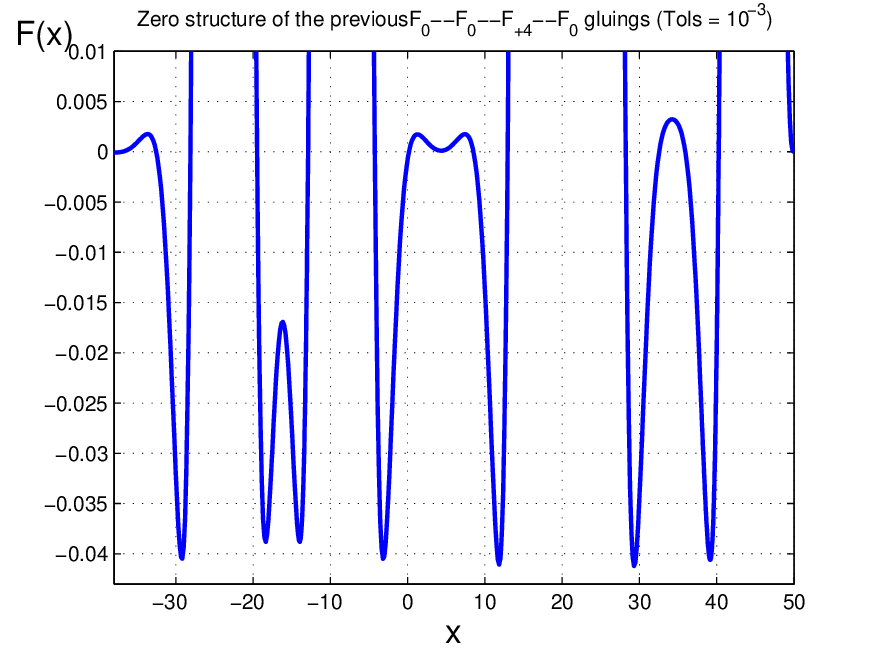}
}
\caption{Three $F_0$ and one $F_{+4}$ gluing together for  \ef{N1.1}
with zero-set.}
 \label{f4-NN7}
\end{center}
\end{figure}

\begin{figure}[htbp]
\begin{center}
\subfigure[Patterns  $F_{+6}(x)$, $F_{+8}(x)$]{
\includegraphics[scale=0.45]{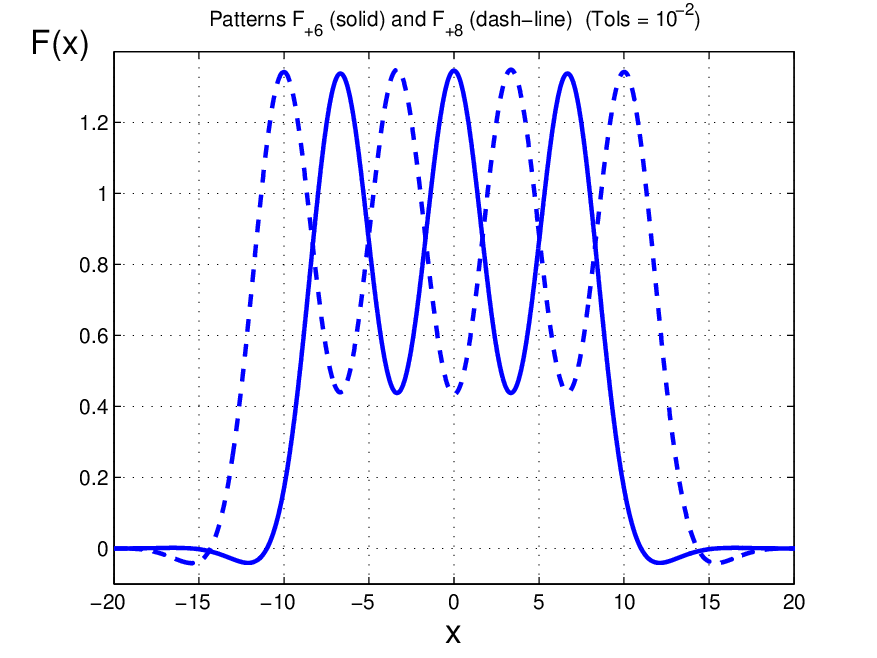} 
} \subfigure[Other patterns]{
\includegraphics[scale=0.45]{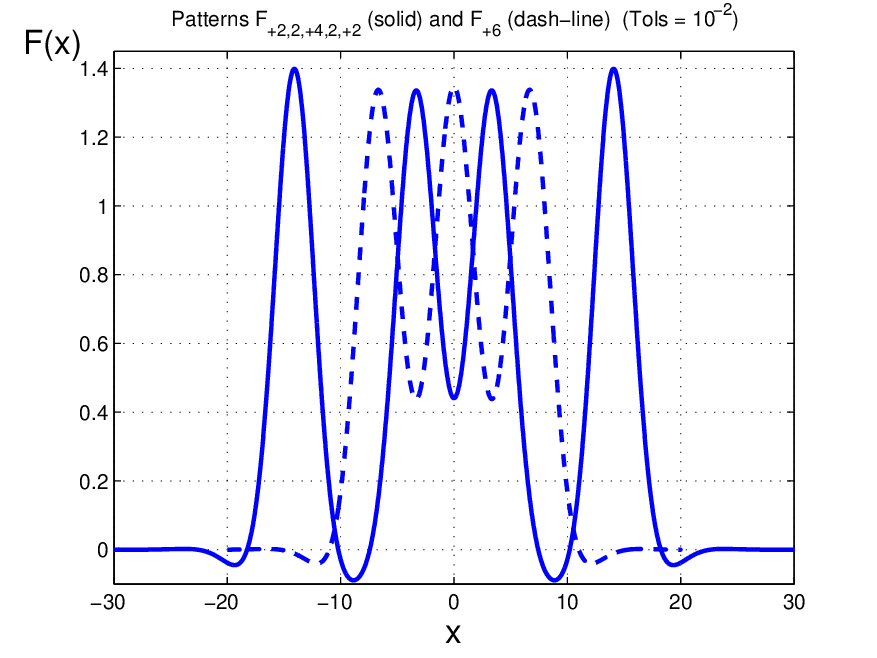}
}
\caption{Patterns $F_{+6}$, $F_{+8}$, and others.}
\label{f4-NN8}
\end{center}
\end{figure}

\begin{figure}[htbp]
\begin{center}
\subfigure[Transition $3F_0\to F_3\to F_{+6}$]{ 
\includegraphics[scale=0.45]{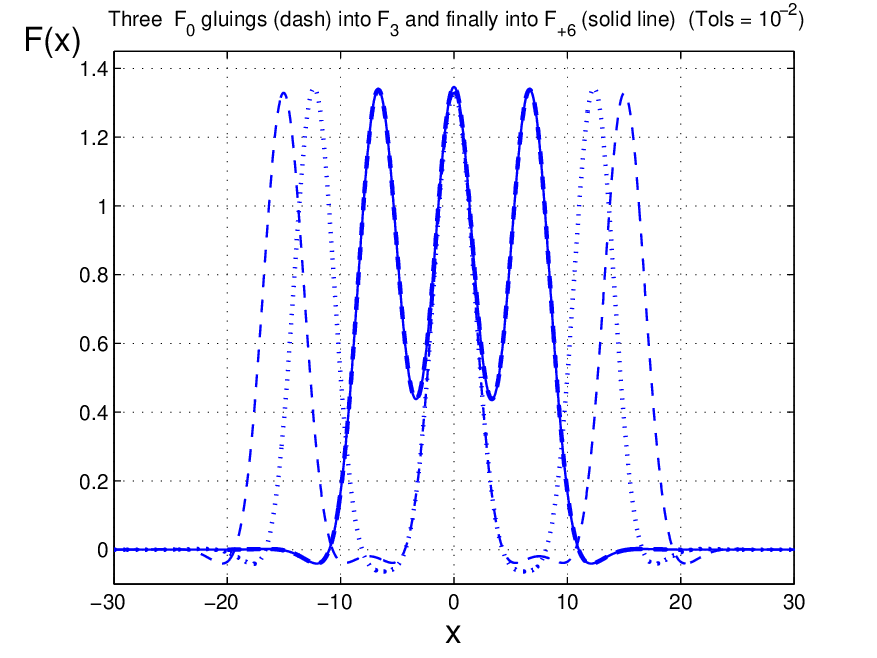} 
} \subfigure[$F_{+12}(x)$]{  
\includegraphics[scale=0.45]{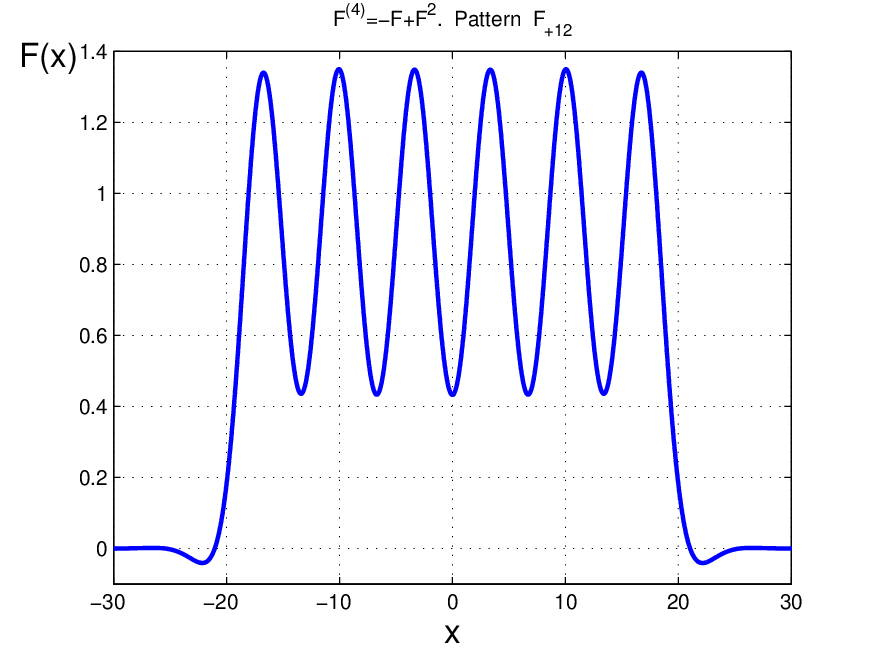} 
}
\caption{Patterns $F_{+2,4,+2,4,+2}$, $F_3$, $F_{+6}$ and $F_{+12}$.}
\label{f4-NN9}
\end{center}
\end{figure}

\

\subsection{More numerics: a direct shooting from the exponential tail}

We use now a shooting approach directly  from sufficiently small exponential tails and apply
 another solver {\em ode45} of {\tt MatLab} with, at least,
  $$
  \mbox{Tols}= 10^{-10} \,\,\,\mbox{or less}.
  $$
  Such a shooting is necessary and unavoidable  for more {\em divergent} models where a standard iteration procedure as in  the {\em bvp4c}
 often cannot deliver a result in view of  conservation laws, conditions, and more complicated subsets of patterns. As a typical example among similar others, this happens for already announced  degenerate equations like (this is an ``ugly"  but interesting equation from such a family and it is variational in $H^{-1}$!)
  $$
  ((F')^3)'''=-F-(F^5)''  \inB \re.
  $$ 
  
First, we shoot from the left-hand side by using the
equivalent representation
of patterns \ef{OneP1} via 
 their exponentially decaying tail as $x
\to -\iy$.
This  representation of the leading expansion term assumes that
the full 2D tail \ef{full1} with constants $C_{3,4}$ ($C_{1,2}=0$)
is recovered from \ef{OneP1} by the invariant translation in $x$.

In Fig. \ref{Anal1}, we show shooting from the left hand side by \eqref{OneP1}
with the data
  $$
  x_0=-30 \,\, (\mbox{the initial shooting point}) \andA B=0.0000828,
  $$
  when the pattern $F_0(x)$ appears. Continuing the same shooting,
  in the next Fig. \ref{Anal2},  we observe $\sim F_{+4}(x)$ and
  even a transition to $\sim F_{+6}(x)$ for
   $$
   B=0.0000830016.
   $$
 Further improving the $F_{+6}$-structure was beyond our accuracy (and
 patience).

\begin{figure}[htbp]
\begin{center}
\includegraphics[scale=0.52]{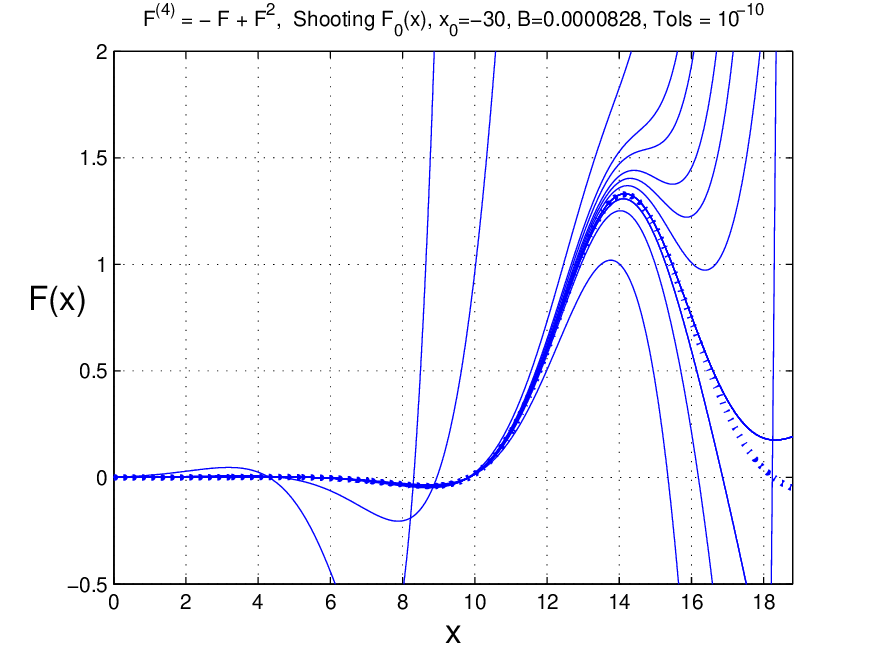}
\caption{Shooting $F_0(x)$.}\label{Anal1}
\end{center}
\end{figure}

\begin{figure}[htbp]
\begin{center}
\includegraphics[scale=0.52]{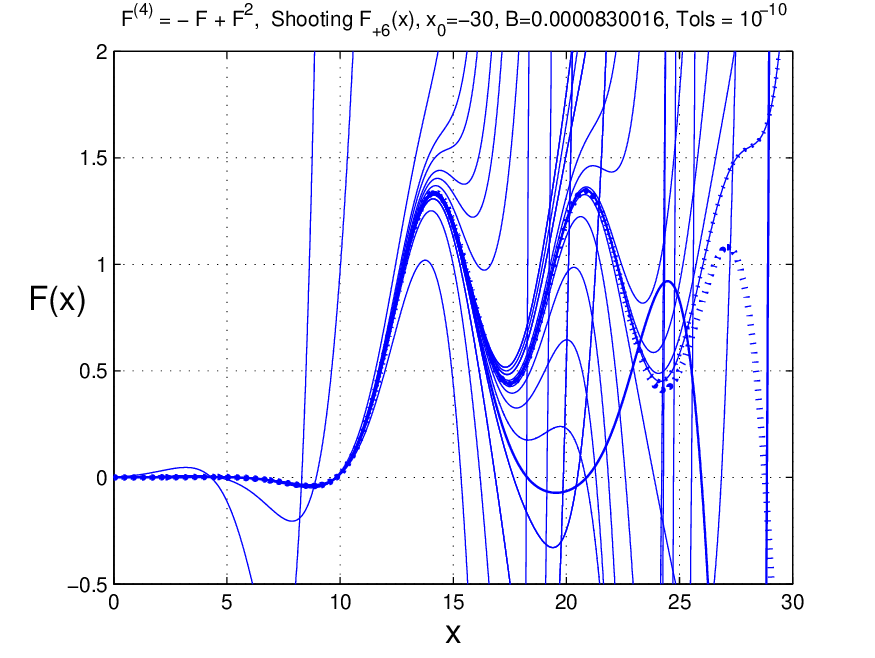}
\caption{Shooting $F_{+4}$ and trying  $F_{+6}$.}
  \label{Anal2}
\end{center}
\end{figure}

In order to match various structures at $x=0$, we take a sum of
two proper exponential expansions to create a symmetric even one and
 similar to  \ef{OneP1} as $x \to -\iy$ we have
 \be
 \label{BBB2}
 \begin{matrix}
 F(x)=\bar F_B(x) +O(B^2) \,\,\, \mbox{for small} \,\,\, |B|>0, \,\,\, \mbox{where}
 \\
\bar F_B(x) \equiv \frac 12 \,
 [F_B(x)+F_B(-x)]=B\cosh(\mu x)\, \cos(\mu x),
  \end{matrix}
 \ee
 so that we can match a pattern in a symmetric even way in both directions
 $x>0$ and $x<0$. In Fig. \ref{Anal3}, we show how to get $F_{+4}(x)$
 on the right-hand side for the shooting parameters
  $
  x_0=0 \andA B=-0.0518561<0.
  $
Reflecting this Figure by $x \mapsto -x$ yields the pattern
 $
 F_{+4,2,+4}(x).
  $
Similarly, in Fig.  \ref{Anal4}, for
 $
 x_0=0 \andA B=0.0022782>0,
 $
we see $F_0(x)$ on the right-hand side and, hence, by reflection
 $
 F_{+2,4,+2}(x).
 $

\begin{figure}[htbp]
\begin{center}
\includegraphics[scale=0.52]{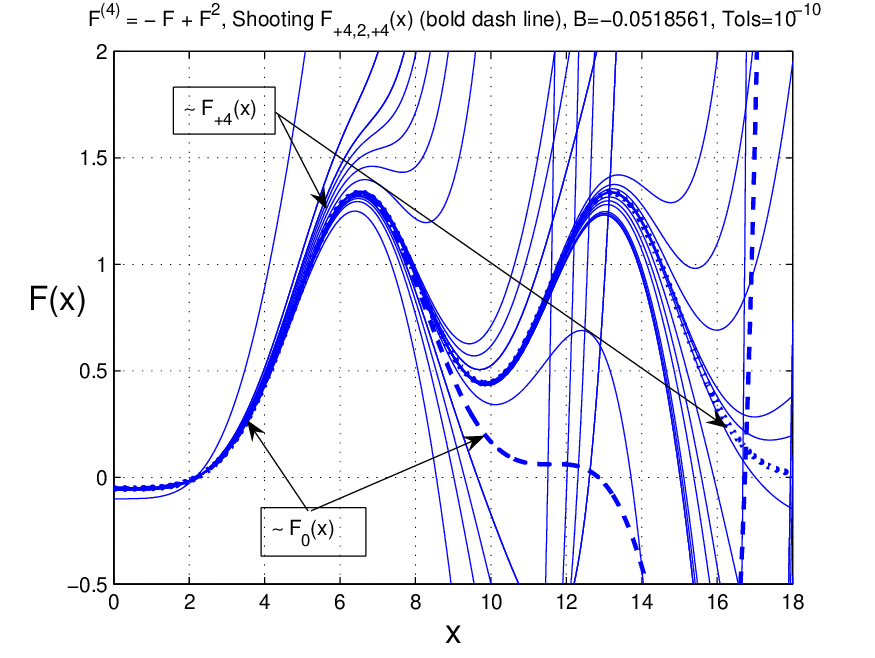}
\caption{Patterns $F_0$,  $F_{+4}$, and overall
$F_{+4,2,+4}$.}\label{Anal3}
\end{center}
\end{figure}

\begin{figure}[htbp]
\begin{center}
\includegraphics[scale=0.52]{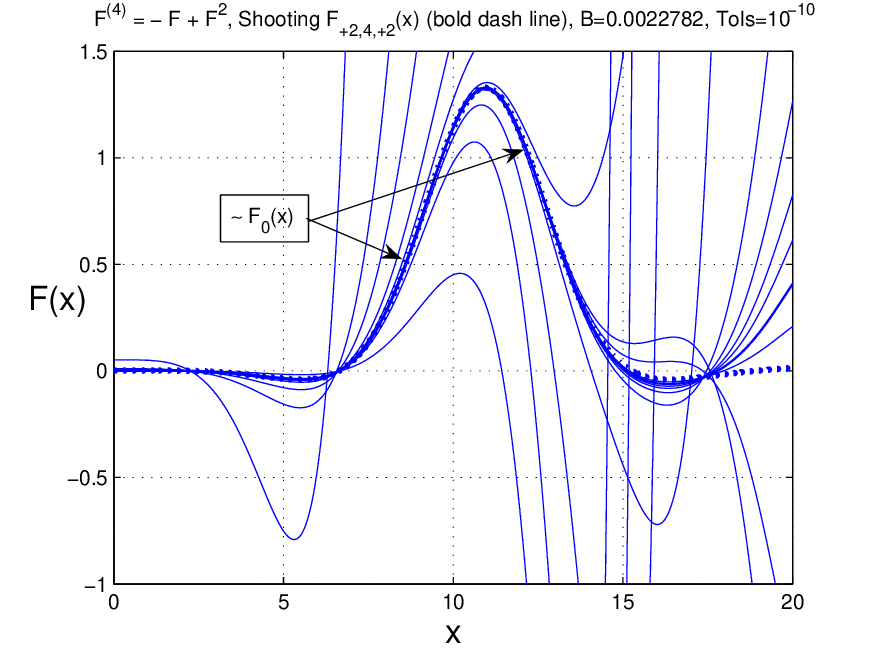}
\caption{Patterns $F_0$ and overall $F_{+2,4,+2}$.}\label{Anal4}
\end{center}
\end{figure}

In general, the {\it  ode45} solver is quite tricky to use for such
4th-order nonlinear ODEs, though it successfully  confirms the
previous results.

\subsection{Critical values of the functional by numerics}

The equalities  \ef{F22} allow us to understand numerically which
solutions of
 \ef{N1.1} deliver the main (extremal) critical value.
For convenience, in Fig. \ref{Crit23}, we present again the
profiles, for which we indicate their critical values.

First of all using notations from \ef{F22} we have that the first critical values corresponding to $v_0(x)=c_0 F_0(x)$
of $\Phi(r_0(v)v)$ 
and $F_0(x)$ of $\Phi(F)$ are extremal in the following sense:
 \be
 \label{c010}
 \fbox{$
 v_0: \quad c_0=0.4154... \, \mbox{is maximal}, \,\,\,\, F_0: \quad C_0=\frac 1{6c_0^2}=0.9659...\,\,\,\mbox{is minimal positive}. 
 $}
 \ee
This is important for us and serves as a first (numerical)
confirmation that $F_0(x)$ corresponds to the absolute critical
value of the functional \ef{F01}, \ef{F02}. It turns out that it
is even and  has the simplest geometric shape among other
patterns.
This is confirmed by symmetrization and level sets rearrangements
 for fourth-order problems though a full justification for nonlinear problems is not straightforward even in 1D; see
 \cite[Ch.~3]{GGS10} for details.
 
 Thus, according to \ef{F22} we will try to partially order our possible patterns as critical points according to there critical values: for $k \ge 0$,
  \be  
  \label{CrV1}
  \begin{matrix}
  \quad v_k: \quad c_k=\Phi(r_0(v_k)v_k) \equiv \int v_k^3 \to 0^+\,\,\,\mbox{is decreasing},
  \\ 
  F_k: \quad C_k=\Phi(F_k)= \frac 1{6c_k^2} \to +\iy \,\,\,\mbox{is increasing}, \quad\,\,
  \end{matrix}
  \ee
where the value $C_*=0$ of the trivial critical point $F_*=0$ can be attributed to the second sequence   as the first element.
Overall, those sequences in \ef{Tr1} look like belonging to a standard L--S type family of critical points but as we mentioned  in the present case of indefinite operators we cannot find definite traces of the L--S category theory
(but an ``L--S-type sequences" exist!).  
 
Anyway,  we do not hesitate to state this crucial for us
conclusion as follows:

 \ssk

{\bf Numerical Theorem ${\mathbf F_0}$.} \,\,{\em The first even
basic pattern} $F_0(x)$ {\em in Fig.} \ref{f7-1} ({\em given by the maximum of $\int v^3$ in}
\ef{F09} {\em on $H_0$})  {\em delivers the
absolute minimal positive critical value $C_0>0$ to the functional} \ef{F01}, \ef{F02}.

 \ssk

Note that according to \ef{CrV1}
$$
\tex{
c_k^2= \frac 1{6 C_k}, \,\,\, \mbox{i.e., minimal for $\{C_k>0\}$} \LongA \mbox{maximal for $\{c_k\}$}.
}
$$

Let us describe critical values of other patterns proving  
this ``theorem" where for convenience instead of the subscript ``$k$" we use the actual index ``$\{\cdot\}$" of the pattern. Thus, consider
$$
\mbox{the decreasing sequence critical values $\{c_k\} \to 0^+$ of critical  points $\{v_k\}$.}
$$
 It follows from \ef{F22} that, for any gluing $F_{+2,l,+2}$
 for large $l$ (and not that large, see below),
  \be
  \label{co11}
  \tex{
F_{+2,l,+2}: \quad c_{(\cdot)} \approx \frac {c_0}{{\sqrt 2}}
\approx 0.2938... \, .
 }
 \ee
 For a triple gluing with $l, \, m$ large
  \be
  \label{co12}
  \tex{
F_{+2,l,+2,m,+2}: \quad c_{(\cdot)} \approx \frac {c_0}{\sqrt 3}
\approx 0.2399... \, ,
 }
 \ee
etc. Other profiles also have smaller critical values, so cannot
deliver the absolute minimum (maximum) of the functional:
 \be
 \label{co13}
 F_1=F_{+2,2,+2}: \quad c_1=0.2941... \,,
 \ee
 \be
 \label{co14}
 F_{+2,4,+2}: \quad c_{+2,4,+2} =0.2937... \, ,
 \ee
 \be
 \label{c015}
 F_{+4}: \quad c_{+4}=0.2858... \, ,
 \ee
Observe that the critical values in \ef{co13} and \ef{co14}
 are very close to that in \ef{co11} indicating  that both belong
 to the family of $F_0-F_0$ gluings. In particular, in agreement  with
 \ef{co12},
 we have (see Fig. \ref{F36})
  \be
  \label{F333}
  F_2=F_{+2,2,+2,2,+2}: \quad c_{(\cdot)}=0.2403...\, .
  \ee

On the other hand, for $F_{+4}$ in \ef{c015}, this is not true, so
$F_{+4}$ belongs to another family $\{F_{+2l}, \, l=2,3,...\}$.
For instance (see Fig. \ref{F68})
 \be
 \label{co16}
 c_{+6}=0.2313... \, , \quad c_{+8}=0.1994... \, .
 \ee

\begin{figure}[htbp]
 \hfill   \hfill  \begin{minipage}[t]{0.45\textwidth} 
    \centering 
    \includegraphics[width=6cm,keepaspectratio]{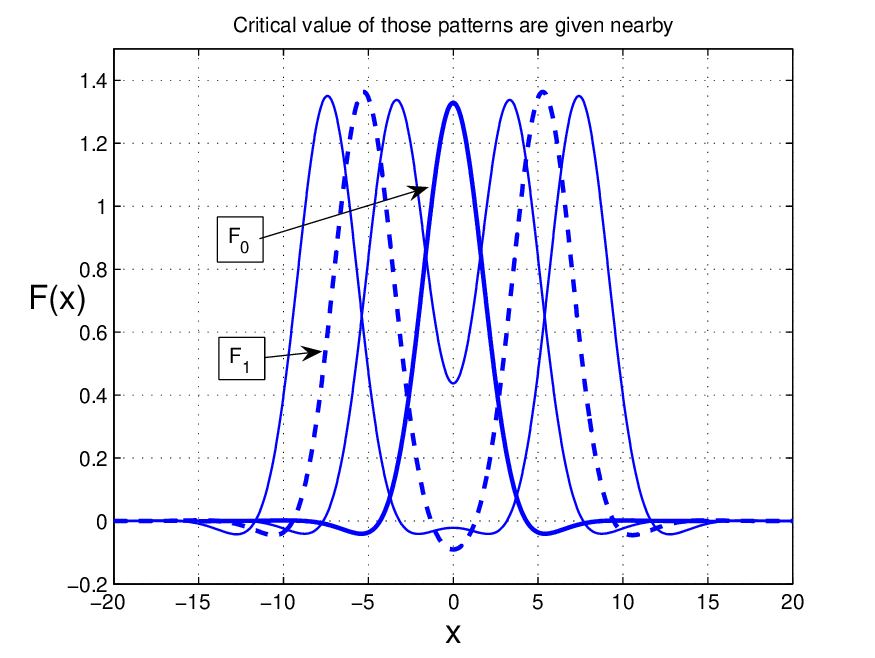}
   \caption{Solutions of \ef{N1.1} with critical values given above.}
    \label{Crit23}
\end{minipage}
  \hfill   \hfill 
\begin{minipage}[t]{0.5\textwidth}
    \centering 
    \includegraphics[width=6cm,keepaspectratio]{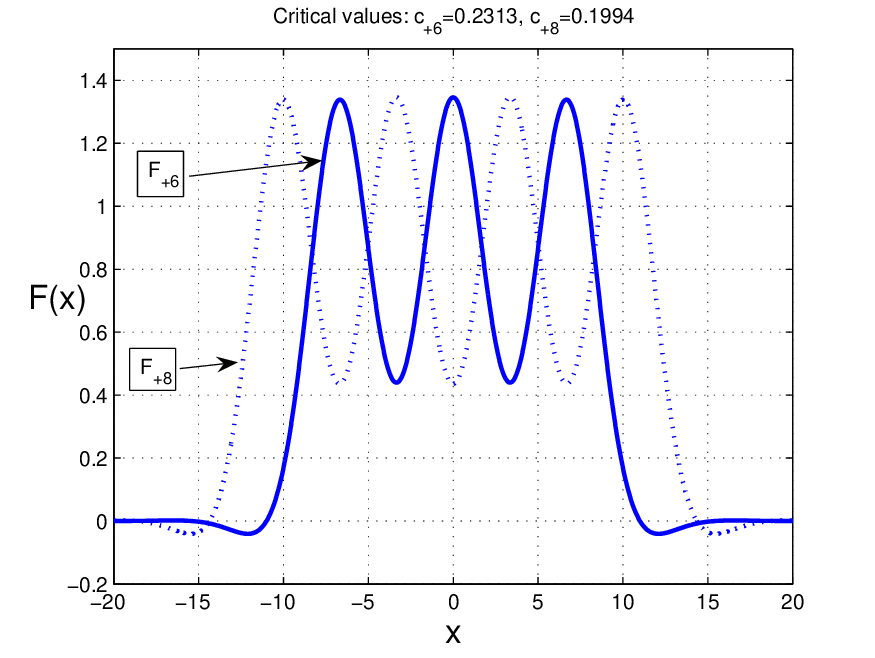}
    \caption{Solutions $F_{+6}$ and $F_{+8}$ of \ef{N1.1} with
critical values \ef{co16}.}
    \label{F68}
\end{minipage}
\end{figure}

\begin{figure}[htbp]
\begin{center}
\includegraphics[scale=0.42]{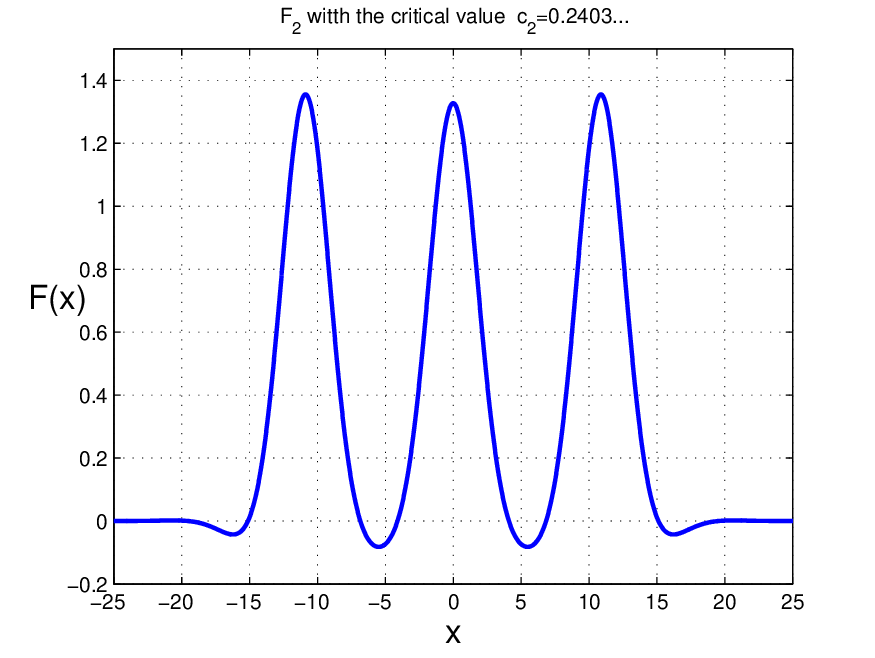}
\caption{The pattern $F_2=F_{+2,2,+2,2+2}$ of \ef{N1.1} with
critical values \ef{F333}.}
    \label{F36}
\end{center}
\end{figure}

\section{On countable subsets of patterns: first elementary
 arguments on a gluing/matching procedure}
\label{S2N}

After dealing with various fourth-order nonlinear ODEs and
obtaining some numerical evidence of what can be or cannot be with
their typical solutions, we now return to our basic model
\ef{N1.1} with the variational 
semilinear elliptic problem but not definite (with non-odd nonlinearities).
We recall again that here, in view of our next plans concerning other equations,  we cannot use any results of the Hamiltonian DSs theory which for such analytic ODEs can 
produce a lot of deep results and conclusions. We again refer to papers \cite{BCT96}--\cite{ChK04} where the most closed Hamiltonian (sometimes with asymptotic perturbations) were studied.

\subsection{The first basic pattern family ${\mathcal F}_1$} 

One of our first goals is to show that  \ef{N1.1} admits a
countable basic family of
patterns, which we have denoted above  by
 \be
 \label{LS12}
 {\mathcal F}_1=\{F_k(x)\}_{\{k \ge 0\}}.
  \ee
  On one hand, 
 each $F_k$ is produced by the simplest and the most ``dense" way of gluing
$k$ elementary first variational pattern $F_0$'s by properly
distribute them over the $x$-axis. On the other hand, $F_k$ represents a $k$-long piece of humps from the periodic orbit $\Gamma_{\rm max}$ where two border humps are glued to exponential tails going to $\pm \iy$.

\subsection{An $F_0-F_0$ asymptotic gluing: the universal expansion constant ${\mathbf C_1^0}$}

We begin with a preliminary
explanation how to match  by shifting in $x$
two patterns 
 $F_0$ via gluing their almost linear exponential oscillatory
 tails.
For convenience, we fix the simplest (and special, see general
setting of gluing below) case
  \be
  \label{f21}
 \mbox{$F_0-F_0$ gluings},
  \ee
  where we show how to  ``glue", in a symmetric even way, two basic
  variational patterns $F_0(x)$, which are shifted in space in such a way to make further
   ``nonlinear matching" possible.
  
Thus,  we are looking for a new 
pattern
approximately satisfying
 \be
 \label{mm1}
 F_\sigma(x) \approx F_0(x+a_n)+F_0(x-a_n), \quad \s=\{+2,l_n,+2\},
  \ee
  where $a_n \gg 1$ is a special sufficiently large shifting  parameter to be determined from an elementary
  algebraic expression to be presented.
In other words, we plan to do the following:
 \be
 \label{mm10}
 \mbox{to match two patterns $F_0(x+a_n)$ and $F_0(-x+a_n)$ at
 the origin $x=0$},
 \ee
 which requires specially  designed symmetry conditions at $x=0$
  to be studied below in detail. 

In the index $\sigma$ in
  \ef{mm1}, as usual, the first $+2$ represents two intersections
  with the steady  state $F_* \equiv 1$ in \ef{N1.1} of the first
  patterns $F_0(x+a_n)$ on the left-hand side  and the last describes the same number for
  $F_0(x-a_n)$, while the intermediate number measures, roughly
  speaking, the total number of zeros (and, sometimes, nearby
  $F_0$'s, extra minmax points) in between. 
  By the known
 oscillatory behaviour of decreasing exponential tails in
  \ef{full1} there holds
   \be
   \label{lk1}
   l_n \sim  n \forA  n \gg   1.
 \ee
 Due to the same asymptotic reasons,
 those shifting parameters
  can take only a discrete set of values $\{a_n\}_{n
  \ge 1}$. We then expect that, according to the half-period of
  oscillations in the tails \ef{full1} governed by $\cos( x/{\sqrt
  2})$,
   \be
   \label{mm2}
   a_n \sim \pi \sqrt{2}n \to +\iy \asA n \to \iy,
   \ee
   i.e., at $n= \iy$, we observe  no interaction between those
   shifted $F_0$-patterns
     at all, so they become independent, and,
   according to \ef{mm1} just move to $\pm \iy$ and eventually
   disappear at $\pm \iy$.

To make a further  approximation of  the equality \ef{mm1}, we use
the structure of the linearized stable (relative to the steady state $F_*
\equiv 0$) exponential tail in  \ef{full1} denoting it by
 \be
 \label{exp1}
  \tex{
F_0(x) \approx  W_0(x;C_1^0,C_2^0) = {\mathrm e}^{-\mu x}
\big[ C_1^0 \cos(\mu x)
+ C_2^0 \sin(\mu x)\big], \,\,\, x \gg 1.
 }
 \ee
Rewriting for convenience \ef{exp1} in the equivalent form
 \be
 \label{exp2}
  \tex{
F_0(x) \approx   W_0(x;C_1^0,C_2^0) =  C_1^0 {\mathrm e}^{-\mu x} \, \cos(\mu x + C_2^0), \,\,\,
x \gg 1,
 }
 \ee
Moreover, in view of the translational invariance of \ef{N1.1}, by
shifting in $x$, we always can make
 \be
 \label{CCC2}
 C_2^0=0,
 \ee
so that the final one-parametric expansion to be used later on
becomes
 \be
 \label{exp2N1}
  \tex{
F_0(x) \approx   W_0(x;{\mathbf C_1^0}) = {\mathbf C_1^0} {\mathrm
e}^{- \mu x} \, \cos( \mu x), \,\,\,
x \gg 1.
 }
  \ee
 This bold faced expansion
 constant ${\mathbf C_1^0}$ is an important characteristic of the basic pattern
  $F_0(x)$.
 
Bearing in mind the known regularity/analyticity properties of
uniformly bounded and decaying to zero solutions of \ef{N1.1}, we
have a similar asymptotically correct expression of the derivative
in the tail:
  \be
  \label{exp3}
  \tex{
  F_0'(x) \approx W_0'(x;
  {\mathbf C_1^0})=
  - \mu  \,
{\mathrm e}^{-\mu x} \,
  {\mathbf C_1^0} \big[ \sin(\mu x) +
 \cos(\mu x) \big].
 }
 \ee
 Therefore, since we want to reflect a properly perturbed
 structure $F_0(x+a_n)$ with respect to $x=0$ and to get such an
 $F_0-F_0$ gluing, we have to have the first symmetry condition to be
 approximately valid
  \be
  \label{SD1}
 F_0'(a_n) \approx W_0'(a_n;{\mathbf C_1^0})=0.
 \ee
 Indeed, by \ef{exp2N1} and  \ef{exp3}, we also achieve two  further
 matching conditions for these two glued/matched patterns
 \be
 \label{SD2}
 F_0(x+a_n) \approx W_0(x+a_n;{\mathbf C_1^0}) \andA
F_0(-x+a_n) \approx W_0(-x+a_n;{\mathbf C_1^0}).
 \ee
 Namely, by construction, we have  the asymptotically sharp
 tails $W_0(\pm x +a_n;{\mathbf C_1^0})$ as approximating  patterns $F_0(\pm
 x+a_n)$:
 \be
 \label{SD3}
 \begin{split}
\mbox{at $x=0$:} \,\,\,W_0(x+a_n;{\mathbf
C_1^0}) & =W_0(-x+a_n;{\mathbf C_1^0}), \\ 
   W_0'(x+a_n;{\mathbf
C_1^0}) & =W_0'(-x+a_n;{\mathbf C_1^0})=0,
 \\
    W_0''(x+a_n;{\mathbf
C_1^0}) & =W_0''(-x+a_n;{\mathbf C_1^0}).
 \end{split}
 \ee
Thus, the only problem is that we {\em principally} cannot match
the last remained symmetry condition since
 \be
 \label{SD4}
\mbox{at $x=0$:} \,\,\,W_0'''(x+a_n;{\mathbf C_1^0}) \not
=W_0'''(-x+a_n;{\mathbf C_1^0}).
 \ee
 Actually those third derivatives $W_0'''(\pm x+a_n)$ get the
 opposite non-zero values
 \be
 \label{SD77}
\mbox{at $x=0$:} \,\,\,W_0'''(x+a_n;{\mathbf C_1^0})= -
W_0'''(-x+a_n;{\mathbf C_1^0}) \not =0.
 \ee
  Obviously, there holds:
  \be
  \label{SD78}
  \begin{matrix}
  \mbox{for the linear ODE $F^{(4)}=-F$, $F(x) \not \equiv 0$, all four matching}
  \\
  \mbox{conditions
  cannot be valid by uniqueness.}
 \end{matrix}
  \ee

Indeed, in this linear construction only two  from all  four solutions of the fundamental system of $D^4_x+I$
are involved. Clearly,  dealing with all four small solutions a similar matching can be performed  using the corresponding linear tail presentation for $x \approx 0$ with general matching conditions at $x=0$: e.g., 
 \be
 \label{PR12}
 \begin{matrix}
 F_\s(x) \approx C_1 {\rm e}^{-\mu(x+a_n)}\cos(\mu(x+a_n))+ C_2 {\rm e}^{\mu(x-a_n)}\cos(\mu(x-a_n)), \,\,\,|C_{1,2}| \ll 1, 
 \,\,\,a_n \gg 1,
 \\
 \mbox{where} \quad [F_\s]=[F_\s']=[F_\s'']=[F_\s''']=0 \atA x=0.
 \end{matrix}
 \ee
 Observe that here $F(x) \sim O({\rm e}^{-\a_n})$ for $x \sim 0$, $C_{1,2}$ are arbitrary small, and the nonlinear term $F^2$ can supply a perturbation therein of the order $\sim O({\rm e}^{-2\a_n})$.

Therefore, a condition like \ef{SD4} for the above {symmetric} matching $F_0(\pm
x+a_n)$'s is the only one which cannot be achieved approximately,
so it can be satisfied under the presence of the nonlinear term
$+F^2$ in \ef{N1.1} and was observed in many reliable numerical
tests. Of course such a matching procedure is not a local one in a
neighborhood of $x=0$, but concerns a global structure  of a
pattern in a whole $\re$. It seems it cannot be solved by any
fixed
 point (or similar) arguments in view of a presence of a truly 2D
 unstable manifold around those patterns $F_0(\pm x+a_n)$
 involved. By the same reason, a proper application of various nonlinear analysis functional
 techniques to solve that problem is not  straightforward.

From our approximating condition \ef{SD1}
by using \ef{exp3} we obtain a simple (in this particular case) algebraic
equation for admissible values of  matching parameters $\{a_n\}$:
 \be
 \label{exp4}
 \tex{
 \tan \big( \frac{a_n}{{\sqrt 2}}\big) =-1,
 }
 \ee
 whence the following first approximation  of the shift parameters  (cf. \ef{mm2}):
  \be
  \label{exp5}
  \tex{
  a_n \approx  - {\sqrt 2} \big( \frac \pi 4 \big)
   + \pi {\sqrt 2} \, n \quad \mbox{for } \,\,\, n \gg 1.
 }
   \ee
  We have seen  such a phenomenon in various
 numerics for such not that large $n$'s, for which the
 exponentially small tails are sufficiently visible in practice.

   Therefore, the even matching is then performed at $x=0$, where we have
   impose the usual symmetry conditions \ef{bc1}, i.e., assuming such a perturbation
   of a true solution $F_\s(x)$ by shifting in $x$, such that in the
   asymptotic tail area (now, after $a_n$-shifting, at $x=0$)
   there hold:
    \be
    \label{kk1}
    F_\s'(0) = F_\s'''(0)=0,
    \ee
    together with the continuity of other values and derivatives.
Solving the problem \ef{N1.1}, \ef{kk1} for $\{a_n\}$
would lead to 
a countable number of new
solutions, see below.

Consider the approximation \ef{mm1}.
 For convenience, in view
of the even symmetry of such a pattern, we perform the shifting $x+a_n
\mapsto x$ (so that the matching point $x=0$ for a perturbed first
structure $F_0(x+a_n)$ is now at $x=a_n$), and consider a single
pattern $F_0(x)$ for $1 \gg x \le a_n$, with still unknown $a_n
\gg 1$, being a free parameter, at which there must hold, for the
sake of a further reflection,
 \be
 \label{kk98}
 F'_0(a_n)=0.
 \ee
 We thus require just a single ``linearized" condition for
 $F_0(x)$ in \ef{kk98} instead of two in \ef{kk1} ending up
 strictly this part of the problem.
 This allows us to reflect the pattern $F_0(x)$ relative to $x=0$, to get the
first approximation towards the required $F_\s$ gluing pattern. At
a further stage a nonlinear interaction (via the quadratic $F^2$-term), 
with the two shifted patterns $F_0$ involved, is required to create
the desired $F_\s$.

\smallskip

Overall, in vies of such a symmetric tails matching and \ef{PR12}
we can expect that 
a similar construction is available for many pairs of more arbitrary
patterns $F_\mu(x)$, $F_\nu(x), ...\,$, which should be shifted
 sufficiently from each other, to reveal their linearized approximately
 exponential tails, with different values of the expansion
 coefficient $C_1^\mu,\, C_2^\mu,..., \, C_1^\nu, C_2^\nu,...$ \, , leading
 to a discrete countable set of the appropriate shift-parameters
 $a_n^{\mu, \nu}$ for each successive pair of simpler patterns.
 Then we arrive at a situation when  simple higher-order ODE problems like
\ef{N1.1} and many others, semilinear or quasilinear, can admit an incredible countable set of
 possible solutions, which are ``localized" in $x$-space in the sense that all of
 them
  vanish, as $x \to \iy$, exponentially fast. For quasilinear degenerate equations this means  precisely {\em localized}, i.e., with finite interfaces and supports. E.g., this is true in the case of the leading thin-film equation: 
  $$ 
  (|F|F''')'=-F+F^4 \inB \re \LongA
  {\rm meas}\, {\rm supp} \,F(x)< \iy.
  $$

After a natural ``geometric" closure of this solution
subset, we can then claim that this subset is actually {\em chaotic}:
 omitting the boundary condition at infinity, $F(\iy)=0$,
 \be
 \label{exp6}
 \mbox{
 such solutions  $F_\s(x)$ in $\re$ exist for infinitely many 
 non-periodic indexes  $\s$},
  \ee
  with our not-that-strict and formal definition of the index $\s$
  explained above
  and involving when necessary the number of zero and/or minmax point
  in between the leading $F_k$-structures. We will return to this question in this section later on when discussing the geometric structure of the attractor $W^{2,\infty}$ composed from the pattern homoclinics in $\re^4$.
Another precise meaning of  chaotic orbits for fourth-order ODEs
with different coercive operators and non-oscillatory  tails of
patterns
 was used  in \cite[p.~198]{PelTroy}.

\subsection{A geometric justification  of multiple patterns via
 the matching/gluing procedure}

In general, we claim that existence of  proper oscillatory decreasing tails of solutions  at infinity having a 2D stable
manifold (a saddle node or $(2,2)$ defect indices on $\re_\pm$ in the linear case) is absolutely necessary and crucial for existence of a countable subset of patterns. As we have seen such oscillatory tails can be observed in various and quite unusual 4th-order models where as $x \to +\iy$, $F(x) \to 0$,
 \be  
 \label{AA1}
     F^{(4)}=-F^3 + O(F^4) \LongA F(x) \sim x^{-2} \phi_*(s),\,\,\, s=\ln x,
  \ee
   where   $\phi_*(s)$ is a periodic  oscillatory component satisfying some autonomous ODE, and, say,  
 \be 
 \label{AA2}   
     ((F'')^3)''=-F^3 +O(F^4) \LongA
     F(x) \sim {\rm e}^{ax}, \,\,\, 9a^8=-1, 
\ee
where $a$ has the maximal ${\rm Re}\, a_k<0$ among all the eight roots $\{a_k\}$ of this characteristic equation.
  This list of difficult and not fully proved nonlinear asymptotic expansions leading to special kinds of ``homoclinics" can be extended but computations become cumbersome.

Using our convenient canonic model \ef{N1.1} with the simplest explicit exponential tails  and the above experience, we can complete our study of the existence of a countable subset of patterns. 
It consists of a few steps:

\begin{enumerate}
\item[{\bf (i)}] A nontrivial variational pattern denoted here by $F_0(x)$ exists being the critical point  of the absolute extremum of the functional.

\item[{\bf (ii)}] $F_0(x)$ has the known asymptotic tails (\ref{inf1N}) as $x \to \pm \iy$ and hence the behaviour (\ref{OneP1}) for some $B=B_0 \not = 0$. We denote local solutions for a given $B$ by $\hat F_B(x)$. Thus, with $\mu = \frac 1{\sqrt 2}$:
 \be 
 \label{Pr1}
 \begin{matrix} 
 \hat F_0(x)= B_0 {\rm e}^{\mu x}\cos(\mu x)+ O({\rm e}^{2 \mu x}), \quad x \to -\infty,
 \\
  \hat F_0(x)= C_0 {\rm e}^{-\mu x}\cos(\mu x+ \varphi_0)+ O({\rm e}^{-2 \mu x}), \quad x \to +\infty,
   \end{matrix}
   \end{equation}
 where $C_0$, $\varphi_0$ are some constants, i.e., the last line means that  the asymptotics of $\hat F_0$ belongs to a true 2D stable subspace shown in (\ref{inf1N}). Without loss of generality we assume that
  $
  C_0>0.
  $
 
\item[{\bf (iii)}] According to the strategy formally discussed above in detail,  we have to find some $B \approx B_0$ for which
  \be 
  \label{Pr2}
  \exists \,\, x_B \gg 1: \quad \hat F'(x_B)=F'''(x_B)=0,
  \ee
so that by reflection about $x=x_B$ we get a new pattern by gluing $\hat F_B(x)$ and $\hat F_B(2 x_b-x)$ which is even relative to $x=x_B$. Using  (\ref{Pr1}) for $x \gg 1$ we obtain:
 \be 
 \label{Pr3} \begin{array}{l}
 \hat F_0'(x)= - \mu  C_0 {\rm e}^{-\mu x}\big[\cos(\mu x+ \varphi_0)+ \sin(\mu x+\varphi_0)\big]
 + O({\rm e}^{-2 \mu x})=0
 \\
 \hat F_0''(x)= 2\mu^2  C_0 {\rm e}^{-\mu x}\sin(\mu x+\varphi_0)
 + O({\rm e}^{-2 \mu x}),
 \\
 \hat F_0'''(x)= 2 \mu^3  C_0 {\rm e}^{-\mu x}\big[-\sin(\mu x+ \varphi_0)+ \cos (\mu x+\varphi_0)\big]
 + O({\rm e}^{-2 \mu x}).
 \end{array}
 \ee
 such that from the first equality we deduce that $\cos(\mu x+ \varphi_0)+ \sin(\mu x+\varphi_0) \approx 0$ and, then, $\mu x + \varphi_0 \approx  - \frac {\pi}4 + \pi k$, with $k \gg 1$. 
It follows from the first and the last lines in (\ref{Pr3}) that for all large $k$
 \be 
 \label{Pr4}
 \begin{matrix} 
 \mbox{at  maxima} \,\,
 x_k^+ + \varphi_0 \approx - \frac {\pi}4 + 2\pi k: \,\,\,\hat F_0'=0, \,\,\,\hat F_0''' = C_0 {\rm e}^{-\mu x_k^+}
 +O({\rm e}^{-2\mu x_k^+}) >0,
 \\
   \mbox{at  minima} \,\,
 x_k^- + \varphi_0 \approx  \frac {3\pi}4 + 2\pi k: \,\,\,\hat F_0'=0, \,\,\,\hat F_0''' = - C_0 {\rm e}^{-\mu x_k^-}
 +O({\rm e}^{-2\mu x_k^-}) < 0.
 \end{matrix}
  \ee

\item[{\bf (iv)}] Since any solution including $F_0$ ((\ref{OneP1}) prevents translation in $x$)
 is isolated by the analyticity (actually, this is not necessary, a sufficiently smoothness is enough),
 patterns do not exist for small $|B-B_0|>0$. Hence for any $B \approx B_0$, $B \not = B_0$ the functions $F_B(x)$ is not a homoclinic so that $F_B(x)$ is large enough for $x \gg 1$, 
 in the sense that it cannot be uniformly close to zero therein due to appearing unstable modes $\sim {\rm e}^{+\mu x}$ in (\ref{full1}). By the continuity in $B$
  \be
  \label{Pr11}
  \hat F_B(x) \to F_0(x) \,\,\,\mbox{as $B \to B_0$ uniformly in any interval $(- \iy, L]$ for any $L \in \re$}.
   \ee

 \item[{\bf (v)}] Then either such $F_B(x)$ blows up at a finite $x \to x_B^-< +\infty$ (see below how it happens and it does if $F_B > \frac 32$)) or remain sufficiently positive or negative large for $x \gg 1$ ($x <x_B$ in case of blow-up) simply meaning that the linearized asymptotic tail of $F_B(x)$ as $x \to +\iy$ must be destroyed for any $B \not = B_0$
  being arbitrarily closed to $B_0$. We do not specify more details on such a matter, but note that finally 
  due to the above geometric min/max properties of $ \hat F_B(x)$, such an essential destruction of the tail for $x \gg 1$ will inevitably lead to the appearance of new more complicated patterns obtained just by reflection over symmetry points.

\item[{\bf (vi)}] Therefore, changing $B \approx B_0$ we inevitably find an interval for $x \gg 1$ such that
 \begin{itemize}
 \item either  $\hat F_B(x) \gg  \sup |F_0(x)|>0$ or;
 
\item on the contrary $\hat F_B(x) \ll - \inf |F_0(x)|<0$.
  \end{itemize}
  In both cases such a $B$-evolution destroying the min/max point in (\ref{Pr4}) would lead 
 to an inflection point where
  \be 
  \label{Pr5} \hat F'_B= \hat F_B''=0 \,\,\, \mbox{and}: \quad \mbox{\rm  (a)}\,\, \hat F_B'''
   \ge 0, \quad \mbox{\rm  (b)}\,\,\hat F_B'''\le 0.
    \ee 
These correspond to a generic disappearance of:

  \begin{itemize}
 \item a positive maximum moving down  to sufficient negative values, when such a deformation goes from the right-hand side while the left-hand one remains  more untouched by the continuity in $B$, i.e., by (\ref{Pr11});
 
 \item  a negative  minimum moving up  to sufficient positive values again due to (\ref{Pr11}).
  \end{itemize}
\item[{\bf (vii)}] Finally, we observe that {\rm  (a)} corresponds to a $B$-evolution of a minimum point with $F_0''' < 0$ by 
  (\ref{Pr4}) to $\hat F_B''' \ge 0$ at this inflection in (\ref{Pr5}) and vice versa for {\rm  (b)}:  a maximum point 
  with $F_0''' > 0$ by 
  (\ref{Pr4}) to $\hat F_B''' \le 0$ at this inflection in (\ref{Pr5}).
  
  Therefore, there exists a $B \approx B_0$ (actually, $B$ can be arbitrarily close to $B_0$) such that (\ref{Pr2}) holds. Moreover, any points $x_k^\pm$ in (\ref{Pr4}) for large $k$
  can be used in such a manner, i.e., there exists an infinite number of different gluings provided those min/max points
  with sufficient accuracy belong to a true asymptotic tail.
  \end{enumerate}
  
  \smallskip
  
  {\bf Remark 1: oscillatory sign-changing asymptotic tails are crucial.}
  The above analysis shows that the key ingredient of the construction of new patterns is:
  \be 
  \label{Pr13}
 \exists F_0\,\, \mbox{and asymptotic tails are oscillatory} \Longrightarrow \exists \,\, \mbox{infinitely many patterns.}
  \ee
 This explains why everyone can find a lot of patterns in practically any reasonable nonlinear higher-order ODEs of a Cahn-Hilliard type including those we study here.
  
  \smallskip

  {\bf Remark 2: {\em a priori} existence of an $F_0$ is not necessary.} Indeed, the same geometric argument as in {\bf (vi)} and {\bf (vii)} applies if we consider a semi-orbit satisfying \ef{Pr1} as $x \to -\iy$ with some $B_0$ which blows up (a generic behaviour if, by the assumption,  no homoclinics exist). Then changing $B_0$ will inevitably lead to a similar deformation of some min/max points to a symmetry point, i.e., to the birth of a pattern.

\subsection{On a general principle of matching/gluing}

We have discussed a particular case of matching based on the following geometric observation:
 \be 
 \label{Infl1}
 \begin{matrix}
 \mbox{min/max points with $F'=0,\,\,F'''>0$ or $<0$} \Longrightarrow
 \\
  \mbox{inflection points with
 $F'=F''=0, \,\,F''' \le 0$ or $ \ge 0$}
 \end{matrix}
 \ee
 implying that along such a local $B$-evolution there appears a point $(B,x_B)$ at which $F'=F'''=0$ so the reflection
 at $x=x_B$ yields a desired (even) patterns in $\re$.
 
 It is easy to a state the most general matching/gluing conditions, which in general become useless without deep involvement 
 of other preliminary properties of the flow. Those are: we shoot from both sides  with two solutions
  \be 
  \label{Infl2}
  \hat F_{B_\pm}(x), \quad \hat F_{B_\pm}(x) = F_{B_\pm}(x)+... \equiv B_\pm {\rm e}^{\pm  \mu x} \cos(\mu x) +...
\,\,\,\mbox{as} \,\,\,x \to \pm \iy,
 \ee 
 and we need to solve the following matching system including two point of matching $x_\pm \in \re$:
 \be
 \label{Infl3}
(B_\pm, x_\pm): \quad \hat  F^{(l)}_{B_-}(x_-)= \hat F_{B_+}^{(l)}(x_+), \,\,\,l=0,1,2,3.
 \ee
This gives four equations with four unknowns. For any solution, the resulting patterns is given by translation
 \be 
 \label{Infl4}
 F(x)=
 \left\{
 \begin{matrix}
 \hat F_{B_-}(x), \,\,\, x \le x_-, \qquad \qquad  \quad 
 \\
 \hat F_{B_+}(x_+  - x_- + x), \,\,\, x \ge x_-.
 \end{matrix}
 \right.
  \ee
According to our experience partially presented in this paper, in most of the cases this system admits  a countable subset of solutions provided that the Fr\'echet derivative at $F=0$ of the nonlinear operators has a proper  defect
index $(2,2)$ (and $(m,m)$ for the leading operator $(-D_x^2)^{m}$).

It is seen that in above analysis we have mainly concentrated on the cases where we get an even pattern (\ref{Infl4}) relative to $x=x_-$ via a geometric scrutiny as in (\ref{Infl1}). Indeed, via that construction, the system (\ref{Infl4}) is reduced to a single equation in the present variable:
 $$
 \hat F_{B_-}'''(x_-)=0 \,\,\,(\mbox{the continuity of $F,F'=0,F''$ is guaranteed}),
 $$ 
which can be reasonably analyzed.

\subsection{Some  comments on
 the corresponding parabolic gradient system}

Here we try to explore another parabolic evolution tool to deal with the matching approach.
Denote by $\hat F_0(x)$ the following even function, which we have
dealt with asymptotically to predict those $x$-shifting for a
possible gluing:
 \be
 \label{evol1}
 \hat F_0(x) = \{F_0(a_n+x) \forA x \le 0; \quad F_0(a_n-x) \forA x
 \ge 0\},
 \ee
 i.e., as usual, we reflect $F_0(a_n+x)$ relative the $y$-axis.
 Now the values of $a_n$ are derived by using the same principles as before, 
 at $x=0$:
\begin{enumerate}
 \item[(i)] $\hat F_0(x)$ is continuous, $\hat F_0(0^-)=\hat
 F_0(0^+)$,

\item[(ii)]  $\hat F_0'(x)$ is continuous, $\hat F_0'(0^-)=\hat
 F_0'(0^+)=0$,

\item[(iii)] $\hat F_0''(x)$ is continuous, $\hat F_0''(0^-)=\hat
 F_0''(0^+)$ (reflection gives an even extension),

 \item[(iv)]  $\hat F_0'''(x)$ is discontinuous, $\hat F_0'''(0^-)=-\hat
 F_0'''(0^+)=d_3(n) = F_0'''(a_n) \not = 0$.
\end{enumerate}
Note that, in our construction (iv) is inevitable, otherwise the
continuity of $\hat F_0'''(x)$ at $x=0$ would contradicts the
uniqueness for the Cauchy problem: with given data at $x=0$, there
exist two solutions for $x>0$: $F_0(x)$ and $F_0(a_n-x)$.

We next consider the Cauchy problem for the corresponding
semilinear parabolic equation for a function $w=w(x,t)$:
 \be
 \label{evol2}
 w_\t = {\mathcal L}(w) \equiv -w_{xxxx}-w+w^2 \inB \re \times \re_+, \quad w(x,0)= \hat
 F_0(x) \inB \re.
 \ee
 By the classical parabolic theory, \ef{evol2} has a unique local
 in time
 classical analytic (for $t>0$) solution. 
 
 Since this PDE is a gradient system
 (the operator is variational), we can
  multiply \ef{evol2}
 by $w_\t$ in $L^2(\re)$ to get the required monotonicity
  \be
  \label{evol3}
  \tex{
  \frac{\mathrm d}{{\mathrm d}\t} \Phi(w)(\t) = - \|w_\t(\t)\|_2^2 \le
  0 \forA t >0,
  }
  \ee
  where $\Phi(w)$ is the functional \ef{F01}.
 Moreover, under the assumption
  \be
  \label{evol4}
  \mbox{the CP has a global uniformly bounded solution}
  \ee
  (we have to impose that since blow-up may occur, while proving
  non-blow-up is not straightforward),
 we obtain two  main gradient system conclusion:
\begin{enumerate}
 \item[(i)] $\Phi(w(\t)$ is monotone decreasing on evolution orbits, and

 \item[(ii)] integrating \ef{evol3} ensures the
  boundedness of $w_\t$ in $L^2(\re \times (1,+\iy))$:
 \be
 \label{evol5}
 \tex{
 \int_1^\iy \|w_\t(\t)\|_2^2 \,{\mathrm d}\t < \iy.
 }
 \ee
 \end{enumerate}
 Then, the omega limit set of the orbit (all partial limits along
 any
 sequence $\{\t_n\} \to +\iy$) satisfies:
  \be
  \label{evol6}
  \mbox{$\omega(\hat F_0)$ consists of stationary solutions,
  $\{F(x)\}$}
 \ee
 and is also compact and connected. In other words, there exists a
 solution of \ef{N1.1} and a sequence $\{\t_n\} \to \iy$ such that
  \be
  \label{evol7}
  w(\cdot,\t_n) \to F(x) \,\,\mbox{uniformly in $\re$} \asA n \to
  \iy.
  \ee
  Thus, here  $F$ is a desired stationary solution.
  We expect that such an $F(x)$ must stay in a small neighbourhood of
  the initial data $\hat F_0(x)$ at least for large $n \gg1$, since the only ``defect"
of the initial data \ef{evol1} is expressed by a small discrepancy of $\hat F_0'''(0)$: 
$$
d_3(n)=F_0'''(a_n) \to 0 \asA n \to \iy.
$$  
However, we cannot complete such a proof, which seems require an extra subtle estimate on solutions
for $\t \gg 1$. A general and quite common estimates like \ef{evol5} are not sufficient.

On the other hand, any unstable mode generated by such a perturbed
stationary profile $\hat F_0(x)$ could lead again to blow-up in
\ef{evol2} in view of the superlinear combustion-like  term $u^2$
according to the basic reaction-diffusion model
 \be
 \label{evol8}
 \tex{
 u_t=-u_{xxxx} + u^2, \quad \sup_x|u(x,t)| \to +\iy \asA t \to
 T^-
 < \iy,
 }
 \ee
 i.e. even the boundedness \ef{evol4} is under a threat. We will
 pay some attention to blow-up in \ef{evol8} later on in this section.

\ssk

 Let us again point out that we observe a measure in
the action of our operator in \ef{evol2} on the data in
\ef{evol1}:
 \be
 \label{evol10}
 {\mathcal L}(\hat F_0)(x)= -2d_3(n) \d(x) \quad \hbox{in}\quad \re,
 \ee
 where by construction on the exponential tail,
  \be
  \label{evol11}
  d_3=d_3(n)= {\mathcal O}({\rm e}^{-\mu a_n}) \to 0 \asA a_n \to \iy.
 \ee
 Hence, the measure in \ef{evol10} can be made arbitrarily small. 
This allows an extra control of the time derivative  $v=w_\t$. It
follows from \ef{evol10} that, since the initial function in
\ef{evol2} is ``almost" stationary except at $x=0$, the only
singularity of $w_\t(x,\t)$ for $\t \approx 0^+$ occurs at the
origin. To show that it disappears  quickly,
 consider a parabolic equation for $w_\t$
by differentiating \ef{evol2} in $\t$:
 \be
 \label{vvt1}
 \tex{
  {v_\t}=-{v_{xxxx}} - v + 2w(x,\t)v \,\,\, \mbox{in} \,\,\, \re \times \re_+, \quad v(x,0)= -2d_3 \d(x)
  \inB \re.
 }
 \ee
 Recall that according to \ef{evol11}, the initial measure can be
 made arbitrarily small for $n \gg 1$.
 Since by the assumption $|w(x,\t)| \le C$, as an elementary scaling argument shows, the behaviour
 of the solution to \ef{vvt1} in a neighbourhood of $(x,\tau)=(0,0^+)$ is
 governed by the bi-harmonic operator in \ef{vvt1}, that gives the
 fundamental solution of  $D_t+D_x^4$:
  \be
  \label{Fu21}
  \tex{
  w_\t(x,\t)=v(x,\t) \approx -2d_3(a_n) \t^{- 1/4}b_0(y),
  \quad y=\frac x{\t^{ 1/4}},
  }
  \ee
  where $b_0(y)$ is the fundamental kernel. It follows that the
  singularity  disappears in time
   \be
   \label{Fh22}
    \tex{
    w_\t(0,\t_n) \sim 1 \LongA \t_k \sim d_3^4(a_n) \ll 1 \forA n
    \gg 1,
    }
    \ee
    and until then is concentrated on small intervals $\{|x| \le  C \t^{1/4}\}$.
Overall, this means that a weak singularity occurring at $(x,\tau)=(0,0^+)$
in the problem \ef{evol2} disappears in a sufficiently fast manner, 
leading to a standard problem on the $\omega$-limits for a
parabolic gradient system discussed above. However, we cannot justify 
that a required stationary solution eventually remains  very close
to $\hat F(x)$.

\subsection{A discussion around the countability of solutions}

Obviously, in view of the analytic nature of \ef{N1.1} and many other equations under consideration with 
 defect indices of linear operators $(2,2)$ in $\re_\pm$ ($(m,m)$ for proper $2m$th-order operators) the patterns subset cannot be more than countable. Note that in the nonlinear operator theory the fact that fixed points are always isolated 
does not require analyticity and a some regularity is usually enough.

As usual, in view of further applications to non-smooth and non-variational ODEs, we cannot rely on any results of the advanced Hamiltonian DSs theory.
However, for convenience, we summarize some preliminary discussions on this
subject and present some new comments. In fact, this conclusion is
a rather straightforward for analytic equations. Indeed, since by
the expansion \ef{full1} of arbitrary proper solutions it is
required by shooting to match a stable (as $x \to - \iy$) 2D
manifold in \ef{full1} with some nontrivial pair $\{C_3,C_4\}$ with
the stable (as $x \to +\iy$) 2D manifold in \ef{full1}, where
another nontrivial pair $\{C_1,C_2\}$ is required. Other unstable
2D manifolds must be vanished  as $x \to \pm \iy$ but of course play
a necessary role in the matching in between. In fact, the shooting
has the main role of fully destroying the 2D unstable counterpart.

Going back to the expansion \ef{full1}, where
 we denote the expansion coefficients by $C_l^\pm$, $l=1,2,3,4$,
we note that it means that those appear in the corresponding asymptotics as $x
\to \pm \iy$.
 Therefore, in what
follows, by shooting from the left-hand side (i.e. from $x=-\iy$)
with a pair $\{C_3^-,C_4^-\}$ in \ef{full1} (the rest of $C^-$'s
are zero), in order to create a proper solution $F(x)$ in $\re$,
we require the unstable manifold as $x \to +\iy$ to vanish. This
leads to a couple of analytic algebraic equations
\be
\label{exp9} \left\{
\begin{matrix}
 C_3^+(C_3^-,C_4^-)=0,
 \\
  C_4^+(C_3^-,C_4^-)=0.
  \end{matrix}
  \right.
 \ee

Thanks to our matching/gluing patterns construction we are able to state the following result.
\begin{proposition}
 \label{Pr.Anal1}
The ODE problem $(\ref{N1.1})$
 admits at most a discrete countable family of different  solutions.
  \end{proposition}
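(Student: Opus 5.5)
Modulo translations, every solution is coded by a single real parameter $B$, and the set of admissible $B$'s is the zero set of a nonconstant real-analytic function; such a zero set has no accumulation points and is therefore at most countable.

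\emph{Step 1: parametrization.} By Proposition \ref{TwoFund}, after translating in $x$ each nontrivial solution has the left tail \ef{OneP1} with some $B\ne0$. The local solution $\hat F_B$ on $(-\iy,L]$ is given by an expansion convergent and analytic in $B$. I would build it as a fixed point of the integral equation
\[
F=F_B+\mathcal K(F^2),
\]
where $\mathcal K$ is the variation-of-constants operator for $D_x^4+I$ on $(-\iy,L]$, chosen to kill the modes ${\rm e}^{\mu x}\cos$, ${\rm e}^{\mu x}\sin$ and the growing ones as $x\to-\iy$. In the weighted space with weight ${\rm e}^{-\mu x}$ this map is a contraction analytic in $B$. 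Restricting to a fundamental domain $B\in[B_0,B_0{\rm e}^{2\pi})$, and its negative counterpart, loses nothing by \ef{Inv19}. The map $B\mapsto(\hat F_B,\hat F_B',\hat F_B'',\hat F_B''')(L)$ is then real-analytic, and by analytic dependence of the Cauchy problem it stays analytic for $x\in[L,X]$ as long as no blow-up occurs, which is an open condition on $B$.

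\emph{Step 2: the shooting equations.} A global decaying solution requires the unstable coefficients at $+\iy$ to vanish, i.e. \ef{exp9}. Near any homoclinic the $+\iy$ stable manifold of the origin is an analytic 2D manifold, the stable-manifold theorem for an analytic hyperbolic fixed point applying since all eigenvalues have real part $\pm\mu\ne0$. Hence $C_3^+(B)$ and $C_4^+(B)$ are real-analytic functions on the open set $\mathcal U$ of $B$'s whose trajectory enters a fixed small neighbourhood of O; equivalently, one takes the distance to $W^s({\rm O})$ in a local analytic chart. Solutions correspond exactly to common zeros in $\mathcal U$.

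\emph{Step 3: isolatedness, the main obstacle.} I need that $C_3^+$ (or $C_4^+$) does not vanish identically on any component interval of $\mathcal U$. If it did, a whole interval of $B$'s would give homoclinics, a continuum of distinct solutions not related by translation. By Proposition \ref{TwoFund}(i) each pattern is isolated in $C(\re)$; that is exactly what is needed, and the analyticity of the functional $\Phi$ in \ef{F01} restricted to such a curve reinforces it. Alternatively, one can argue via \ef{F22}: along a curve of critical points $\Phi$ is constant, but the tail $B\mapsto \hat F_B$ with $B\to B{\rm e}^{2\pi}$ traverses only translations, so the curve would be closed and I would expect a contradiction with the non-degeneracy of $\Phi''$ at $F=0$ near the tail. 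I regard making this rigorous as the hardest step and would rely primarily on the isolatedness of Proposition \ref{TwoFund}(i).

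\emph{Step 4: conclusion.} Each component interval of $\mathcal U$ then contains only isolated zeros, and $\mathcal U$ has countably many components. Therefore the set of admissible $B$, modulo the scaling $B\mapsto B{\rm e}^{2\pi}$ that corresponds to translation, is discrete in $\mathcal U$ and at most countable. Accumulation can occur only at $\partial\mathcal U$, i.e. toward blow-up or infinite width, consistent with the remark after Proposition \ref{TwoFund}. This gives the at most discrete countable family.
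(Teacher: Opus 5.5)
Your route is the paper's. You freeze translations by the tail parameter $B$ of \eqref{OneP1}, encode the homoclinic condition in the analytic shooting system \eqref{exp9}, and use the fact that a real-analytic function of one variable that is not identically zero has a discrete, hence countable, zero set.

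Steps 1, 2 and 4 add detail the paper omits and are essentially fine. Step 2 needs one repair: index the shooting function by the number of passages near ${\rm O}$, for instance by taking the unstable coordinate at the $k$-th crossing of a fixed section of $W^s_{\rm loc}({\rm O})$. This is needed because multi-hump patterns such as $F_{+2,l,+2}$ re-enter any fixed neighbourhood of ${\rm O}$ before their final approach. A countable union of countable zero sets is harmless.

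The genuine gap is Step 3, and it carries the whole content of the proposition.

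\begin{itemize}
\item Proposition \ref{TwoFund}(i) does not close it. Its proof only shows that a fixed $B$ determines a unique solution, i.e. injectivity of $B\mapsto\hat F_B$. That argument would go through verbatim if an entire interval of $B$'s produced homoclinics. Read in the stronger sense you need (no other pattern nearby in $C(\re)$), it is a restatement of what you are trying to prove.
\item Analyticity alone cannot exclude identical vanishing of the shooting function. Take a focus--focus equilibrium of an integrable analytic Hamiltonian system with two degrees of freedom, which is the local situation of ${\rm O}$ here, with eigenvalues $\pm\mu\pm{\rm i}\mu$. There $W^u({\rm O})$ and $W^s({\rm O})$ coincide and the homoclinics form a continuum.
\item Your $\Phi$-based alternative fails as well. $\Phi''(0)>0$ carries no information about the pattern, and an interval of homoclinic $B$'s need not close up under $B\mapsto B{\rm e}^{2\pi}$.
\end{itemize}

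What is missing is a non-degeneracy input. One form: at every homoclinic $F$, the $L^2(\re)$-kernel of $\Phi''(F)=D_x^4+I-2F$ is spanned by $F'$ alone. Equivalently, $W^u({\rm O})$ and $W^s({\rm O})$ meet transversally inside the zero level of the first integral $F'F'''-\frac12(F'')^2+\frac12F^2-\frac13F^3$ behind \eqref{DS1}. On that level set \eqref{exp9} collapses to a single equation in $B$, and a weaker sufficient condition is that this single analytic function vanishes identically on no component of $\mathcal U$.

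The paper does not supply this either; it rests the proposition on the same appeal to analyticity and the $(2,2)$ structure. So your proposal is as complete as the paper's argument, but the step you flag is an unproved assumption, not a technicality.
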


 It is also natural  to suggest that  any limiting point of this countable patterns subset
 should be
  at
 infinity in sense that their possible  infinite number cannot be
 somehow ``concentrated" on a compact interval in $\re$.
 Otherwise, those patterns must eventually grow without bound in
 $L^\iy$ close to the limiting point, but this  contradicts a
 clear blow-up character of solutions with big initial data on a
 bounded interval. Proving that assumes a more detailed and
 involved study of blow-up for the parabolic flow \ef{evol2},
which is not our goal here.

 Recall that for the translationally invariant ODE \ef{N1.1} the
  dimensions of manifolds can be reduced to one. In addition to
  that,
 since due to the fact that \ef{N1.1} is autonomous, it can be reduced to a
 third-order  ODE, but this does not help.

Let us also mention that some classical techniques do not apply
here. For instance (as a naive suggestion) if for the linear operator
 $$
 {\mathcal M} F=F^{(4)}+F \quad ({\rm ker}\, \mathcal M=\{0\}),
 $$
 using a proper Green's function, we rewrite the \ef{N1.1} problem
in the integral form 
 \be
 \label{JJ1}
  F= {\mathcal M}^{-1} F^2,
  \ee
  applications of various fixed point theorems and/or Schauder-type
  compact operator theory, homotopic vector fields, rotations/degree theory, {\em etc.,}  do not guarantee any result in view of
  a possible huge variety of patterns already observed. In any
  case, applications of standard classical techniques do not
  prevent obtaining the already known (and probably the most
  stable) basic variational solution $F_0(x)$, or even the trivial
  one $F \equiv 0$.
  Therefore, a delicate almost ``local" (actually, global)
  analysis of possible points of gluing/matching of patterns seems
  unavoidable to reveal a whole abundance of the set of
  solutions.

\ssk

Note also that, formally, the above proposition does not guarantee that
the set of nontrivial solutions is not empty.
Fortunately, this is not the case for the most of such problems {\color{blue} similar to \eqref{JJ1}} but, indeed, this can happen for some
specially designed equations. Of course a general possibility of
such a solvability crucially depends on the structure of
stable/unstable manifolds available on the plane of the equation.

\subsection{1D stability subspace of the equilibria $F =1$}

Consider the behaviour close to the nontrivial
steady state
 $
 F_* \equiv 1
 $
  for the ODE in \ef{N1.1},
 By linearization, we have
  \be 
  \label{111}
  \begin{matrix}
  F=1+Y \LongA Y^{(4)}= Y 
 \LongA
  Y(x) = {\mathrm e}^{\l x}
 \\
   \LongA \l^4=1 \LongA \l^2 = \pm 1 \LongA \l \in \{\pm 1,\pm{\mathrm i}\}.
 \end{matrix}
  \end{equation}
This means that in both directions $x \to \pm \iy$, there exist a
1D stable manifold and a 3D unstable. Therefore, to pose a
condition for  \ef{N1.1} like
 $$
 F(x) \to 1 \,\, \mbox{as} \,\, x \to \pm  \iy
  $$
  makes no sense since such patterns $F(x) \to 1$ (instead of $0$) as $x \to \pm \infty$ do not exists except $F(x) \equiv 0$. Moreover, 
  we will prove that even a heteroclinic $\{0\} \to \{1\}$ does not exist. Indeed, the same shooting from $-\iy$ by using a one-parametric expansion
  $$
  F(x) = 1 +C_1^- \eee^x +... \asA x \to -\iy, \,\,\, C_1^- \in
  \re,
   $$
  to vanish the 3D unstable manifold
  \be 
  \label{112}
 F(x)=1+C_1^+ \eee^x + C_2^+ \cos (x+\varphi_1)
 + C_4^+ \eee^{-x}+..., \quad t \to +\iy
  \end{equation}
 would lead to an overdetermined system of two
 algebraic analytic
 equations (like \ef{exp9}) with a single unknown $C_1^-$:
 \be
 \label{PrNN}
 \left\{
\begin{matrix}
 C_1^+(C_1^-)=0, 
 \\
  C_2^+(C_1^-)=0.
 \end{matrix}
  \right.
 \ee
 In general,
it is not easy  to find an ODE with the right-hand side of the type of
that in \ef{N1.1}, for which such a problem \ef{PrNN} has a
solution (this problem is doable but not that interesting or challenging).

\subsection{Finite $x$-blow-up of the ODE trajectories}

As rather usual in such semilinear ODE problems, a general
geometry of stable/unstable manifolds essentially can change when
the trajectories can disappear in a blow-up process at a finite,
say, $x_0^+$ and appear again (maybe, in a different manner) at
$x_0^+$.
We show that this is precisely the case for \ef{N1.1}, where in
view of the even fourth derivative operator, the mechanism of post
blow-up appearing of orbits remain practically the same. We also
show that disappearance/appearance of orbits at any finite blow-up
point does not change a positive dominant tendency of geometry of manifolds, which is
necessary for the solvability. More or less, finite $x$ blow-up is
no different from a natural exit/appearance of unbounded orbits at
$x = \pm \iy$.

Anyway, we need to consider such a blow-up in a greater detail. It
follows from \ef{N1.1} that, assuming now for convenience shooting
from the right-hand side, blow-up at $x=0^+$ happens according to
the equation
 \be
 \label{BB1}
 F^{(4)}=F^2, \quad F(x) \to \iy \asA x \to 0^+.
  \ee
  The real blow-up ``envelope" is easily obtained explicitly:
  \be
  \label{BB2}
  F_{\rm e}(x)= C_0 x^{-4} \ge 0, \quad C_0= 840.
  \ee
The linearized analysis is also straightforward (but not that
convincing, see below):
 \be
 \label{BB3}
 F(x)= F_{\rm e}(x)(1+Y(x)) \LongA (x^{-4} Y)^{(4)}= 1680 x^{-8} Y.
  \ee
For this Euler's-type equation, the characteristic equation is
   \be
   \label{BB3NN}
    Y(x) = A x^\a \LongA (\a-4)(\a-5)(\a-6)(\a-7)= 1680.
    \ee
    Clearly, this polynomial has  two real
   roots
    \be
    \label{BB4}
    \a_+ >7, \quad \a_0=0
    \ee
    and two complex ones. The positive root $\a_+>7$ corresponds to
    a 1D stable manifold of orbits blowing up according to
    \ef{BB2}. It seems that  the unstable linearized  manifold with $\a_0=0$
    is  possibly connected 
    with 
    orbits blowing up at some $x \not =0$, i.e., corresponds to the $x$-translational symmetry of the ODE \ef{BB1}.
    Possible  other ones as the manifolds consisting of some
     oscillatory functions corresponding to  complex-valued  $\a$'s require a further
      delicate nonlinear analysis.

     Namely, we perform in \ef{BB2} a ``blow-up" change of variables
      \be
      \label{BB5}
      F(x)={\mathrm e}^{-4 s} \varphi(s), \quad s=\ln x \to - \iy \asA
 x \to 0^+, 
 \ee
 where $\varphi(s)$ satisfies the following semilinear ODE:
  \be
  \label{BB6}
  L(\varphi)\equiv \varphi^{(4)} -22 \varphi''' + 179 \varphi'' - 638 \varphi' +
  840 \varphi-\varphi^2 =0 \forA s \ll -1.
  \ee
  In this form, it is easier to study the stability of its steady
  solutions, which now take the form
   \be
   \label{BB7}
    \varphi_1(s) \equiv C_0=  840 \andA \varphi_0=0,
    \ee
    where the first one is responsible for the stability of the
    envelope \ef{BB2}. Beyond those easy linearized stability
    approaches, there appears a nonlinear one concerning existence
    of a nontrivial oscillatory-like {\em periodic} solution $\varphi_*(s)$ of
    \ef{BB6} which is called the {\em oscillatory component}.

    First numerical attempts to solve \ef{BB6}
 show a huge instability as $s \to + \iy$ again with a fast
 blow-up according to
 \be 
 \label{Bl12}
\varphi^{(4)}=\varphi^2 (1+o(1)) \asA s \to s_0^- \,\, (\varphi(s) \to \iy).
 \ee
 As $s \to -\iy$, the behaviour is less unstable though blow-up
 according to \ef{Bl12} is also available. Actually, these numerics confirm the fact that
  such a nontrivial periodic solution $\varphi_*$  does not exists) and blow-up always occurs in a monotone style of \ef{BB2} and along its stable manifold. We will prove that later on that the blow-up behaviour in $\{F> \frac 32\}$ is always monotone for those orbits we are interested in.

Overall, we claim that the possible blow-up of local solutions of
\ef{N1.1} does not affect a general solvability of our ODE and cannot
 diminish the overall variety of solutions. Indeed. why cares what happens to semi-orbits which are not homoclinics, do these blow-up or just remain large enough away from a homoclinic range? 
One can see  that in the ODE
\ef{N1.1} blow-up via the {\em positive} mechanism in \ef{BB1} can occur if
 $$
 F^2-F \quad \mbox{is sufficiently positive, so that for large $F> 1$ (guaranteed for  $F > \frac 32$)}.
 $$

\section{A DS view: pattern-homoclinics, two basic and infinitely many other periodic and chaotic orbits,  an unstable  ``blow-up"  $W^{2,\iy}$-attractor in $\re^4$}
\label{SAttr}

At the moment, after achieving some better understanding of  the countable variety of patterns, we can try to use a standard 
approach to such a 4th-order  dynamical system (DS) and describe a whole geometric structure in $\re^4$ of orbits of our DS. Indeed, using standard DS theory we show the existence of two periodic orbits 
$\Gamma_{\rm max}$ and $\Gamma_{\rm min}$, which might be constructed from our gluing/matching of patterns technique, showing as well that there many other orbits in between those two. 



\subsection{A standard representation of a bounded  unstable ``blow-up"  attractor with two leading and infinitely many other periodic orbits}
We deal with a simple looking 4D DS: denoting $(F,F',F'',F''')^{\rm T}=(v_1,v_2,v_3,v_4)^{\rm T}=V$ we have
 \be 
 \label{DS9}
 F^{(4)}=-F+F^2 \,\,\, \,\, \LongA
 \left\{
 \begin{matrix}
v_1'=v_2, \qquad \quad  \\
v_2'=v_3, \qquad \quad  \\
v_3'=v_4, \qquad \quad \\
v_4'=-v_1+v_1^2,
\end{matrix} 
\right.
 \ee  
 and we begin to discuss its simple geometry on the basis of our previous results:

\begin{enumerate} 
\item[{\rm (i)}] Each pattern $F_\s(x)$ of any complexity finite index $\s$ in the admissible domain $\{F=v_1 \le \frac 32\}$ (see the next proposition) becomes a  {\em homoclinic} of the origin $O=(0,0,0,0)$. 
Let us recall again that  a single finite index  $\s$ cannot cover all the peculiarities 
of patterns and orbits, {\em etc}, but successfully describe some their key features important for a visual presentation.

\item[{\rm (ii)}] Using a fruitful comparison with the 2-wings (periodic) structure of Lorentz's classic strange attractor \cite{Lor63}, we next need to identify all periodic solutions of \ef{DS9} 
and their stability pattern subset. Periodic orbits  are not homoclinics though we always can find an orbit which  can approach O as close as necessary.
\end{enumerate}
We first claim that:
 $$
 \mbox{there exist (again) {\bf two} key main periodic orbits denoted by $\Gamma_{\rm max}$ and $\Gamma_{\rm min}$}.
 $$
The first main one $\Gamma_{\rm max}$ is the naturally largest periodic orbit  composed of positively dominant single hump waves 
  (a slightly negative only) $\{\hat F_0(x)\}^{x \in \re}$, i.e.,  consisting of the infinite series of single  humps  $\hat F_0 \approx F_0$ 
 continuously distributed over $\re$.
As we know, it is constructed via most densely $x$-compressed patterns $\hat F_0$  which cover the whole $x$-axis.
Note that by itself $\hat F_0(x)$ does not have exponential tail connection with $x= \iy$ and, unlike $F_0(x)$,  is not a pattern.
The  basic patterns $\{F_k\}_{k \ge 0}$ composed from $k+1$ neighbouring ones being close to the 
first variational pattern $\approx F_0(x)$ (again mostly densely compressed with a minimal 2 transversal zeros in between).
So that those are $\sim$ any finite pieces of $\Gamma_{\rm max}$.
Thus, the first periodic orbit $\Gamma_{\rm max}$  can actually be classified as $\{F_{\iy}(x)\}$ in $\re^4$ and is given by a single wave periodic pattern  $\hat F_0(x)$, $x \in [-\frac {\hat T}2,\frac {\hat T}2]$:
 \be 
 \label{DS7} 
 \tex{
\Gamma_{\rm max}=\{(v_1= \hat F_0(x),\, v_2=\hat F_0'(x), \, v_3= \hat F_0''(x), \, v_4=\hat F_0'''(x))^{\rm T}\}.
 }
 \ee 
 Any  pattern orbit starting with an infinite number of rotations around the origin, as $x \to - \iy$,
  according to a typical elliptical  projection on the $\{F,F'\}$-plane:
   \be 
   \label{sp2}
  \tex{
  \big(F-\frac 1{\sqrt 2} F'\big)^2 +  \frac 12 \, F^2= \frac 12 B^2 \eee^{2\mu x} + O(\eee^{3 \mu x}), \quad x \to -\iy,
  }
   \ee
describing a spiral out behaviour with the clock-wise rotation from $x=-\infty$.   
After \ef{sp2} the homoclinic partially approaches  $\Gamma_{\rm max}$ (or others) and can rotate near $\Gamma_{\rm max}$ an arbitrary number of times before 
 returning to O according again to \ef{sp2} with now $x \to +\iy$ and $\mu \mapsto - \mu$. On the other hand, after a finite spiral in rotations via \ef{sp2} it can return to $\Gamma_{\rm max}$ again and again and ends up with an infinite number of converging rotations around the origin according to \ef{sp2}, i.e.,  the exponential tail behaviour as $x \to +\iy$.

 \ssk
 
The second key periodic orbit in $\re^4$ denoted by  $\Gamma_{\rm min}$ exists creating the second family $\{F_{+2l}\}_{l \ge 2}$, generated by a double hump  positive hump $\hat F_{+4}$, with the same homoclinic properties. 
Note that the characteristic equation \ef{111} with $\l= \pm {\rm i}$ and the linearized periodic patterns \ef{112}
with pure $\cos x$ somehow indicate existence of a positive periodic solution about $F \equiv 1$ and even the linearized period $2\pi$ is close to the actual one. We do not prove the existence of such a solution though it is crucial for forming of the second wing of the attractor. Recall that for the corresponding  PLP (\ref{PL111}) this solution is elementary:
 $$
 \tex{
 \Gamma_{\rm min}: \quad F_{\rm min}(x)= 1 + \frac 12 \cos x.
 }
 $$ 
$\Gamma_{\rm max}$ for the PLP can be also calculated algebraically.

 Thus, 
 $\Gamma_{\rm min}$ corresponds to the periodic orbit $F_{+2l}|_{l=\iy}(x)$, and is  a strictly positive periodic solution oscillating about $F=1$ (geometrically, around $F \sim \frac 74$). 
$\Gamma_{\rm min}$ has a similar accompanying  subset of homoclinics, which starting and finishing with the spirals like \ef{sp2} can make any number of $x$=disjoint rotations around it as many times as we want with shorter or longer asymptotic tails in between.

A schematic picture of key periodic orbits  $\Gamma_{\rm min/max}$ is shown in Fig. \ref{FigW2}; note that even in $\re^4$
$\Gamma_{\rm min}$ is actually ``embedded" inside $\Gamma_{\rm max}$.

\begin{figure}[htbp]
\begin{center}
\includegraphics[scale=0.32]{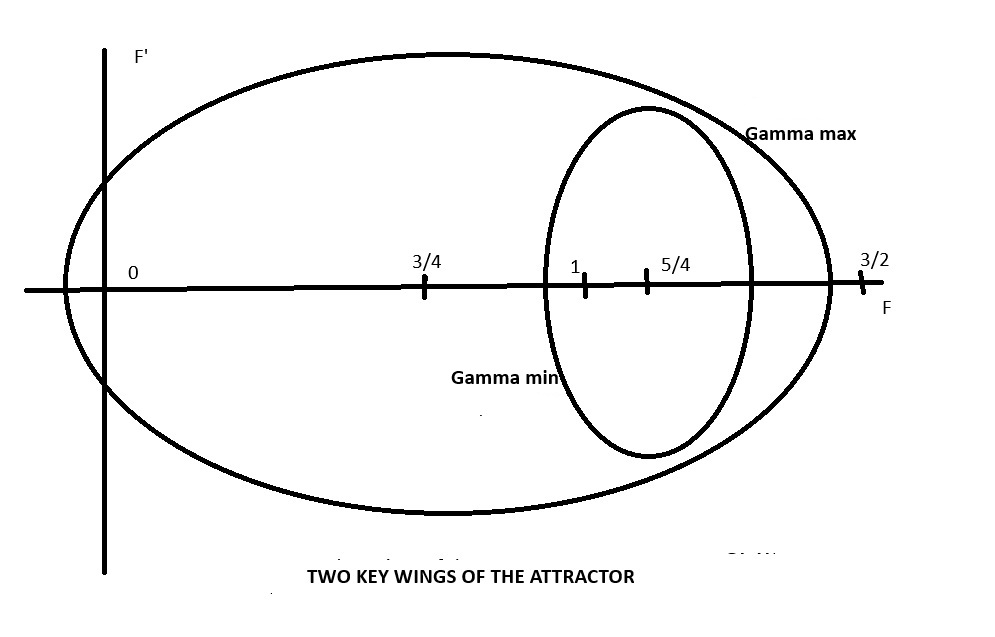}
\caption{Two key wings $\Gamma_{\rm max/min}$ of the attractor $W^{2,\infty}$ (the quadratic ODE).}
 \label{FigW2}
\end{center}
\end{figure}

\ssk

A further countable number of periodic orbits $\{\Gamma_\s\}$ can be formed from any given pattern $F_\s(x)$ with any finite index $\s$ being periodically extended over $x \in \re$, denoted by $\Gamma_\s$. Each one has its ``stable" subset of pattern orbits which can mimic and arbitrarily close approach any sufficiently long finite part of their periodic structure in $x \in \re$. Roughly speaking, all those periodic orbits $\Gamma_\s$ composed from various pieces of two above key periodic $\Gamma_{\rm max/min}$ with an arbitrarily large ``discrete" (via zeros of tails $\sim \cos(\frac x{\sqrt 2})$) distances.

\ssk

Projecting for simplicity this variety of periodic orbits and homoclinics on the $(F,F')=(v_1,v_2)$-plane 
we have that:
 
 \begin{itemize}
 \item $\Gamma_{\rm max}$ represents a deformed circle/ellipse geometrically centered at $\sim (\frac 34,0)$ of the
 radius $\hat  R \sim \frac 34$ (the actual oscillations of $\hat F_0(x)$ is about the steady state $F \equiv 1$). It has a small $F$-negative part. Rotations are clock-wise as all the others orbits/patterns if we start shooting from $x=-\iy$.
 
 \item $\Gamma_{\rm min}$ then is smaller, $F>0$ and lies almost completely inside $\Gamma_{\rm max}$. It also has a circular/elliptic deformed form with a formal geometric center at $\sim (\frac 74,0)$ (the actual oscillations are again about $F \equiv 1$).

\item other periodic orbits as invariant sets fill the space between those two key $\Gamma_{\rm max/min}$ and partially inside 
$\Gamma_{\rm min}$. Recall that some periodic orbits can infinitely many times approach O as close as possible (but do not converge to O).

\item the rest of this projected 2D phase-space is partially covered by a countable number of our pattern homoclinics of O.
\end{itemize}

Projections on other planes (say on $\{F',F''\}$) look similar, simpler, and more symmetric.
In $\re^4$, this plane structure will get an extra 2D volume but structurally remains geometrically the same.

\ssk

Thus we observe:
\begin{enumerate}
\item[{\rm (a)}]  the attractor in $\re^4$ has two key special  periodic orbits $\Gamma_{\rm max/min}$ somehow embedded, but overall
contains  an {\em infinite countable subset of invariant periodic wings},
which we denote by $W^{2,\iy}$-one (so that, surprisingly, $2$ as in the Lorentz's one and $\iy$ as the actual number of periodic orbits involved. Though the Lorentz's attractor can also have infinitely many periodic orbits);

\item[{\rm (b)}] patterns $F_\s$ create a countable subset of various homoclinics of O partially approaching any finite pieces (a collection of humps) of those periodic orbits $\Gamma_\s$;

\item[{\rm (c)}]  $W^{2,\iy}$ is unstable and, moreover, in a {\em blow-up} manner: other local orbits of a given  $F(x)$, starting at O ($x=-\iy$),  blow-up as $x \to x_F< \iy$, corresponding to $|V| \to \iy$ (see more  below);

\item[{\rm (d)}] there is an almost ``isomorphic" formal resemblance between this $\{\s\}$ geometric structure and  
real numbers on the interval $(0,1)$ according to:

\begin{itemize}
\item rational numbers correspond to periodic orbits in a 1-to-1 manner,
  
\item our patterns (homoclinics of O)
are simple rational numbers having a finite writing form $0.a_1a_2...a_{|\s|}00...$ via finite $\s$'s (and, say,  ``zeros" afterwards, though this is not necessary to ascribe);  
  
\item a  subset of the irrational numbers (say, transcendent) giving global solutions of \ef{DS9} which are not patterns and do not have any finite periodic structure inside;

\item the rest of non-transcendent irrationals  corresponds to principally local orbits blowing-up at finite $x$'s; by continuity, this is a true {\sc uncountable subset}. 
\end{itemize}
\end{enumerate}

\subsection{$W^{2,\iy}$-attractor: numerology}

One can reduce once the order of the ODE in \ef{DS9} by multiplying this divergent operator by $F'$ and integrating
  over $(-\iy,x)$ (a Hamiltonian property):
 \be 
 \label{DS1}
 \tex{
F^{(4)}=-F+F^2 \LongA 2F'F'''-(F'')^2=p_0(F) \equiv F^2(\frac 23 F-1),
 }
 \ee
 where the constant of integration is zero since we deal only with a 1D subset of orbits satisfying $F(-\iy)=F'(-\iy)=...=0$. According to the known expansion (then the invariant $x$-translational parameter cancelled)
  \be 
  \label{DS2}
  \tex{
  F(x)=B \eee^{\mu x} \cos (\mu x) + O(\eee^{2\mu x}) \asA x \to  - \iy, \,\,\,B \not=0; \,\,\,  \mu = \frac 1{\sqrt 2}.
   }
   \ee
   Note that for any pattern $F(x)$, $B \in [1,{\rm e}^{2 \pi})$, see Proposition \ref{TwoFund}.

 Next, for convenience, we present a  ``number theory" of $W^{2,\iy}$ in terms of the single parameter $B$ in \ef{DS2} generating a local or global orbit $F=F_B(x)$.
 
 \begin{proposition}
 \label{PrNum}

\begin{enumerate}
\item[{\rm (i)}] $\{B\}$ can be reduced to $B \in {\mathcal B}=(1,{\rm e}^{2\pi})$ plus $B=1$.

\item[{\rm (ii)}] ${\mathcal B}_{\rm hom}=\{B: \,\, F_B(x)\,\,\mbox{is a homoclinic of O}\} \subset {\mathcal B}$  is countable.

\item[{\rm (iii)}] ${\mathcal B}_{\rm glob}=\{B: \,\,F_B(x) \,\, \exists \,\,\mbox{in $\re$}\}=
{\mathcal B}_{\rm bound}=\{B: \,\,|F_B(x)| \le C \,\,\mbox{in} \,\,\re\}$.

\item[{\rm (iv)}] ${\mathcal B}_{\rm blow}=\{B: \,\, \mbox{$F_B(x)$ {blows up at some} $x=x_0^-<+\iy$}\}$  {is uncountable}. 

\item[{\rm (v})] ${\mathcal B}={\mathcal B}_{\rm glob} \cup {\mathcal B}_{\rm blow}$.
\item[{\rm (vi)}] The set ${\mathcal B}_{\rm blow}$ is a unity of a  sequence  of maximal disjoint
 open non-empty  intervals 
 $(a_j,b_j)$
  \be
  \label{Dis44}
 {\mathcal B}_{\rm blow}= \cup_{j=1}^\iy (a_j,b_j),
  \ee
  {such that} 
  \be 
  \label{Dis45}
[a_i,a_j]\cap [a_j,b_j]=\{\emptyset\} \,\,(\mbox{i.e.,} \,\,a_i \not = b_j) \,\, \mbox{for all} \,\,i \not = j \,\, \mbox{and}\,\,
 a_i,b_i \in {\mathcal B}_{\rm glob} \,\,\,\mbox{for all} \,\,\,i=1,2,...\, .
 \ee

 \item[{\rm (vii)}] ${\mathcal B}_{\rm glob}={\mathcal B}_{\rm hom} \cup {\mathcal B}_{\rm per} \cup {\mathcal B}_{chaot}$,
 where in the last two subsets orbits $F_B(x)$ with a periodic or chaotic (non-periodic) behaviour  as $x \to +\iy$ are meant.

 \item[{\rm (viii)}] { For any}
  $\hat B \in {\mathcal B}_{\rm per} \cup {\mathcal B}_{\rm chaot}$ {there exists a sequence} $\{B_j\}\subset {\mathcal B}_{\rm hom}$
   {such that} 
   $\{B_j\} \to \hat B$.
\end{enumerate}

 \end{proposition}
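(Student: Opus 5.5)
The plan is to work throughout with the single shooting parameter $B$ of the left tail \ef{DS2}, i.e. with the map $B \mapsto F_B(x)$ taking $B$ to the unique local solution on $(-\iy,L(B)]$ from Proposition \ref{TwoFund}(i). This map is analytic in $B$ uniformly on compact $x$-sets, and we continue $F_B$ to its maximal interval of existence $(-\iy,x_B)$.

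\textbf{Parts (i), (ii) and (v).}
\begin{enumerate}
\item[(i)] Use the translation identity \ef{Inv19}: $F_B(x+2\pi/\mu)=F_{B{\rm e}^{2\pi}}(x)$. This identifies $B$ with $B{\rm e}^{2\pi}$ up to an $x$-shift. Restricting to $B>0$ after the reflection $x\mapsto x+\pi/\mu$, which flips the sign of the leading term, the set of $B$'s is reduced to $[1,{\rm e}^{2\pi})$.
\item[(ii)] This follows directly from Proposition \ref{Pr.Anal1}. In terms of $B$: homoclinics are the zeros of the two analytic functions \ef{exp9}, $C_{3,4}^+(B)$, on the set where $F_B$ is global. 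Since $C_{3,4}^+$ do not vanish identically (there are non-homoclinic orbits, e.g. blow-up ones), these zeros are isolated in each interval of analyticity, hence countable.
\item[(v)] This is a tautology once we define ${\mathcal B}_{\rm glob}$ by $x_B=+\iy$.
\end{enumerate}

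\textbf{Part (iii).} I would use the first integral \ef{DS1}, $2F'F'''-(F'')^2=p_0(F)$. I would also use the blow-up analysis of the previous section: once $F>\frac32$, the term $F^2-F$ dominates and drives monotone blow-up along the envelope \ef{BB2}. For the reverse direction I would argue by a comparison/energy argument that a global solution cannot have $\sup F=+\iy$ without exceeding $\frac32$ at some point with $F'\ge0$, $F''\ge0$, $F'''\ge 0$, and that this configuration forces finite-$x$ blow-up. Large negative excursions must be excluded as well. For these I would use that $-F+F^2>0$ for $F<0$, which pushes $F^{(4)}$ positive and produces a turnaround into the positive region, after which the previous mechanism applies. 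This gives ${\mathcal B}_{\rm glob}={\mathcal B}_{\rm bound}$.

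\textbf{Parts (iv) and (vi).} The key fact is openness of ${\mathcal B}_{\rm blow}$, which follows from continuous dependence on $B$: if $F_{B_0}$ enters the monotone blow-up region $\{F>\frac32,F',F'',F'''>0\}$, so do nearby $F_B$. An open subset of $\re$ is a countable disjoint union of maximal open intervals $(a_j,b_j)$, whose endpoints lie in the complement, i.e. in ${\mathcal B}_{\rm glob}$. The disjointness of closures in \ef{Dis45} means that no common endpoint $a_i=b_j$ occurs. I would argue this via the geometric min/max-to-inflection mechanism \ef{Infl1}. Between two blow-up intervals the orbit must switch its exit side, so a whole global (homoclinic or periodic) orbit separates them. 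Uncountability then follows from non-emptiness: the numerics and Remark 2 show that blow-up orbits exist, and any non-empty open interval is uncountable.

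\textbf{Parts (vii) and (viii).} Part (vii) is a classification of the $\omega$-limit behaviour of bounded orbits: either convergence to O (homoclinic), periodic, or neither. For (viii), take $\hat B$ global non-homoclinic. Its orbit makes infinitely many near-returns to the tail region (periodic) or wanders indefinitely (chaotic). For a near-return at a large $x_k$, perturb $B$ near $\hat B$ and apply the gluing argument of steps (iv)--(vii) of the previous section at the min/max points of that return. This produces $B_k$ with $|B_k-\hat B|\to 0$ and $F_{B_k}$ a homoclinic.

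\textbf{Main obstacle.} The main obstacle is this last step. The min/max-to-inflection argument yields symmetric reflection points \ef{Pr2}, which give even patterns rather than true homoclinics of the given $B$. Making it rigorous requires a transversality/shadowing statement near O: near-returns must be close enough to the 2D stable manifold that a small $B$-change hits it exactly. I would try a one-parameter intermediate value argument on the sign of the unstable coefficient $C_3^+$ (with $C_4^+$ controlled by phase) along $B$-intervals between successive blow-up intervals from (vi). The delicate point is the non-contact condition \ef{Dis45} in (vi).
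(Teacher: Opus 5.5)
Your outline follows the paper's proof item by item: the translation identity \eqref{Inv19}, analyticity, the barrier $F\le\frac32$ from \eqref{DS1} plus blow-up, openness of ${\mathcal B}_{\rm blow}$ with the component decomposition, classification, and gluing. Several of your additions improve on it:
\begin{itemize}
\item the half-period shift $B\mapsto -B{\rm e}^{\pi}$ handles $B<0$;
\item openness via the forward-invariant region $\{F>\frac32,\,F',F'',F'''>0\}$ is sturdier than continuity of the blow-up point;
\item (v) is indeed just the maximal-solution dichotomy.
\end{itemize}

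However, the $B$-level mechanism you give for (ii) does not work. The coefficients $C^+_{3,4}$ in \eqref{exp9} are asymptotic coefficients at $x=+\iy$. They make sense only for orbits that stay small there, i.e.\ on ${\mathcal B}_{\rm hom}$ itself, so there are no ``intervals of analyticity'' in $B$. Indeed ${\mathcal B}_{\rm glob}$ may even have empty interior, as in the Example after the proof. Worse, (viii), which you also want, says ${\mathcal B}_{\rm hom}$ accumulates at every periodic or chaotic $\hat B$. By the identity theorem, a function analytic near $\hat B$ with zeros accumulating at $\hat B$ would vanish identically there, making $\hat B$ homoclinic.

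A working version stratifies by entry time. For $n=1,2,\dots$ let $Z_n$ be the set of $B$ with $F_B$ defined on $(-\iy,n]$ and $V_B(n)\in W^s_{\rm loc}(O)$. This is the zero set of a real-analytic function on an open set. It contains no interval, because a continuum of homoclinics would vary continuously in $C(\re)$ and contradict Proposition \ref{TwoFund}(i). The mere existence of blow-up orbits elsewhere does not exclude such an interval. Hence each $Z_n$ is countable, and so is ${\mathcal B}_{\rm hom}=\bigcup_n Z_n$. Alternatively, simply invoke Proposition \ref{Pr.Anal1}, as you first say.

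Two further steps fail.

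In (vi), the condition $a_i\ne b_j$ in \eqref{Dis45} does not follow from a ``switch of exit side.'' That would produce at most one separating global $B$, which is exactly the forbidden configuration $a_i=b_j$. What is needed is that ${\mathcal B}_{\rm glob}$ has no isolated points. This would follow from accumulation of ${\mathcal B}_{\rm hom}$ at every global $B$: (viii) for periodic or chaotic ones, and gluing at the tail extrema $x_k^\pm$, $k\to\iy$, for homoclinic ones. The paper's proof of (vi) only records openness and that the endpoints lie in ${\mathcal B}_{\rm glob}$.

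In (viii), the obstacle you name is not one. If $F_B'(x_B)=F_B'''(x_B)=0$, then $F_B(2x_B-x)$ solves the same ODE with the same Cauchy data at $x_B$. By uniqueness, $F_B$ is itself even about $x_B$ and is a homoclinic with that very $B$; no shadowing or transversality is required. The genuinely unjustified step is your claim that the orbit of $\hat B$ returns infinitely often to the tail region where the sign pattern \eqref{Pr4} of $F'''$ at extrema holds. An orbit asymptotic to $\Gamma_{\rm min}$, which is positive and oscillates about $F\equiv 1$, eventually stays away from O, so the min/max-to-inflection argument has nothing to start from. The paper's one-line ``by the matching/gluing procedure'' does not supply this either, so at this point your sketch and the paper's are equally incomplete.
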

 
 \begin{proof}
 \begin{enumerate}
\item[{\rm (i)}]  Proposition \ref{TwoFund}.

 \item[{\rm (ii)}] Thanks to the analyticity of the flow and $(2,2)$ deficiency indices of the linear operator.
  
   \item[{\rm (iii)}]  Since $F_B(x) \le \frac 32$ the behaviour $F_B(x) \to -\iy$, along a sequence $\{x_j\} \to + \iy$, with the corresponding  controlled growth of  derivatives, is forbidden by the equation
$$F^{(4)} =-F + F^2 \to +\iy,\quad \hbox{for}\quad x <x_j,$$ 
 guaranteeing fast blow-up for large $j$.

    \item[{\rm (iv)}] 
 If $F_{\hat B}(x)$ blows-up as $x \to x_0^-(\hat B)$ then by the continuous dependence of $x_0(B)$ on the parameter  $B$   blowing-up happens for all $B \approx \hat B$.
 
     \item[{\rm (v)}] By {\rm (iii)} and {\rm (iv)}.

      \item[{\rm (vi)}] By definition, ${\mathcal B}_{\rm blow}$ is open. Since each $(a_j,b_j)$ is maximal, the end points $a_j, b_j \not \in {\mathcal B}_{\rm blow}$ and hence belong to ${\mathcal B}_{\rm glob}$ by {\rm (v)}. 
       
       \item[{\rm (vii)}] Just a classification.
       
       \item[{\rm (viii)}] By the matching/gluing procedure.
       \end{enumerate}
       \end{proof}
 
{\bf Example.} Consider a random covering of the unite interval ${\mathcal B}=(0,1)$ by a sequence $\{(a_j,b_j)\}_{j=1}^\iy$ of open intervals with the property \ef{Dis45} and such that for ${\mathcal B}_{\rm blow}$ given by  \ef{Dis44} 
$$ 
 \tex{
 {\rm meas}\,{\mathcal B}_{\rm blow}=
\sum_{j=1}^ \iy (b_j-a_j)= {\rm meas}\,{\mathcal B}=1.
 }
 $$
 Then {\rm (viii)} holds  for any $\hat B \in {\mathcal B} \setminus {\mathcal B}_{\rm blow}$.
 
 \smallskip

As usual, the attractor $W^{2,\iy}$   is indeed a ``strange" one in the sense there exist homoclinic orbits with an arbitrary number of rotations around $\Gamma_{\s}$ and the origin. 
  A sequence of such successive rotations and its length are also arbitrary, and this eventually  creates chaotic orbits. Note that with our previous results in hand, we can guarantee that a homoclinic with  any prescribed finite number and sequences of rotations {\em actually exist}, so that we now precisely are familiar with  the internal structure of such a chaotic object.


\subsection{Reducing the ODE order  and the admissible subset $\{F \le \frac 32\}$}

\begin{proposition}
\label{PrDS1}
{\rm (i)} Any local orbit starting at $x=-\iy$
 crossing for the first time
 the level $\{F= \frac 32\}$ at a finite $x=x_0$ cannot return to the admissible subset $D_{\rm hom}=\{F \le \frac 32\}$.

{\rm (ii)} A heteroclinic orbit connecting equilibria $\{0\} \to \{1\}$ does not exist.

\end{proposition}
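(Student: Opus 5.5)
Both parts will be derived from the first integral \ef{DS1}. For orbits with $F(-\iy)=F'(-\iy)=\dots=0$ it reads
$$
2F'F'''-(F'')^2=p_0(F)\equiv F^2\big(\tex{\frac 23} F-1\big),
$$
and $p_0(F)>0$ exactly for $F>\frac 32$, with $p_0(\frac 32)=0$. This identity is the key restriction, since it holds along the whole existence interval of any such orbit.

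For (i), let $x_0$ be the first point with $F(x_0)=\frac 32$, so $F'(x_0)\ge 0$. The goal is to show that $F'>0$ for all later $x$ in the existence interval. Suppose instead that $x_1>x_0$ is the first point after $x_0$ with $F'(x_1)=0$ while $F>\frac 32$ on $(x_0,x_1]$. Evaluating the first integral at $x_1$ gives $-(F''(x_1))^2=p_0(F(x_1))>0$, which is a contradiction. So as long as $F>\frac 32$, the orbit has no critical point, hence $F$ is strictly increasing and can never come back down to the level $\frac 32$.

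The only delicate point is the start, where possibly $F'(x_0)=0$. There the identity gives $F''(x_0)=0$, since $p_0(\frac 32)=0$. Then $F^{(4)}(x_0)=-\frac 32+\frac 94>0$ together with the sign of $F'''(x_0)$ decides the local behaviour. If $F'''(x_0)\ne0$, the orbit must cross upward, because $x_0$ is a first crossing from below, so $F'''(x_0)>0$. If $F'''(x_0)=0$ as well, then $F-\frac 32$ is locally of fourth order with positive coefficient, so this is a touching, not a crossing, and it is excluded by the word \emph{crossing}. Either way, immediately after $x_0$ we have $F>\frac 32$ and $F'>0$, and the argument above applies. As a by-product, $F^{(4)}=F(F-1)>0$ and $F$ is monotone increasing, which combined with the blow-up analysis \ef{BB1}--\ef{BB2} yields the monotone blow-up claimed earlier. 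That step is optional.

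For (ii), assume a heteroclinic $F(x)\to 1$ as $x\to+\iy$ with $F(-\iy)=0$. Since $F_*=1$ is hyperbolic only partially (see \ef{111}), I first need that all derivatives tend to $0$ along the orbit. I would obtain this from convergence onto the 1D stable manifold of $(1,0,0,0)$, or more simply by interpolation: $F$ is bounded and $F^{(4)}=-F+F^2$ is bounded, so $F',F'',F'''$ are bounded, and since $F\to1$, Landau-type inequalities give $F',F''\to0$. Handling $F'''$ is the main obstacle. Because $F'''$ enters the first integral only through the product $F'F'''$, I only need $F'F'''\to0$. This follows from the boundedness of $F'''$ and $F'\to0$, both obtained from the linearized expansion \ef{112} restricted to the stable direction $C_4^+\eee^{-x}$. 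Passing to the limit in the first integral then gives $0=p_0(1)=-\frac 13$, a contradiction, so no heteroclinic $\{0\}\to\{1\}$ exists.
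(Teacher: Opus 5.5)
Your proposal is correct and follows the same route as the paper: the zero-constant first integral \eqref{DS1} forbids critical points of $F$ in $\{F>\frac32\}$, so after the first crossing $F$ is strictly increasing, and in (ii) letting $x\to+\infty$ in \eqref{DS1} gives $0=p_0(1)=-\frac13$. You add details the paper leaves implicit. One is the degenerate start $F'(x_0)=0$; there the subcase $F'''(x_0)=0$ is in fact impossible, since it would force $F>\frac32$ just before $x_0$. The other is the decay of $F',F'',F'''$, which Landau--Kolmogorov interpolation alone already yields, so the appeal to the stable direction of the non-hyperbolic equilibrium $F=1$ is unnecessary.
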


\begin{proof}
 
{\rm (i)} By \ef{DS1}  any min/max points of the orbits for $F > \frac 32$ are impossible:
\be 
\label{DS3}
\tex{
F'=0 \LongA -(F'')^2=F^2(\frac 23 F-1)>0 \quad(\mbox{same for ``3-inflections", $F'''=0$}).
 }
 \ee
It follows that if $F(x_0)=\frac 12$ and
  $F(x)$ changes sign in any neighbourhood of $x=x_0$, then by analyticity 
$F'(x)>0$ for all small $x-x_0>0$. Hence,  $F'(x)>0$ for  $x>x_0$, i.e., $F(x)$ is strictly increasing, and
it is natural to expect that those local orbits for all such $B$'s then are attracted to a blow-up at some $x_1>x_0$, as explained earlier. Such orbits correspond to the monotone blow-up, while others (now from the manifold  \ef{DS2}) can be oscillatory according to the structure of the periodic oscillatory component and can partially appear in the admissible subset $D_{\rm hom}$. Actually those essentially sign changing orbits can represent a connection between any pair of
blow up points $(x_-,x_+)$ in such a way that $F(x) \to \iy$, as $x \to (x_\pm)^\mp$.

{\rm (ii)} 
If $F(x) \to 1$ as $x \to + \iy$ in $D_{\rm hom}$, then (\ref{DS1}) yields $0=- \frac 13$.

\end{proof}

The next step is to further reduce the order of \ef{DS1} to a non-autonomous 2nd-order ODE 
 \be
 \label{DS4} 
 \tex{
 F'=\psi(F), \,\,\,F''= \psi \psi'_F \LongA
  \psi''= -\frac {1}{2 \psi}(\psi')^2 + \frac 1{3 \psi^3} F^2(F-\frac 32).
   }
   \ee  
Let us describe some of its  properties translated from the homoclinic language used above.

\ssk

{\rm (i)} The asymptotic exponential tail decay \ef{DS2} yields the behaviour near the origin
 as $x \to - \iy$
 \be 
 \label{DS5}
  \tex{
F \to 0, \,\,\,\psi=F' \to 0, \,\,\, \mbox{and} \,\,
 \psi'_F= \frac {F''}{F'} \sim - \sqrt 2 \, \frac {\tan (\mu x)}{1+\tan(\mu x)}.
 }
 \ee 
For patterns, a similar behaviour is expected to exist, as $x \to -\iy$, so that
by \ef{DS5} we see a homoclinic of the $\psi'$-axis: 
 \be
 \label{DS6}
 H_{\rm hom}= \{F=\psi=0,\psi' \in \re \}.
  \ee 
On projection on the $\{F, \psi\}$-plane, it is viewed as a standard homoclinic of the origin.

\smallskip

{\rm (ii)} The key periodic orbits $\Gamma_{\rm max/min}$ become quite tricky and unbounded in the present variables. We have that
 \be 
 \label{DS7NN} 
 \tex{
\Gamma_{\rm max}=\{F= \hat F_0(x),\, \psi=\hat F_0'(x), \, \psi'= \frac {\hat F_0''(x)}{\hat F_0'(x)}, \,\, x \in 
[-\frac T2, \frac T 2]\} \,\,\,(T \,\,\mbox{is its period}),
 }
 \ee 
 so that $\psi'$ take infinite values when $F' \to 0^\pm$ and $\psi'' \not \to 0$.
 The pattern orbit can rotate near $\Gamma_{\rm max}$ as many times as possible, before 
 returning to the axis \ef{DS5} with an infinite number of rotations around.
 In general, any periodic orbit is situated on two ``bottles" which are infinite along the $\psi'$-axis.
 
 A similar description is true for $\Gamma_{\rm min}$  attracting the second family $\{F_{+2l}\}_{l \ge 2}$, with the same homoclinic properties. As we know, $\Gamma_{\rm min}$ corresponds to the periodic orbit $F_{2 \times \iy}(x)$, i.e., a  periodic solution in $\re$ (not a pattern) oscillating about $F=1$. The same holds for any of those periodics $\Gamma_\s$ composed by a periodic extension in $\re$ of any given pattern $\sim F_\s$ with spatial gaps organized according to the semi-period $\pi \sqrt 2$ of the exponential tail in \ef{DS2},  {\em etc}.

 \ssk

\subsection{A note on the cubic nonlinearity: $W^{5,\iy}$-attractor}

For the same-type equation with the cubic nonlinearity
 \be 
 \label{Cu10}
 F^{(4)}=-F+F^3,
 \ee 
the 1D exponential manifold \ef{DS2} remains identical, so that
 the admissible subset is a bounded one: if $F'=0$ (or $F'''=0$), then
  \be 
  \label{FF3}
  \tex{
  2F'F'''-(F'')^2 =
2\int_{-\iy}^x (-F+F^3)F' {\mathrm d}x
= \frac 12  F^2(F^2-2) \le 0 
\Rightarrow D_{\rm hom}= \{|F| \le \sqrt 2\},
 }
  \ee
  so that oscillations with local  min/max points are possible in $D_{\rm hom}$ only.
  We also immediately observe that for (\ref{Cu10}) by \ef{FF3}
   $$
   \tex{
   \not \exists \,\,\mbox{heteroclinics of equilibria $\{0\} \to \{\pm 1\}$}\quad  (0 \not =-\frac 14).
   }
   $$
   
  Orbits intersecting  the boundary  $\{|F|= \sqrt 2\}$ never come back and seem all blow-up at a finite $x=x_0^-(B)$. A further convergence to steady states $\pm 1$ is also impossible. 
Since the operator in \ef{Cu10} has odd nonlinearities and the functional is even (the L--S genus variational approach then applies), the overall geometric structure of the blow-up attractor is slightly more complicated
and consists of (see Fig. \ref{FigW2VV}):
\begin{enumerate}
\item[{\rm (i)}] Two $\pm \Gamma_{\rm max}$ and two $\pm \Gamma_{\rm min }$ corresponding to reflection $F \mapsto -F$.

\item[{\rm (ii)}] In addition, there exists a new main key periodic orbit denoted by $\Gamma_{\rm L-S}$ corresponding  
to the ``final"  L--S pattern  formally having an infinite L--S genus (category) and hence having an infinite length of dominated positive/negative humps. As usual,  any finite collection of  finite pieces of this periodic orbit can create a homoclinic, according to the same scenario as above for $\Gamma_{\rm min/max}$. 
Thus, the whole new L--S countable subset of patterns
 $\{F_k^{\rm L-S}\}_{k \ge 0}$ (note that $F_0^{\rm L-S}=F_0$), each one consisting of $k+1$ humps successively  changing signs
 (and perturbed ones by moving out any single pattern) is involved forming extra countable subsets of periodic orbits.   
On the $\{F,F'\}$-projection plane $\Gamma_{\rm L-S}$ looks more symmetric, circular,  and is centered at $(0,0)$.

\end{enumerate}

Overall, the infinite countable subset of invariant wings $W^{5,\iy}$ consists of  $2+2+1=5$ main key periodic orbits forming the same two ``double-layer" wings in both $\{F \ge, \le  0\}$ and a new  central L--S one; all are creating other periodics and  catching homoclinics in the same manner. 

\begin{figure}[htbp]
\begin{center}
\includegraphics[scale=0.32]{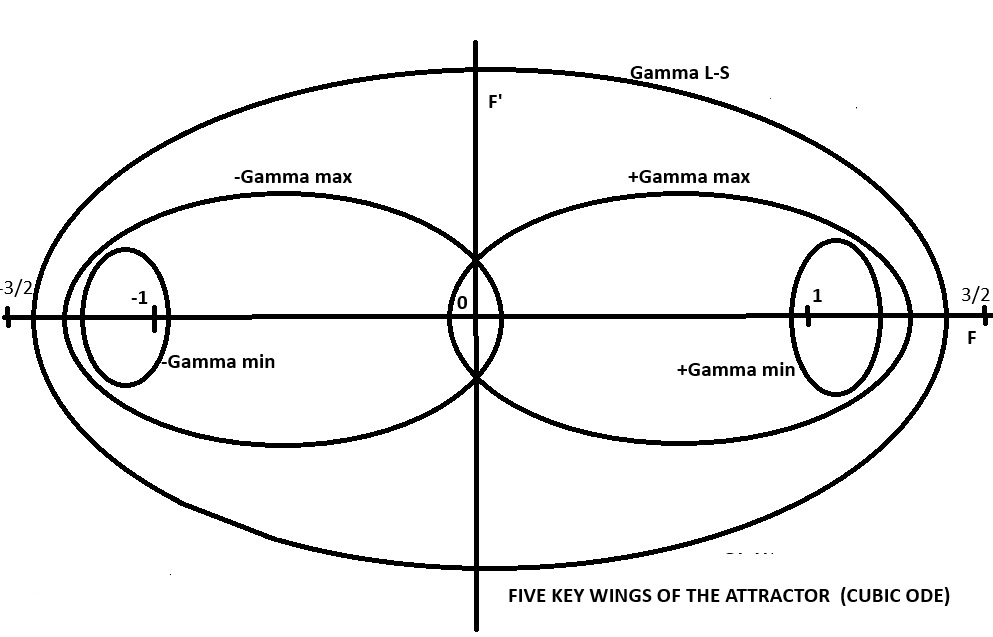}
\caption{Five key periodic  wings of the attractor $W^{5,\infty}$ for (\ref{Cu10}).}
\label{FigW2VV}
\end{center}
\end{figure}

\ssk

We do not attempt to mathematically justify all the above properties and to carefully study the given DS's.
Overall, as we mentioned, we are trying to stress the attention to the fact that similar nonlinear phenomena  
exist for a much wider and higher-order DS including
 $$
 (-1)^m D_x^{(2m)}F=-F + F^2 \quad (...+F^3, \,\,\,\mbox{\em etc.}) \,\,\mbox{for any} \,\, m \ge 3,
 $$
 where a standard DS-type detailed analysis of more complicated unstable blow-up $W^\iy$-attractors  does not look promising. Other semilinear and quasilinear non-variational DSs with similar properties can be introduced.

\ssk

Instead, in the second half of this paper \cite{AEGII}, we  present an algebraic description
of all the patterns (homoclinics of O) by using a {\em piece-wise linear} (P-L) approximation of the quadratic ODE:
 \be 
 \label{hhh1}
  \tex{
 F^{(4)}=-F+F^2 \,\,\Leftrightarrow \,\, F^{(4)}=|F- \frac 12|- \frac 12, \,\,\,F(\iy)=0 \,\,\,(\mbox{(PLP): a P-L Problem}).
   }
  \ee
 This PLP  admits a full patterns classification via fundamental systems of solutions of linear operators involved by matching/gluing  those $C^4$-solutions at the level $\{F= \frac 12\}$ leading to finite transcendent equations. The main difficulty is then to construct a proper continuous (a ``homotopic" in a natural sense if possible) $\e$-deformation for $\e \in [0,1]$ connecting ODEs in \ef{hhh1}, $\e=0 \leftrightarrow \e=0$, without losing Morse and/or rotations of vector fields indices (degrees of operators)  of all the suitable pairs of patterns available.

\section{On a formal expansion towards the first pattern $F_0(x)$: typical transcendent equations}
\label{SExpan}

 In a formal fashion, we briefly explain
in our basic quadratic model how  patterns can appear by an elementary expansion technique.
Similar but more difficult manipulations can be done using different even somehow implicitly prescribed structures of asymptotic tails as in \ef{AA1} and \ef{AA2}.  A final conclusion will remain the same:
a nontrivial ``mass-concentration" via changing the parameter of  1D asymptotic tails leads to a possible first pattern.
Of course  in the simplest case of \ef{N1.1} leading to manipulations with elementary functions  this is most convenient and easy.
Another  our goal here by presenting such trivial calculations with pure exponential and polynomial   is just to give some insight into 
$$
\mbox{typical transcendent manipulations occurring in P-L ODEs},
$$
a class of equations with piece-wise nonlinearities admitting a full algebraic pattern classification.
We have presented some already to be studied more in an attempt to classify the patterns for some non P--L models.

\subsection{A quadratic extension from $x=-\iy$}

{\sc (i)} Starting again from $x=-\iy$ according to \ef{DS2}, the first step is purely linear:
 \be
  \label{Z1}
  F^{(4)}=-F+... \LongA F(x) = F_B^{(0)}(x)= B \eee^{\mu x} \cos(\mu x) +... \, .
   \ee

{\sc (ii)} The second step involves the quadratic term $+F^2>0$ for $F \not = 9$ again in a linear fashion:
 \be 
 \label{Z2}
 \begin{matrix}
 F(x)=F_B^{(0)}(x)+ Y(x), \quad Y(x)=o(\eee^{\mu x}) \LongA Y^{(4)}=-Y + (F_B^{(0)}(x))^2+...=
 \\
 =-Y+ B^2 \cos^2(\mu x)+...= -Y + \frac 12 B^2(1+\cos(2 \mu x))+...
   \LongA
 \\
 Y(x) = F_B^{(1)}(x)= \frac 1{48} ^2 x^4 +  \frac 18 B^2 \cos (2 \mu x) - \frac 1{16}B^2 \cos(2\mu x)+... \quad (\mu = \frac 1{\sqrt 2}),
  \end{matrix}
  \ee
where for simplicity we present a part of this solution just to fix how the quadratic term can affect the linear expansion to add some clear positive influence to finally move the linear expansion up to create something looking like the first
pattern (but with a clear violation of its symmetry, cf. an improved  even expansion  below). In the last line of \ef{Z2}
we keep a single non-negative term $\sim x^4$ from the corresponding polynomial since it mostly affect the increased positivity of the improved linear expansion.
This is enough to visualise  a first $B \not =0$ for which a double zero appears (a tangent point at the $x$-axis) for this two terms approximation
 \be 
 \label{XX1}
F(x) \approx \tilde  F_B(x) \equiv  F_B^{(0)}(x) + F_B^{(2)}(x) \LongA \,\,(B_0,x_0): \,\,\,
 \left\{
 \begin{matrix}
 \tilde F_B(x)=0, 
 \\
  \tilde F_B'(x)=0.
  \end{matrix}
  \right.
 \ee  
Using \ef{Z1}, \ef{Z2} yields the algebraic system
  \be 
  \label{XX2}
  \left\{
  \begin{matrix}
  \cos(\mu x) + \frac 1{48} B x^4 + \frac 18 B \cos(2\mu x)=0, \quad 
  \\
  -\mu \sin(\mu x) + \frac 1{12} B x^3 - \frac{\mu}4 B \sin(2 \mu x)=0.
 \end{matrix}
 \right.
  \ee 
Substituting $B$ from the first equation
 \be 
 \label{XX3}
 \tex{
  B= - \frac{ \cos(\mu x)}{ \frac 1{48} x^4 + \frac 18 \cos(2 \mu x)}  
 }
  \ee
into the second one yields a single transcendent equation for the tangent point $x=x_0$:
 \be 
 \label{XX4}
 \tex{
x_0: \,\,\, \frac {\mu}4 \, \tan(\mu x)= -  \frac{x^3- \mu \sin(2\mu x)}{x^4+ 6 \cos(2 \mu x)}.
  }
  \ee 
We do not study this equation but note that the existence of a first minimal $x_0$ is seen by using an elementary balance  of the terms of different signs in \ef{XX1}.

A further enhancement  of such linearized expansions can be continued but computations then become much more cumbersome and do not clarify any principle properties. In particular, it is difficult then to see a birth of the second pattern $F_2 \sim F_0-F_0$, by gluing two $F_0's$, i.e., having  
 two positively dominant humps.


\subsection{An even extension centered at $x=0$} The linear approximation if $F(0) \not = 0$ is then even as well as  the second quadratic term (and other ones if necessary), and these are:
 \be
 \label{XX5}
  \begin{matrix}
  F_B^{(0)}(x)=B \cosh(\mu x) \cos (\mu x) \LongA F(x)=F_B^{(0)}(x) + Y, 
 \\
 F_B^{(1)}: \quad Y^{(4)}=-Y+ \frac 14 B^2(1+\cosh(2\mu x))(1+ \cos(2\mu x)) \LongA
 \\
 F_B^{(2)}(x) = \frac 1{96} B^2 x^4 + \frac 1{16} B^2 \cosh(2 \mu x) + \frac 1{16} B^2 \cos(2 \mu x)+... \, ,
 \end{matrix}
  \ee 
where we omit the last term containing $\sim \frac 1{16} B^2 \cosh(2\mu x) \cos(2 \mu x)$ which is of the same order and even smaller than the third term. Acting as around \ef{XX1}, for the first touching  of  
the axis $x$ at the point $(B_0,x_0)$ we obtain a similar algebraic system
 \be 
  \label{XX21}
  \left\{
  \begin{matrix}
 \cosh(\mu x) \cos(\mu x) + \frac 1{96} B x^4 + \frac 1{16} B \cosh(2\mu x)=0, \qquad \qquad \qquad \qquad 
  \\
  \mu [\sinh(\mu x)\cos(\mu x)-\cosh(\mu x) \sin(\mu x)] + \frac 1{24} B x^3  \frac{\mu}8 B \sinh(2 \mu x)=0.
 \end{matrix}
 \right.
  \ee 
  This yields a transcendent equation for $x_0$:
   \be 
   \label{XX22}
   \tex{
    B= - \frac{\cosh(\mu x) \cos(\mu x)}{ \frac{1}{96}x^4 - \frac 1{16} \cosh(2 \mu x)} \,\,\Longrightarrow \,\,
     \mu [\tanh (\mu x)- \tan(\mu x)]=4 
      \frac  { x^3 + 3\mu  \sinh(2 \mu x)}{x^4- 6 \cosh(2 \mu x)}.
       }
       \ee
 Again the structure of functions in \ef{XX5} shows that such a first tangency point $(B_0,x_0)$ exists.
  Moreover, the positive terms like $\frac 1{96}B^2 x^4$ and the next one in \ef{XX5}, make it possible 
  to lift up the second negative hump  in $F_B^{(0)}(x)$ to create a touching point. Then,  we can see that
  the length of the resulting positive hump will be close to the full  period of $\cos(\mu x)$ (i.e., the distance between two consecutive minima of its graph), i.e., to
   $
   T_1= 2 \mu \pi= 2 \sqrt 2 \pi = 8.8857...\, .
   $
  This is the typical length we have observed in all numerics associated with multi-hump patterns composed somehow from $\sim F_0(x)$'s.  Thus smaller oscillatory 
  terms having $\cos(2\mu x)$ create  oscillatory tails around $x_0$ with the half period
    $
    T_2= \frac 12 T_1= \sqrt 2 \pi = 4.4429...\,,
    $ 
    and the actual distance between neighbouring zeros is two time small and is 
     $
     \sim \frac \pi{\sqrt 2} \sim 2.2214...\, ,
     $
      which we also can see in the related figures. 
  Such  elementary but rather bulky manipulations given above partially remind us those rigorous ones 
  which are necessary and actually unavoidable for a complete classification of all the patterns of the PLP as in \ef{hhh1}. 
  In fact, those algebraic computations get more difficult if we really pretend to perform a kind of algebraic classification of all the patterns to appear.

\section{Pattern formation in several related models}
 \label{SMod}

 
 In this section we show several higher order ODE models for which our matching/gluing performance provides 
 some pattern formation.
    
\subsection{Not a $C^2$-nonlinearity: $F^{(4)}=-F -(|F|F-F)''$}
Here we briefly introduce  patterns for the ODE (\ref{Mod33}) with a $C^{1,0}$ nonlinearity.
The corresponding functional is calculated in the metric of $H^{-1}(\re)$: applying  $D_x^{-2}$
to \ef{Mod33}) yields
 $$
 \tex{
 F''=(-D_x^{2})^{-1}F - (|F|F-F) \LongA \Phi(F)= \frac 12 \int[(F')^2+F^2+(D_x^{-1} F)^2] - \frac 13 \int |F|^3,
 }
 $$
 in $H^1(\re)$.
The operator has odd nonlinearities, the functional is even and sufficiently regular, so the L--S sequence of critical points is guaranteed.
 Note that in this differential expressions we observe a  discontinuity:
 $$
 (|F|F)''= |F|F''+ {\rm sign}F \, (F')^2.
 $$
 For less regular operators allowing  similar patterns, there appear  measures: e.g.,
  $$
  F^2 \mapsto |F| \Longrightarrow (|F|)''= {\rm sign}F F'' + 2 \delta (F) (F')^2,
  \,\,\,\mbox{\em etc.}\,,
  $$
which makes no essential difference in a possible matching and other analysis.  
  
As usual, we also  show  non L--S patterns.
The shooting to the left point is again $x_0=-20$ with the same standard leading term linear expansion for $x \ll -1$:
$
F(x) \approx B {\rm e}^{\mu x} \cos(\mu x)$, $\mu = \frac 1{\sqrt 2}$.
See Fig. \ref{Mod1}--\ref{Mod6}.

\begin{figure}[htbp]
 \hfill   \hfill  \begin{minipage}[t]{0.45\textwidth} 
    \centering 
    \includegraphics[width=7cm,keepaspectratio]{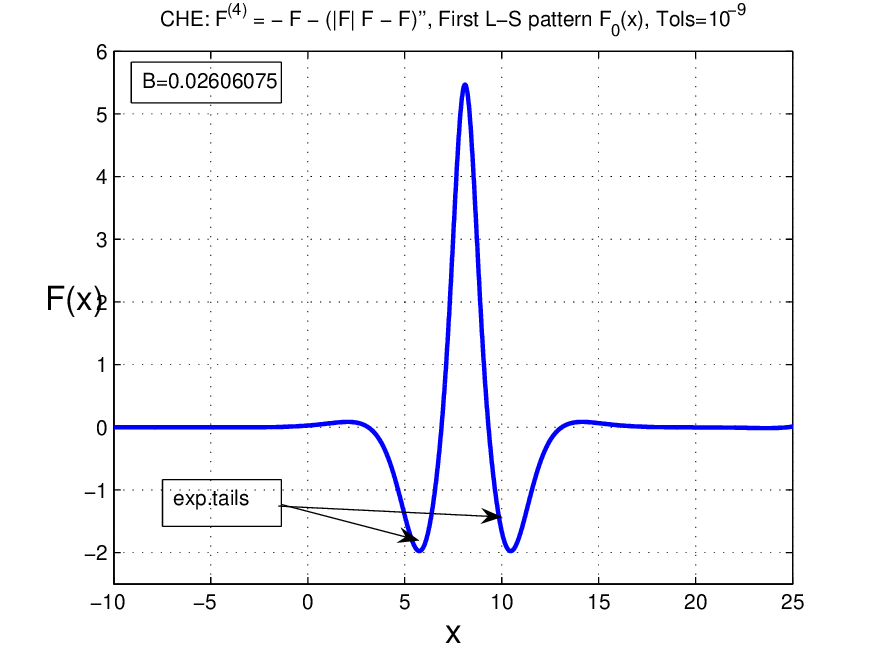}
   \caption{$F^{(4)}=-F -(|F|F-F)''$:
 the first basic pattern $F_0(x)$ (the absolute minimum point of the functional $\Phi(F)$), $B=0.02606075$.}
    \label{Mod1}
\end{minipage}
  \hfill   \hfill 
\begin{minipage}[t]{0.5\textwidth}
    \centering 
    \includegraphics[width=7cm,keepaspectratio]{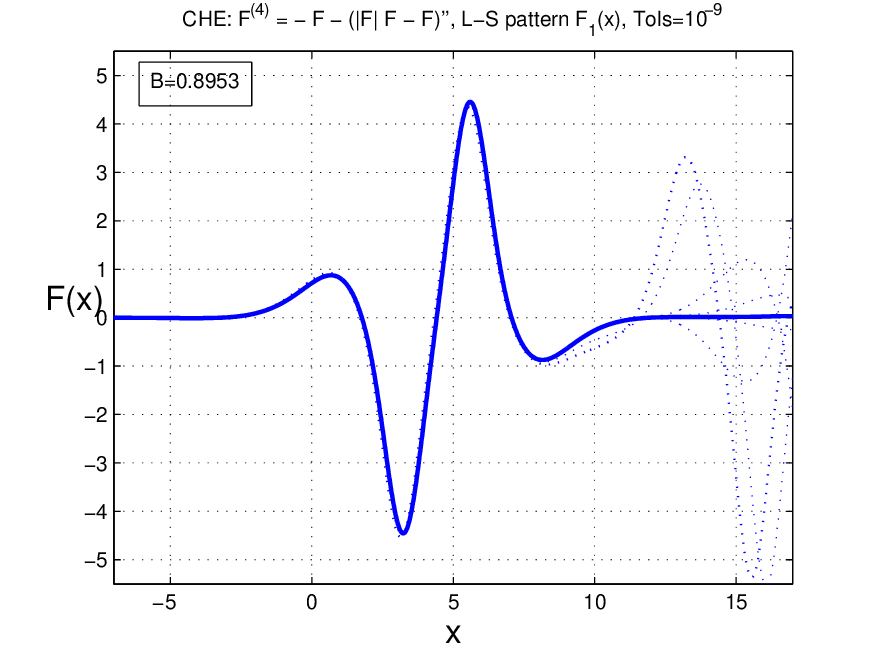}
    \caption{$F^{(4)}=-F -(|F|F-F)''$:
 the second basic L--S pattern $F_1(x)$, $B=0.8953$.}
    \label{Mod2}
\end{minipage}
\end{figure}

\begin{figure}[htbp]
 \hfill   \hfill  \begin{minipage}[t]{0.45\textwidth} 
    \centering 
    \includegraphics[width=7cm,keepaspectratio]{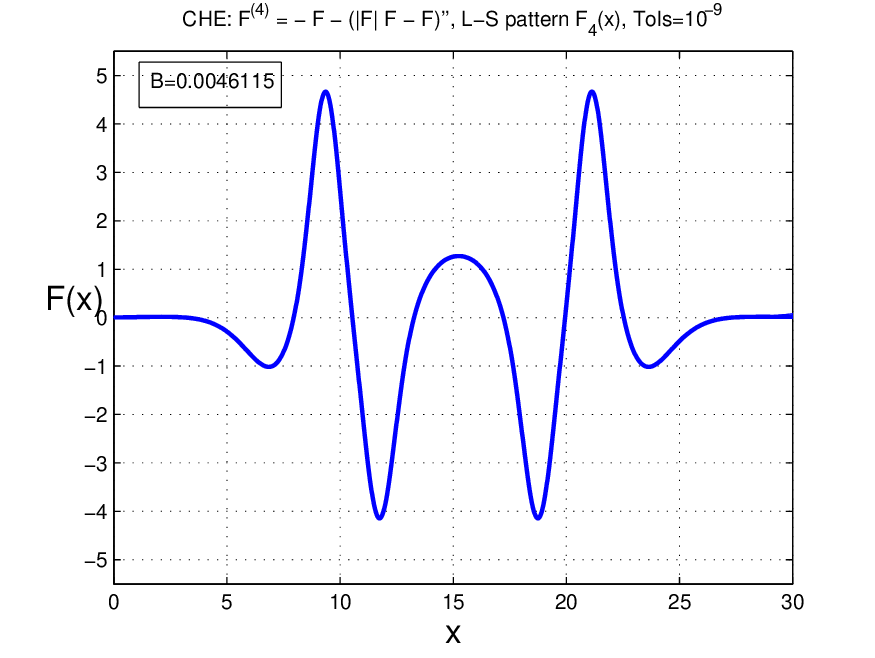}
   \caption{$F^{(4)}=-F -(|F|F-F)''$:
 the third basic L--S pattern $F_2(x)$, $B=0.0046115$.}
    \label{Mod3}
\end{minipage}
  \hfill   \hfill 
\begin{minipage}[t]{0.5\textwidth}
    \centering 
    \includegraphics[width=7cm,keepaspectratio]{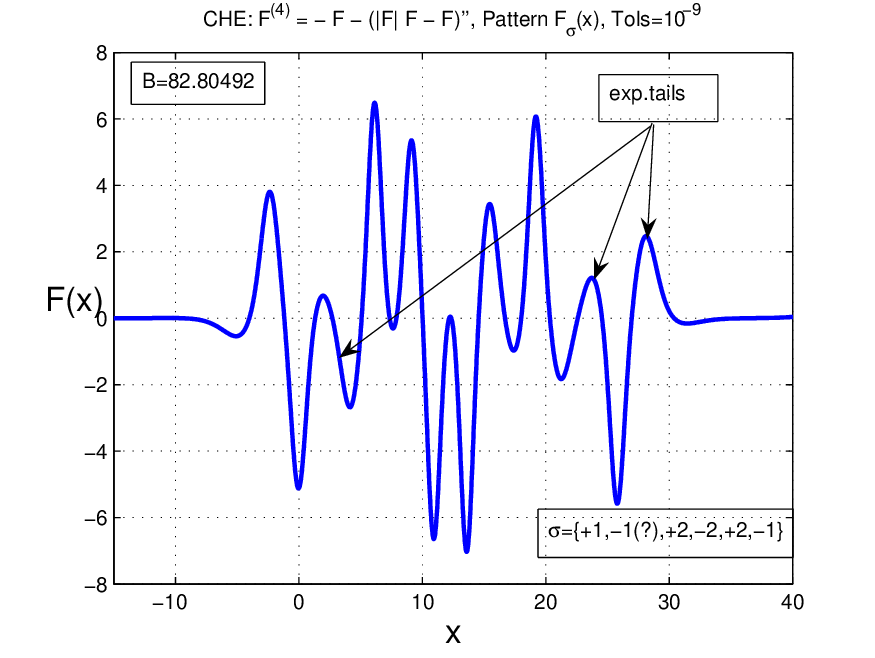}
    \caption{$F^{(4)}=-F -(|F|F-F)''$:
 a pattern $F_\sigma(x)$,
 $\sigma \sim \{+1,-1, ({\rm tail}),+2,-2,+2,-1\}$,
  $B=82.80492$.}
    \label{Mod4}
\end{minipage}
\end{figure}

\begin{figure}[htbp]
 \hfill   \hfill  \begin{minipage}[t]{0.45\textwidth} 
    \centering 
    \includegraphics[width=7cm,keepaspectratio]{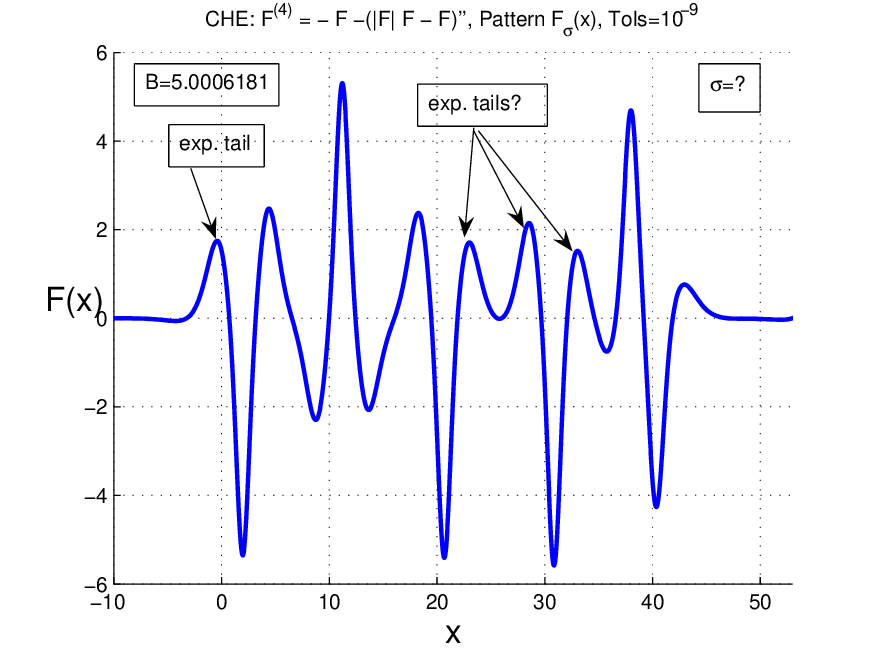}
   \caption{$F^{(4)}=-F -(|F|F-F)''$:
 a complicated pattern $F_{\sigma_1}(x)$, $B=5.00081781$.}
    \label{Mod5}
\end{minipage}
  \hfill   \hfill 
\begin{minipage}[t]{0.5\textwidth}
    \centering 
    \includegraphics[width=7cm,keepaspectratio]{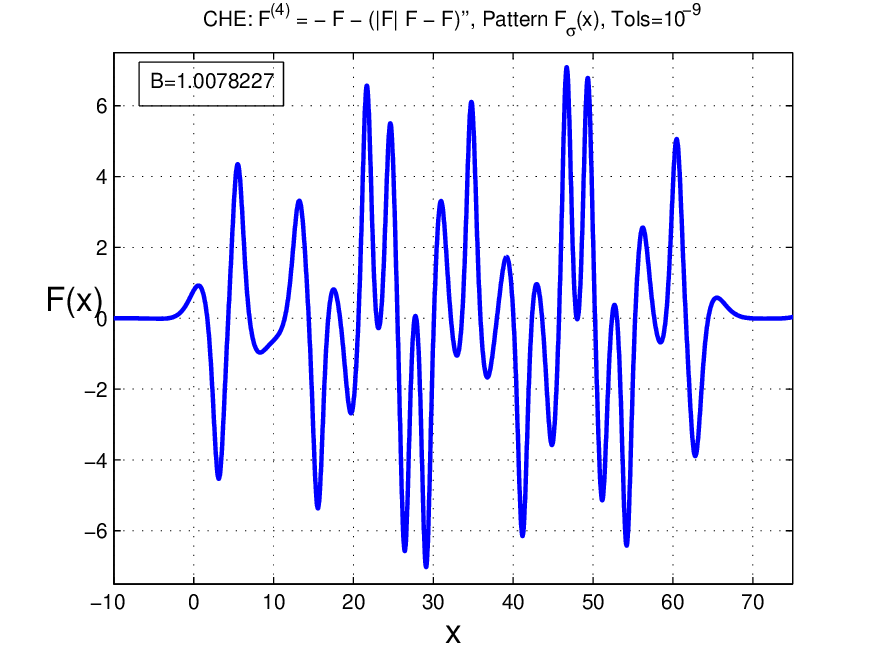}
    \caption{$F^{(4)}=-F -(|F|F-F)''$:
 another complicated pattern $F_{\sigma_2}(x)$, $B=1.0078227$.}
    \label{Mod6}
\end{minipage}
\end{figure}

\subsection{Patterns for a 
 H\"older continuous non-variational ODE}
  \label{SHold}

Finally, let us briefly discuss the exotic  ODE (\ref{Hold1}).
Patterns are not exponentially decaying as
$x \to \iy$ and are compactly supported with the following
behaviour at the support end points:
 \be
 \label{Hol2}
 {\rm supp} \,F(x)=(x_0,x_1), \,\,F(x)= (x-x_0)^8
 \varphi_*(\ln(x-x_0)+s_0)+..., \,\, x \to x_0^+, 
   \ee
  where $s_0 \in \re$ is a translational constant for solutions of a nonlinear autonomous ODE for $\varphi(s)$.  
 A similar behaviour occurs at the second end-point of the support, as $x \to x_1^-$.
In \ef{Hol2}, the {\em oscillatory component} $\varphi_*(s)$ is a periodic
function, so that \ef{Hol2} represents a 2D stable manifolds with
two parameters $x_0,s_0 \in \re$. This kind of oscillatory
behaviour at  finite interfaces replacing the exponential decay
for smooth $C^1$-nonlinearities were studied in \cite{EGK1,EGK2},
see also  \cite[Ch.~1]{GMPBook}, so 
we present a few figures.

 The oscillatory sign-changing  kind of behaviour allows to apply the same as above 
 procedure to a ``straightening" to an inflection point of asymptotic positive/negative humps 
 of the  expansion. Since the expansion (\ref{Hol2}) is nonlinear and is not known rigorously,
 a proof is difficult to justify. However,
basic patterns, more or less, are shown to remain of the
same geometric structure, see Figs \ref{FigHol1}--\ref{FigHol3} below. 
For any pair of patterns with disjoint supports one can see an uncountable subsets of patterns by  moving them
independently  without any supports overlapping.   

In numerics, we do not mimic a complicated manifold 
\ef{Hol2} and  at $x=x_0=-20$ we put
 $$
 F(x_0)=F'(x_0)=F''(x_0)=0, \andA F'''(x_0)=B,
 $$
 where $|B| \ll 1$ is an analogy of the constant $B$ in the
 exponential tail \ef{OneP1} for the analytic decay.
 A better use of the envelope $\sim (x-x_0)^8$
 for these boundary conditions (say at some $x=x_0+10^{-2}$)
 is not necessary and does not give any improvements.

 \begin{figure}[htbp]
 \hfill   \hfill  \begin{minipage}[t]{0.45\textwidth} 
    \centering 
    \includegraphics[width=7cm,keepaspectratio]{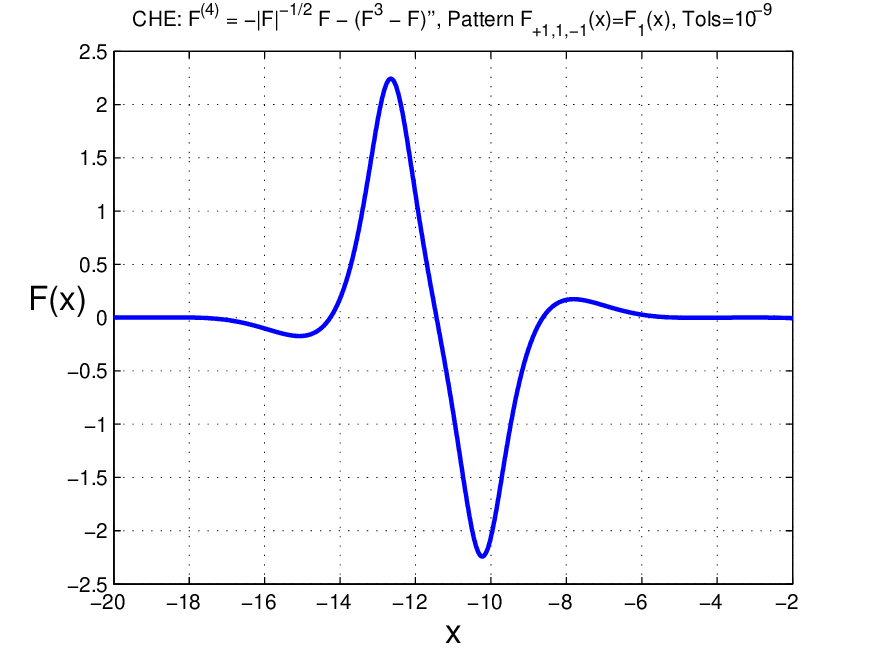}
   \caption{Equation (\ref{Hold1}):
  pattern
$F_1(x)= F_{+1,1,-1}(x)$, $B=0.003977866$.}
    \label{FigHol1}
\end{minipage}
  \hfill   \hfill 
\begin{minipage}[t]{0.5\textwidth}
    \centering 
    \includegraphics[width=7cm,keepaspectratio]{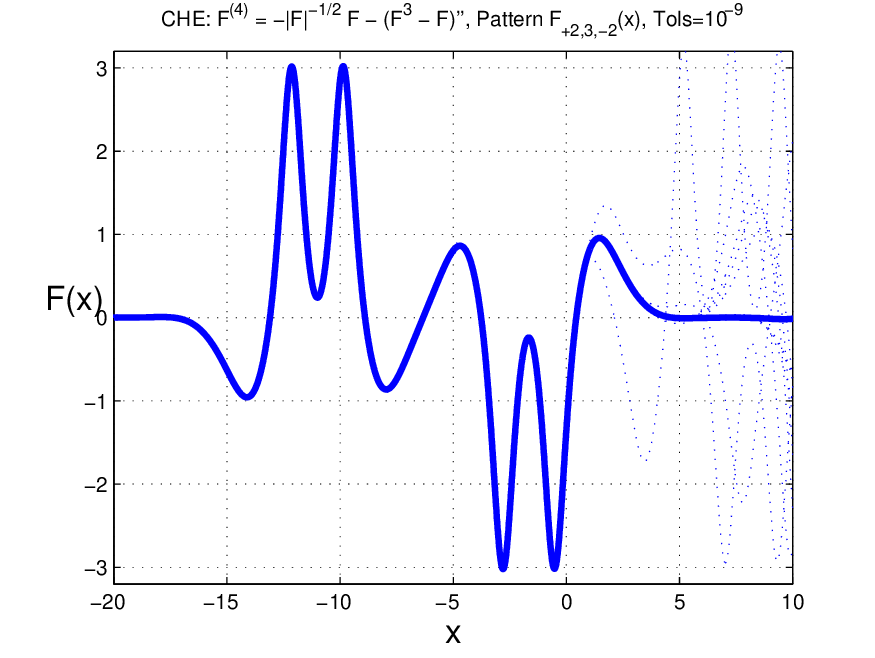}
    \caption{Equation (\ref{Hold1}):
 pattern
$F_{+2,3,-2}(x)$, $B=0.0100004165$.}
    \label{FigHol2}
\end{minipage}
\end{figure}

\begin{figure}[htbp]
\begin{center}
\includegraphics[scale=0.52]{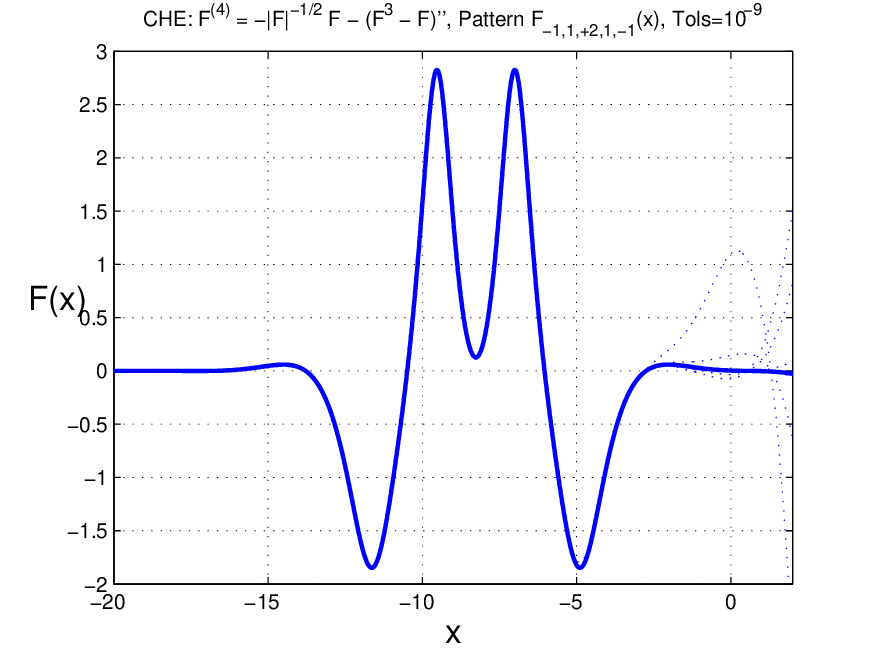}
\caption{Equation (\ref{Hold1}):
pattern
$F_1(x)= F_{-1,1,+2,1,-1}(x)$, $B=0.01584537$.}
  \label{FigHol3}
\end{center}
\end{figure}

{\bf Remark: small deformations of $-F$ keep the exponential tail.} Such a curious example of a H\"older continuous nonlinearity
in \ef{Hold1} instead of the standard $-F$ in  the ODEs  which changes  exponential tails into  the oscillatory \ef{Hol2} at finite interface points inspires  another (positive) example.
For instance, consider the ODE with another  perturbed odd term $-F$:
 \be 
 \label{CH123}
 F^{(4)}= -\kappa(F) +F^2, \quad \mbox{where} \quad \kappa(F)=F|\ln|F||\,\,\mbox{for small} \,\,|F| \ge 0,
 \ee
 where $\kappa(F)$ is locally H\"older continuous, for any exponent $\a \in (0,1)$, and $\kappa'(0)= \iy$.
   However  transformed oscillatory  exponential tails exist and are given by
  $$
  \tex{
  F(x) \sim {\rm e}^{-\nu |x|^{4/3}} \cos(\nu |x|^{4/3}) \asA x \to \iy, \quad \nu= \frac 1{\sqrt 2}(\frac 34)^{4/3},
  }
  $$
 so that matching of various patterns is allowed almost in the same manner as for the standard quadratic ODE.
 It is convenient  that the leading term of such tail expansions  is given explicitly.

\section{On extensions to other indefinite operators}
 \label{SectExp4}

\subsection{Fourth-order ODE with an indefinite exponential
nonlinearity}

 We continue to present similar gluing/matching patterns for other ODEs with indefinite
 operators beginning with the following exponential one:
 \be
 \label{exp41}
 F^{(4)}=-F+F^2 {\mathrm e}^{F-1} \quad (\exists \,\mbox{equilibria} \,\, F_*=0\,\, \mbox{and} \,\,1).
  \ee
Fig. \ref{FigF1} shows standard $F_0$, $F_2$, and $F_{+4}$
patterns for \ef{exp41}.

\begin{figure}[htbp]
\begin{center}
\includegraphics[scale=0.52]{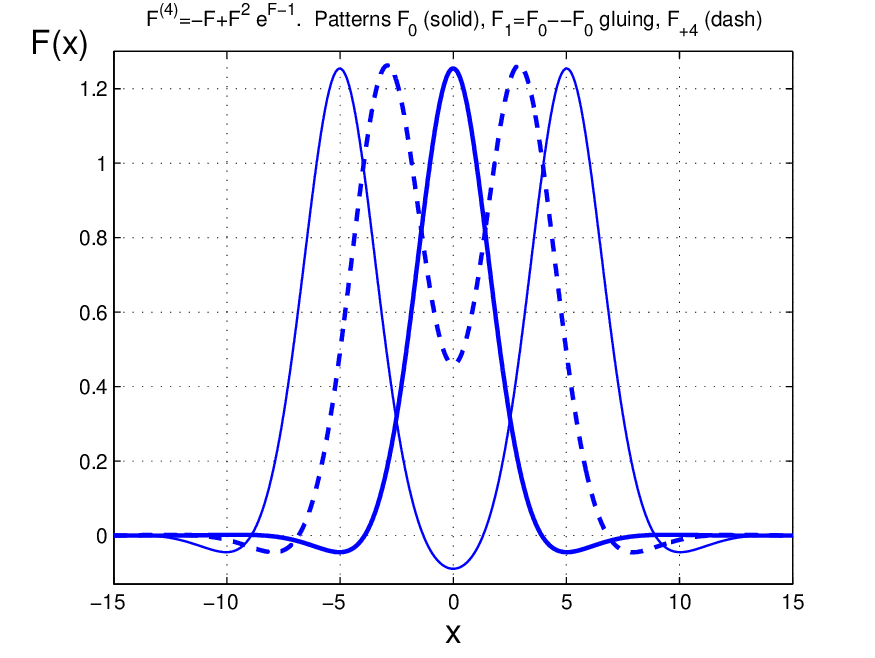}
\caption{Three typical solutions of \ef{exp41}.}
\label{FigF1}
\end{center}
\end{figure}

\subsection{On a ``polynomial" equation $F^{(4)}=\pm F^2+F^3$ and similar}

As we have seen from the ODE \ef{Hold1} with a H\"older continuous term,  
existence of various patterns by matching requires oscillatory tails of possible solutions at the end points
or at least a 2D asymptotic manifold.
Here we consider two cases of simple polynomial-like nonlinearities without a linear term $-F$ in the ODEs.

\smallskip

{\sc  (i) Patterns for $...=-F^2$.}  
Obviously,   a local nonexistence  asymptotic result happens if
 \be 
 \label{non78}
 \mbox{a 2D manifold of solutions $F(x) \to 0$ as $x \to \iy$ is unavailable.}
  \ee
For instance, in the case of a quadratic term $=-F^2+...$ instead of the standard $=-F+...$:
 \be
 \label{QQ1}
 \tex{
 F^{(4)}= -F^2+F^3 \quad(\Longrightarrow \int_\re F^3=\int_\re F^2>0),
 }
 \ee
 most initial data lead to  BVP solutions.
In Figures \ref{f7-24} and \ref{plusF2} we present
typical numerical samples for \ef{QQ1}. Note that \ef{QQ1} admits an algebraic decay via
$$
F(x)=x^{-4}\varphi(s)+...\,, \,\,\, s=\ln x, \,\,\,x \to +\iy,
$$
where $\varphi(s)$ is a periodic or any suitable global solutions in $\re$ of an autonomous ODE, see below.

{\sc  (i) Patterns for $...=F^2$.}   On the other hand, Figure \ref{plusF2} also shows  a pattern for the opposite sign in the term, $+F^2$:
\be
\label{plus11}
\tex{
F^{(4)}=F^2+F^3\quad(\Longrightarrow \int_\re F^3=-\int_\re F^2 < 0),
}
\ee
where a suitable asymptotics is available and obviously
\be  
\label{plus12}
F(x) \sim 840 \, x^{-4}+... \asA x \to +\iy.
\ee
This manifold is 2D (recall the translation $x \mapsto x+x_0$) and hence can be used to match/glue other patterns into more complicated ones in $\re$. Note that the expansion $-F(x)$ in \ef{plus12} works for \ef{QQ1}.

In Figure \ref{plus22} we show the first pattern $F_0(x)$ in $\re$ for \ef{plus11} so that $-F_0(x)$ satisfies \ef{QQ1}.

\begin{figure}[htbp]
\begin{center}
\includegraphics[scale=0.42]{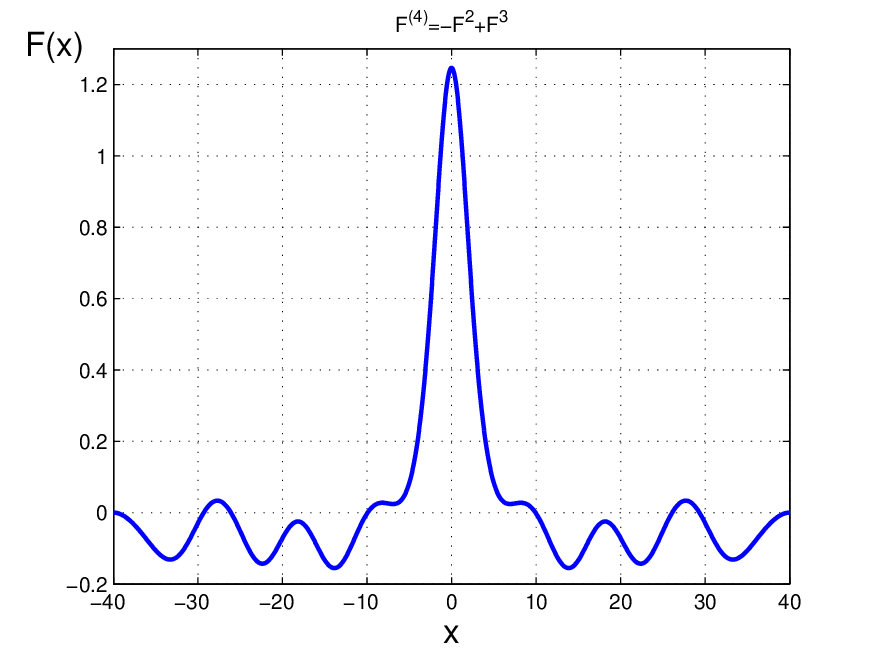}
\caption{A typical non-decaying solution of
\ef{QQ1}.} \label{f7-24}
\end{center}
\end{figure}

\begin{figure}[htbp]
\begin{center}
\includegraphics[scale=0.32]{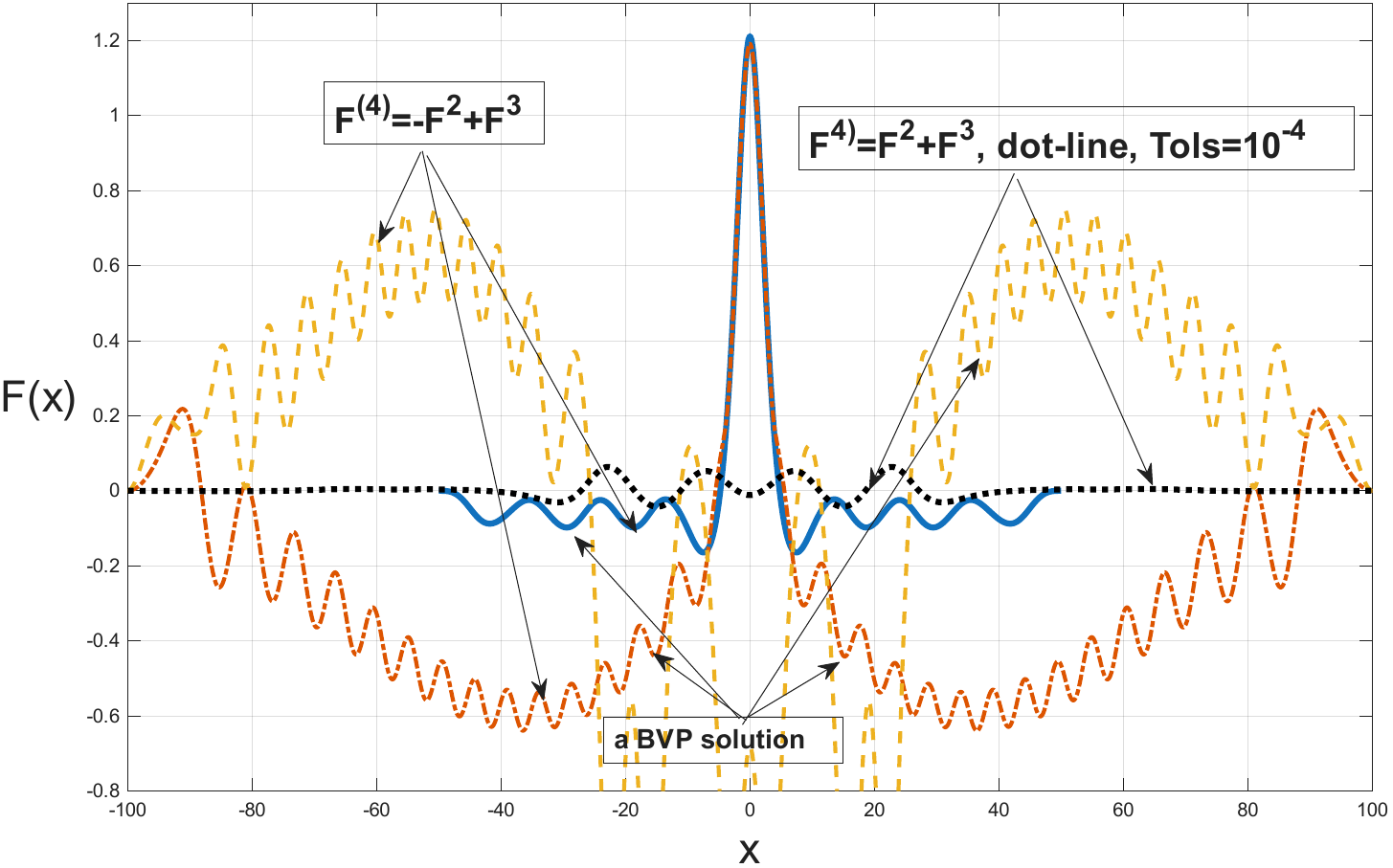}
\caption{Typical solutions of $F^{(4)}=\pm F^2+F^3$.}
\label{plusF2}
\end{center}
\end{figure}

\begin{figure}[htbp]
\begin{center}
\includegraphics[scale=0.32]{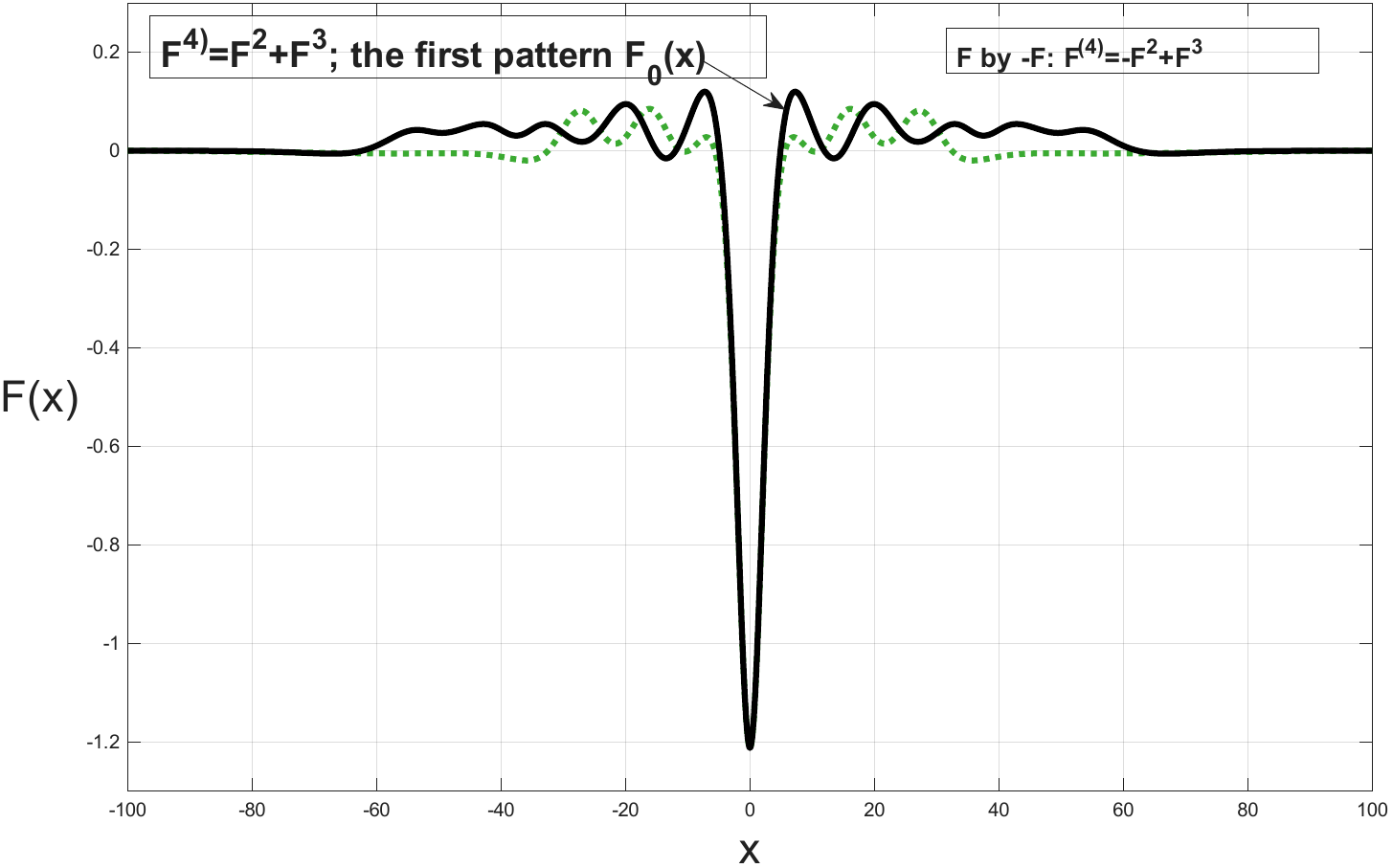}
\caption{The first pattern $F_0(x)$ for  $F^{(4)}=F^2+F^3$.}
\label{plus22}
\end{center}
\end{figure}

A similar ``asymptotic nonexistence" (for some initial data) is presented for a 
fifth degree polynomials:
\be
 \label{QQ2}
 F^{(4)}= -F^2+F^5 \quad \mbox{and} \quad  F^{(4)}=-F^4+F^5;
 \ee
see Figures \ref{f7-25} and \ref{f7-26} where clearly BVP patterns appear.

\begin{figure}[htbp]
 \hfill   \hfill  \begin{minipage}[t]{0.40\textwidth} 
    \centering 
    \includegraphics[width=7cm,keepaspectratio]{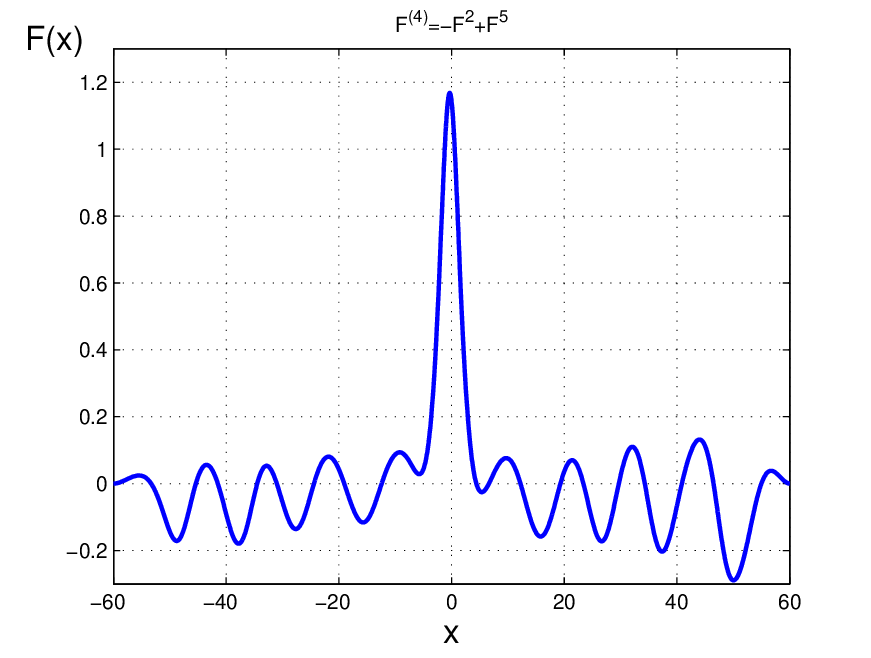}
   \caption{A typical non-decaying (BVP) solution of the first ODE in
\ef{QQ2}.}\label{f7-25}
\end{minipage}
  \hfill   \hfill 
\begin{minipage}[t]{0.5\textwidth}
    \centering 
    \includegraphics[width=7cm,keepaspectratio]{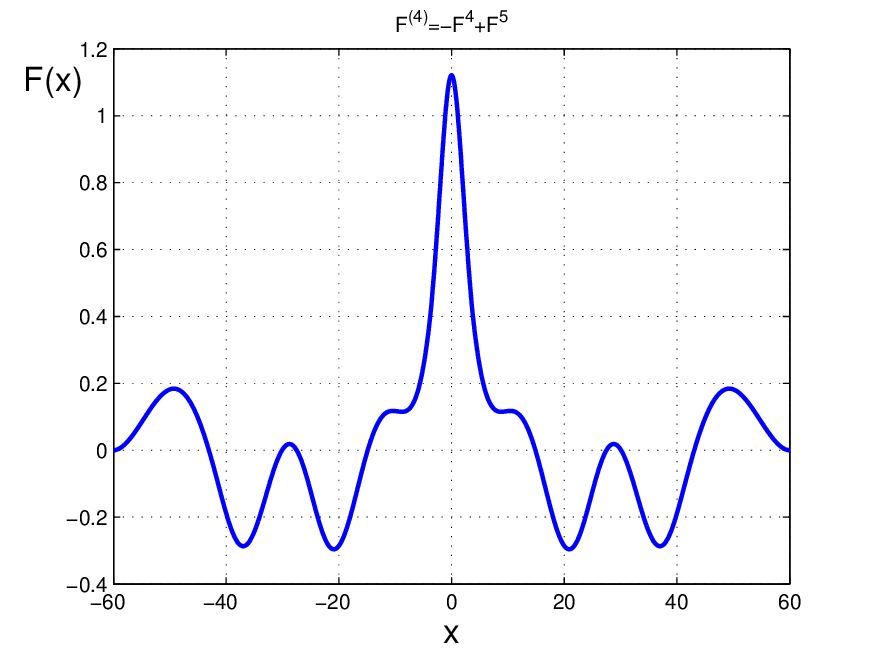}
   \caption{A typical non-decaying (BVP) solution of the second ODE in
\ef{QQ2}.}\label{f7-26}
\end{minipage}
\end{figure}

\smallskip

Observe that  most of the local orbits of \ef{QQ1} and \ef{QQ2} blow-up at some finite $x \to x_0^-$ and $F(x)$ becomes unbounded nearby:
$$
\begin{matrix}
F^{(4)}=F^3+... \LongA F(x) =\pm (x-x_0)^{-2} \varphi_*(\ln(x-x_0)+s_0),
\\
F^{(4)}=F^5+... \LongA F(x)= \pm  (x-x_0)^{-1} \varphi_*(\ln(x-x_0)+s_0), \quad x_0, \, s_0 \in \re,
\end{matrix}
 $$
 where $\varphi_*(s)$ is a periodic sign-changing solution of the corresponding ODEs of the type \ef{BB6} (or  \ef{Mod68} below) is derived similarly.

\subsection{Existence:  equations with nonlinear algebraically decaying tails}

We show that proper patterns exist in the case of polynomial nonlinearities under the condition that
 \be 
 \label{Lead33}
 ...=-F+... \,\,\, \mbox{is replaced by a {\em monotone} function as $F \to 0$}.
 \ee 
  
  \begin{enumerate}
\item[(I)] {\sc  Existence for $-|F|F$.}
Replacing $-F^2$ by a monotone quadratic $-|F|F$
yields 
 \be 
 \label{Mod66}
F^{(4)}=-|F|F+ F^4 \quad  \mbox{in} \quad \re, \quad F(\iy)=0 \quad (\mbox{a ``bi-quadratic" ODE}).
 \ee
 The leading term, as $F \to 0$, creates the following algebraic tails of  solutions, as $x \to +\iy$:
  \be 
  \label{Mod67}
 F^{(4)}=-|F|F \LongA F(x) = x^{-4} \varphi(s+s_0) \to 0, \quad s= \ln x \to +\iy, \,\,\, s_0 \in \re,
  \ee
  where the {\em oscillatory component} $\varphi(s)$ solves
    \be 
    \label{Mod68}
   \varphi^{(4)} -22 \varphi''' + 179 \varphi'' - 638 \varphi' +
  840 \varphi = - |\varphi|\varphi.
   \ee
  Compare it with \ef{BB6} for blow-up phenomena containing  similar operators but having different existence conclusions.   
  As we have mentioned,  ODEs like \ef{Mod68} are known to admit a periodic sign-changing solution $\varphi_*(s)$ \cite{EGK1,EGK2}, unlike \ef{BB6} in our blow-up analysis
   in Section \ref{S2N}.
  Therefore the oscillatory character of asymptotics (\ref{Mod67}), though not given explicitly, technically allows us 
 to apply matching/gluing approaches to create similar countable subsets of various patterns.
  
  The operator in \ef{Mod66} is indefinite and  the L--S family of critical points is nonexistent. 

\item[(II)] {\sc Algebraic tails for $-F^3$.} A similar tail occurs for the cubic nonlinearity
\be 
\label{mCU1}
F^{(4)}=-F^3 + F^4 \LongA F(x) \sim x^{-2} \varphi(s+s_0) \to 0, \quad s= \ln x \to +\iy, \,\,\, s_0 \in \re,
 \ee
where a periodic  $\varphi(s)$ solves a similar to \ef{Mod68} ODE with $-\varphi^3$ on the right-hand side. 
 
 Thus, both ODEs \ef{Mod66} and \ef{mCU1} and many others with such  ``fully nonlinear" operators 
satisfying \ef{Lead33} admit a similar variety of homoclinic orbits of O creating a similar attractor $W^{2,\iy}$.
The only difference is that the deficiency $(2,2)$ (a ``saddle-node")  linearized behaviour close to the origin O 
is replaced by the nonlinear one, as in \ef{Mod67}, and \ef{mCU1}. A full extension of pattern formation requires a more careful analysis of the existence of a nontrivial oscillatory  sign changing periodic solutions $\varphi_*(s)$ of related ODEs like \ef{Mod68}, and then we arrive at a two-parametric family of algebraically decaying tails
$\{F(x+x_0), x_0 \in \re\}$ containing also the second  parameter $s_0 \in \re$ of the $\ln x$-translation. In other words, we keep the values $(2,2)$ of the ``nonlinear" deficiency indices, so that these 2D asymptotic oscillatory  tails allow  
us to perform a similar classification of countable pattern subsets.  
 \end{enumerate}

\subsection{Algebraic oscillatory interface tails for a quasilinear degenerate ODE}

As a typical but a difficult example, we take the first ODE in \ef{TFE2} in the form (it is $L^2$-variational)
 \be 
 \label{Nn1}
 (|F''|^n F'')''=-F + F^2, \quad \mbox{where $n >0$ is a fixed parameter.}
\ee 
Since the differential operator is degenerate at $\{F''=0\}$, \ef{Nn1} is understood in a  weak sense and solutions are linear functionals in $C_0^\iy(\re)$, $|F''|^n F'' \in L^1_{\rm loc}(\re)$ (an estimate to be derived separately), {\em etc.} We first  need to reveal the oscillatory properties and simultaneously their actual regularity at the end points of solutions finite support. Let $F(x)$ be compactly supported on an interval $(x_0,x_1)$. Then checking its behaviour as $x \to 0^+$ we perform the same blow-up scaling for small $|F| \ge 0$, i.e., as $x \to x_0^+$: 
\be 
\label{Nn2}
\tex{
(|F''|^n F'')''=-F \Longrightarrow F(x)= (x-x_0)^a \varphi(s), \,\, s = \ln (x-x_0) \to - \iy, \,\, a= \frac {2(n+2)}n,
  }
 \ee
 where the oscillatory component $\varphi(s)$ satisfies a harder than usual  $s$-autonomous ODE
  $$
  {\rm e}^{-s}\{ {\rm e}^{-s}[{\rm e}^{(a-2)(n+1)s} | \varphi''+(2a-1)\varphi'+a(a-1)\varphi|^n 
   (\varphi''+(2a-1)\varphi'+a(a-1)\varphi)]'\}'=-{\rm e}^{a s},
 $$
 since the exponential multipliers  $\sim {\rm e}^{s}$ cancel each other: $(-1)+(-1)+(a-2)(n+1)=a$. Such ODEs for $\varphi$ are known to admit an oscillatory sign-changing periodic solution $\varphi_*(s)$; see similar thin film ODEs in \cite{EGK1,EGK2}. Moreover, in these papers, it is shown that the  necessary uniqueness of $\varphi_*$ can be traces out by passing to the limit $n \to 0^+$. Namely, we observe an actual convergence to the linear exponential tails for $n=0$: using a singular boundary layer approach, in a natural rescaled sense, as $n \to 0^+$,
  $$ 
   F(x)=(x-x_0)^a \varphi_*(\ln (x-x_0)+s_0),\,\, x \approx 0^+
 \quad \mbox{``approaches"} \quad  C_0 {\rm e}^{\mu x} \cos(\mu x + C_1), \,\,  x \ll -1,  
   $$ 
 where as always $\mu = \frac 1{\sqrt 2}$.
   Both manifolds  are 2D: two arbitrary constants $(x_0,s_0)$ in the former and two $(C_0,C_1)$ in the latter
 (by $x$-translation these are 1D). Since for $n=0$ the uniqueness of the periodic orbit is given and obvious, we expect that $\varphi_*(s)$ is unique for $n>0$ by continuity. For small $n>0$ this can be proved by a perturbation argument and a further extension to $n=1$ is expected but is not established rigorously. Thus, as above, we observe a 2D oscillatory manifolds as $n \to 0$ (a ``nonlinear deficiency indices" $(2,2)$). This allows us to apply matching/gluing methods to reveal pattern subsets and eventually the corresponding  chaotic  attractor $W^{2,\iy}$ of homoclinics of \ef{Nn2}.
  
  Curiously, it follows from the generic expansion \ef{Nn2} near finite interfaces, that those weak compactly supported weak solutions are actually the  classic ones therein if
   $$
   \tex{
 F \in C^4: \quad   a= \frac{2(n+1)}n > 4, \quad \mbox{i.e., for any} \quad n \in (0,2).
   }
   $$
At other degenerate points where $F''=0$ but $F \not =0$ the regularity is worse: 
$$
\tex{
F \in C^{3,\d}\quad \hbox{with}\quad  
\d=\frac {1-n}{1+n} \in (0,1)\quad \hbox{for}\quad n \in (0,1).
}
$$

\section{Semilinear six-order ODE operators}
\label{SSix}

\subsection{Sixth-order ODE with an indefinite exponential
nonlineairty}

The corresponding sixth-order ODE with an exponential non-odd
nonlinearity is
 \be
 \label{exp42}
 F^{(6)}=F-F^2 {\mathrm e}^{F-1} \quad (\exists \, F_*=0, \, 1).
  \ee
The corresponding complicated pattern glued from five  variously
$x$-distributed  $F_0$'s and a single $F_{+2}$ is shown in Fig.
\ref{FigF2}. 
 Fig. \ref{FigF3} shows a typical basic $F_{+6}$ pattern, while Fig. 
 \ref{FigF4} reminds a possibility to glue more extended
 patterns, whose zero sets are shown in Fig. \ref{FigF5},

\begin{figure}[htbp]
 \hfill   \hfill  \begin{minipage}[t]{0.40\textwidth} 
    \centering 
    \includegraphics[width=7cm,keepaspectratio]{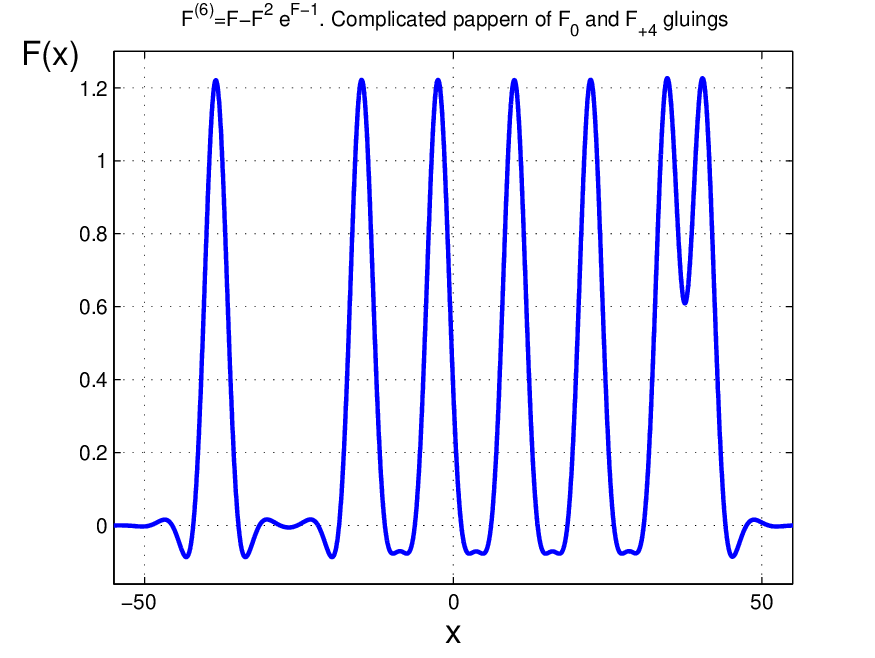}
   \caption{A complicated  solution of \ef{exp42}.}
\label{FigF2}
\end{minipage}
  \hfill   \hfill 
\begin{minipage}[t]{0.5\textwidth}
    \centering 
    \includegraphics[width=7cm,keepaspectratio]{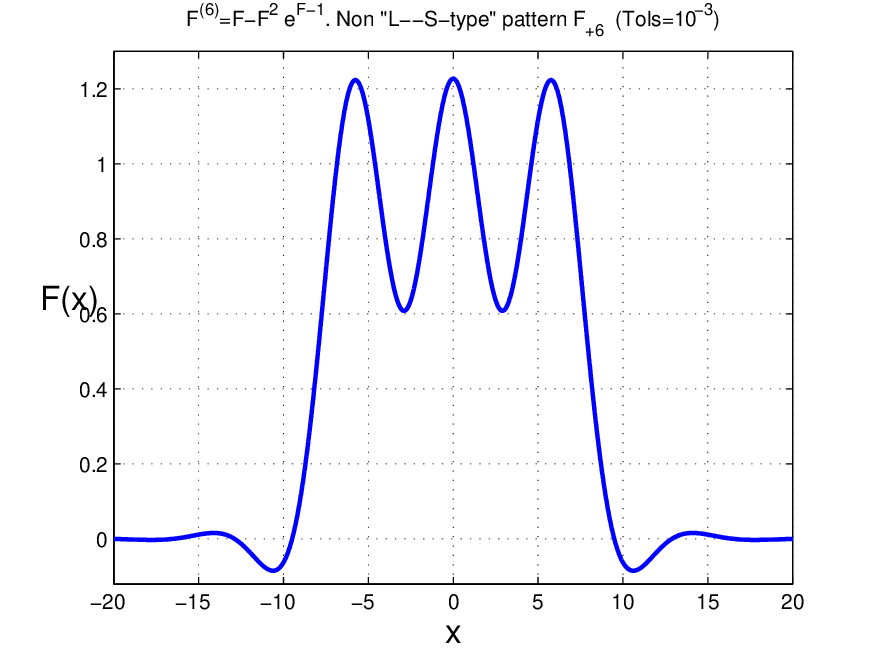}
  \caption{$F_{+6}$  solution of \ef{exp42}.}
\label{FigF3}
\end{minipage}
\end{figure}


\begin{figure}[htbp]
 \hfill   \hfill  \begin{minipage}[t]{0.40\textwidth} 
    \centering 
    \includegraphics[width=7cm,keepaspectratio]{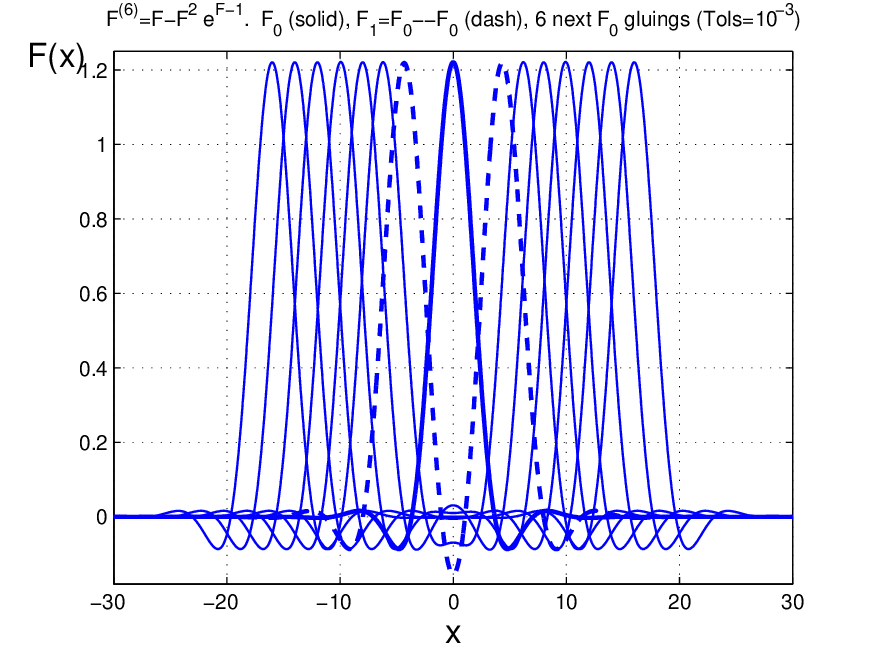}
    \caption{Three different  solutions of \ef{exp42}.}
\label{FigF4}
\end{minipage}
  \hfill   \hfill 
\begin{minipage}[t]{0.5\textwidth}
    \centering 
    \includegraphics[width=7cm,keepaspectratio]{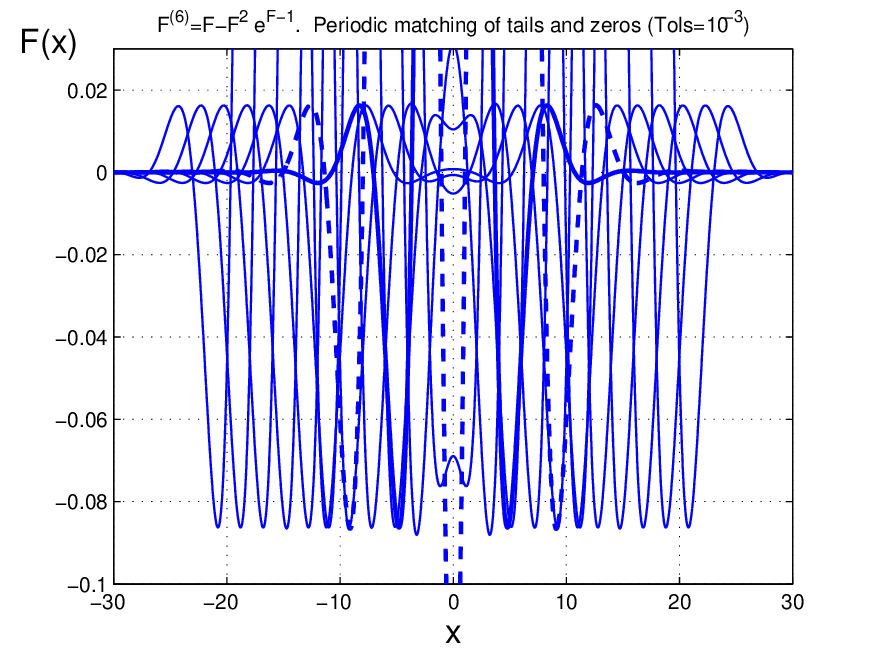}
 \caption{Enlarged zero sets of  these three patterns of \ef{exp42}.}
\label{FigF5}
\end{minipage}
\end{figure}




\subsection{A few comments on a sixth-order ODE with a nonlinearity
of a ``Black Jack" (21st) degree}

Finally, as the last somehow extended example, we consider an
exotic sample with an operator with odd nonlinearity having the 21st degree
 \be
 \label{BJ1}
 \tex{
 F^{(6)}=F -F^{21} \LongA \Phi(F)= \int [(F''')^2 + F^2]- \frac 1{22} \, \int  F^{22}, \quad F \in H^3 \cap L^{22}.
  }
  \ee
Then a standard L--S countable sequence of minmax critical points
  on the proper $H_0$ exists corresponding to increasing genus (category),
   and gluing/matching techniques for new patterns remain
  similar.

 Figure \ref{21.1} shows standard patterns $F_0$, $F_1$ from the first basic L--S family ${\mathcal F}_1$ and $F_{+6}$
from the non L--S family ${\mathcal F}_2$ for \ef{BJ1}. The next  Figure \ref{21.2} describes gluing
 two $\pm F_{+4}$ patterns to create $F_{+4,5,-4}$ from ${\mathcal F}_2$. We again
 underline a clear periodic linearized structure of matched $\pm
 F_{\pm 4}$ profiles in the gluing area near the origin $x=0$,
 where just two periods of  exponential tails therein with proper related shifting distances
 $a_5$'s
 are required. Of course, for locally creating an odd pattern
one should require two other conditions
 $$
F_\s=F_\s''=F_\s^{(4)}=0 \quad \mbox{at the matching point}.
  $$
 At the same time, note that this is not a first matching of $\pm
 F_{\pm 4}$, and not a second, but actually a sixth one.
  The first one with $a_0=0$, creates a single transversal zero in
 between those two structures and leads to the pattern
 $F_{+4,1,-4}$.

\begin{figure}[htbp]
\begin{center}
\subfigure[Patterns $F_{0}$ and $F_1$]{
\includegraphics[scale=0.52]{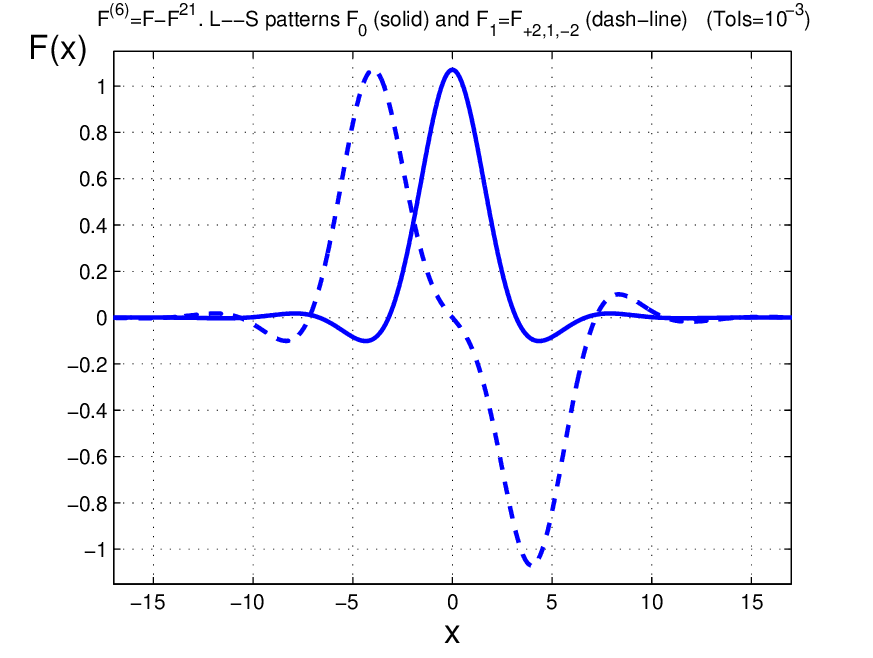} 
} \subfigure[$F_0(x)$ and $F_{+6}(x)$]{
\includegraphics[scale=0.52]{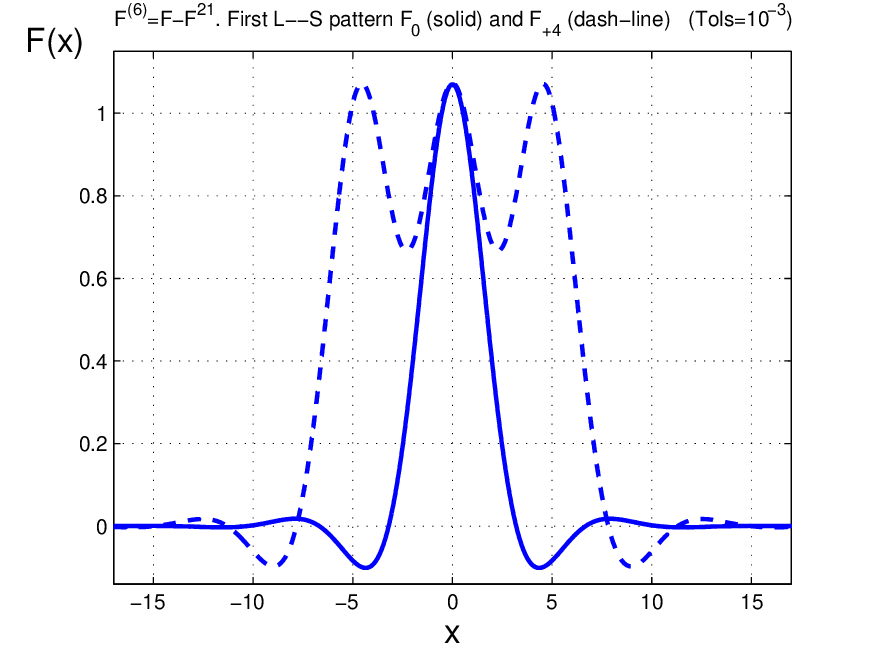} 
}
\caption{Standard
(``dipole", a dash line) patterns of the S--L
type, and non S--L pattern $F_{+6}$ (dash line) of the ODE
\ef{BJ1}.}
\label{21.1}
\end{center}
\end{figure}

\begin{figure}[htbp]
\begin{center}
\includegraphics[scale=0.55]{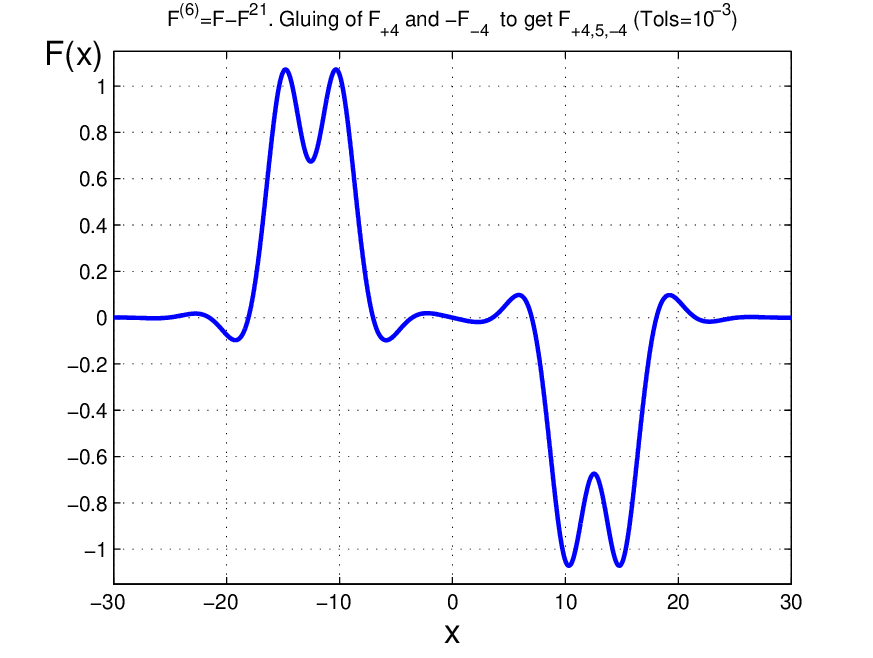} 
\caption{Gluing the pattern $F_{+4,5,-4}$.}
\label{21.2}
\end{center}
\end{figure}

\section{Semilinear elliptic bi-harmonic  equations in $\ren$}
 \label{SectRN1}


We now begin to discuss related nonlinear {\em elliptic} problems.

\subsection{The cubic equation with odd nonlinearities}

First, we consider the elliptic equation with odd nonlinearity
 \be
 \label{ell1}
 \D^2F=-F+F^3 \inB \ren, \quad F(x) \to 0 \asA x \to \iy.
  \ee
The linearized analysis about $F=0$ for the  linear equation
\be
\label{ell2}
 \D^2 F=-F \inB \ren \forA |x| \gg 1
 \ee
 can be partially  performed by a standard  method of separation of variables for linear PDEs giving leading radially
 exponentially decaying solutions (as for $N=1$) plus their 
 angular  distribution via  superpositions of the Laplace--Beltrami operator $\D_\s$:
  \be 
  \label{Belt1}
  \tex{
  \D=\D_r + \frac 1{r^2} \D_\s \LongA
  \D^2=(\D_r + \frac 1{r^2} \D_\s )^2= \D^2_r + \frac 1{r^2} \D_r \D_\s + \D_r \frac 1{r^2} \D_\s + \frac 1{r^4} \D_\s^2.
 }
 \ee 
The Laplace-Beltrami operator $\D_\s$ on the unit
sphere $S^{N-1}$ in $\ren$ is a regular operator with discrete
spectrum in $L^2(S^{N-1})$ (each one repeated as many times as its
multiplicity)
$\s(\D_{\s}) = \{\l_k=-k(k+N-2), \,\, k \ge 0\}$,
and an orthonormal, complete, closed subset $\{f_k(\s)\}$ of
eigenfunctions which are homogeneous harmonic $k$-th order
polynomials restricted to $S^{N-1}$. 
 Solutions of \ef{ell2}
 are exponentially decaying at infinity (see Section \ref{SectRad4}):
  \be 
  \label{As00}
  F(x) = O\big(|x|^{-\frac {N-1}2} {\rm e}^{-{|x|}/{\sqrt 2}}\big) \quad \mbox{as} \quad x \to \iy. 
  \ee

The problem \ef{ell1} is variational  and solutions $F \ne 0$ can be studied as in 1D b
as  critical points as follows:
 \be
 \label{ell31}
\begin{matrix}
 \Phi(F)= \frac 12 \, \int [(\D F)^2 + F^2] - \frac 14 \,
 \int F^4 \inB H^2(\ren), 
 \\
 F=r_0(v)v
\,\, \mbox{on the manifold} \,\,
 v\in  H_0= \big\{ \int [(\D F)^2 + F^2]=1 \big\},
 \\
\mbox{where}\,\,  r_0(v)= \frac 1{\sqrt{\int v^4}}, \,\,\,\Phi(r_0(v)v)= \frac 1{4 \int v^4},
\,\,c_k=\int v_k^4, \,\,\, C_k= \frac 1{4c_k}, \,\, k \ge 0.
    \end{matrix}
    \ee
Here we again apply the PFM \cite{Poh79, Poh08} and the L--S category theory \cite[\S~5.5]{Berger} and use the same calculus and notations as in Section \ref{S2.LS} for the quadratic equation.
   For the main references and  various applications of  the PFM and L--S genus/category theory to analogous
    nonlinear elliptic and ODE problems, see \cite[Ch.~1]{GMPBook}
  and \cite{AEGnegI}. 
  
  As the main conclusion, we obtain that \ef{ell31} admits a countable family of L--S critical points
  which as usual we denote by $\{F_k(x)\}_{k=0}^\iy$, and next we will try to  describe their properties and further extensions by using our previous experience in \cite{AEGnegI}.

 For simplicity, we restrict our attention to  the minimal dimension $N=2$ with a clearer geometric interpretation, where,
  in a ``DSs" language and in a natural sense,
  \be 
  \label{HomR2}
 \mbox{each} \,\,\, F(x) \,\,\, \mbox{represents a {\em ``homoclinic surface"} in variables $x=(x_1,x_2) \in \re^2$}.
 \ee
 We will also need to use suitable notions of   {\em ``periodic surfaces"} in $\re^2$ and others.

\smallskip

Our first suggestions (some are formal) concerning patterns  or homoclinic surfaces in $\re^2$ 
 are as follows. These patterns are  characterized by  much more complicated indices than in 1D
and we use simple ones below to underline their partial odd, even, or radial symmetry properties.

\begin{enumerate}

\item[{\bf (i)}] The first pattern $+F_0(x)$ corresponding  to the first minimum critical value $C_0>0$ ($C_*=0$ for $F=0$)  of the functional \ef{ell31} is {\em unique},   radially symmetric and satisfies  an ODE  to be treated below.
Let us mention again that these properties of the first non-zero pattern $F_0$ of the functional \ef{ell31} can be  covered by symmetrization and level set optimization approaches  in the elliptic theory \cite{GGS10},
though such applications to nonlinear elliptic problems in $\ren$ were not fully justified.

\item[{\bf (ii)}] The second pattern $F_1(x)$ 
 is expected to be not radial
and to take a form of a {\em 2D-dipole}, i.e., it can be constructed  from the first solution in the half-space $\re^2_+=\{x_1>0\}$
with the anti-symmetry conditions on the boundary
 \be 
 \label{F1Sym}
F=F_{x_1 x_1}=0 \atA x_1=0.
\ee
The whole pattern $F_1(x)$ in $\re^2$ is then obtained by the odd reflection $F_1(-x_1,x_2)=-F_1(x_1,x_2)$.
It seems that $F_1(x)$ should  approximately mimic a structure of a double-hump pattern consisting of two neighbouring one in {\bf (ii)} $\sim F_0(x_1-a_1,x_2)$ and $\sim F_0(x_1+a_1,x_2)$ where $a_1>0$ is a minimal distance between $\pm F_0$'s allowing such a matching.
The problem in $\re^2_+$ with the conditions \ef{F1Sym} is also variational, admits the corresponding L--S family of patterns, so that this created already a countable subset of other patterns. One can expect that this sequence of solutions of \ef{ell1}, \ef{F1Sym} of genus $\rho=0,1,2,...$ contains the one of genus $\rho=1$ for \ef{ell1} in $\re^2$. The functional  subset  ${H}_0$ in \ef{ell31} has an infinite category.

\item[{\bf (iii)}] The next pattern $F_2(x)$ most probably  is  radially symmetric and can be obtained by ODE analysis, see the next section.
Moreover, by the PFM and the L--S theory
\be
\label{Ex99}
\exists \,\, \mbox{a countable family of {\em even} radially symmetric patterns $\{F_{2l}(r)\}$.}
\ee
On the other hand similar to the 1D case there is a formal possibility to have a sequence of ``even" non-radial patterns $\{\hat F_{2l}(x)\}$ which are composed from $2l$  most densely packed, properly matched  patterns $\sim \pm F_0(x)$, which are circularly symmetric concentrated around the origin $0$. Such patterns are usually not of  the L--S type.

\item[{\bf (iv)}] The origin of $F_3(x)$ and other odd patterns $F_{2l+1}(x)$, $l=1,2,...$
can be multi-fold and complicated. For instance, $F_3(x)$ might consist of three first patters $\sim$ $+F_0, \, -F_0,\, +F_0$
densely packed along the $x_1$-axis, i.e., as happens in 1D but we are not sure that such a pattern $F_3$ (if any)
is an L--S one. The ``line" of the centers of $l$ shifted and packed  $\sim \pm F_0$ along it to create a pattern $F_l$ for $l \ge 3$ cannot be arbitrary and must follow a periodic structure of exponential tails described by the separator of variables via \ef{ell2}, \ef{Belt1}. 

As another possibility, these 
 might be obtained by minimizing the functional in sectors 
${\mathcal A}_l=\{(r,\varphi):\, r  >0,\,\, 0<\varphi<\varphi_l= \frac {\pi}{l+1}\}$ with the same normal anti-symmetry conditions on the boundary rays with ${\bf n}$ being its unit outward normal vector: 
 \be 
 \label{F771}
 F=F_{\bf nn}=0 \quad (\mbox{Dirichlet-Navier regular b.c.'s, the origin  O is regular}),
\ee 
 allowing the invariant odd reflection at the boundary. Again, each problem in ${\mathcal A}_l$ has a sequence of L--S patterns of categories $\rho=0,1,2,...$ for $H_0$ in the sector and 
 one with $\rho=2l+1$ for $H_0$  in $\re^2$ is expected to exist. 
 Hence, further solutions of \ef{ell1}, \ef{F771}
in those sectors exist, creating other countable patterns families, which eventually can 
lead  to multi-ray star-shaped periodic surfaces formally  corresponding to the infinite genus, {\em etc.}

 
\end{enumerate}

 Here we can observe an infinite subset  of countable L--S families of patterns, though the overall subset is expected to be much larger as it happened in 1D. Unfortunately, any reasonable mathematics concerning matching/gluing of various patterns in $\ren$ is currently absent. 
 
 \smallskip
 
 {\bf Remark: a quasilinear elliptic equation.} Similar properties of solutions can be true for other nonlinear elliptic equations. As a curious example we present an equation with a composed operator 
 with odd nonlinearities including a 4th-order $p$-Laplacian one:
  \be 
  \label{NewEq}
  \tex{
  \D ((\D F)^2 \D F)=-|F|F + F^5 \LongA \Phi(F)=  \int_{\ren}\big[\frac 14 (\D F)^4+\frac 13 |F|^3 \big] - \frac 16 \, \int_{\ren} F^6.
  }
  \ee
 Of course, \ef{NewEq} assumes a preliminary   delicate study of the asymptotic  behaviour of solutions near finite interface surfaces  $\partial \{F=0\}$ (and probably on $\partial \{F=1\}$), 
 but in general  the variational analysis does not require such a detailed information and  just necessary embeddings of the functional spaces in \ef{NewEq}. Note that the oscillatory behaviour of $F(x)$  close to interface surface 
 is asymptotically 1D in the normal direction to $\partial \{F=0\}$ and hence has an algebraic nonlinear structure similar to \ef{Nn2}. For the semilinear equation 
 $$
  \tex{
  \D ^2 F=-F^3 + F^5 \LongA \Phi(F)= \int_{\ren}\big[\frac 12 (\D F)^2+ \frac 14 F^4 \big] - \frac 16 \, \int_{\ren} F^6
  }
 $$
 the oscillatory behaviour at infinity is also algebraic and obeys \ef{mCU1}.
 
\begin{enumerate}
 
\item[{\bf (v)}] {\sc A ``chessboard" periodic solution.} We fix a square $S_{\rm R}$ in $\re^2$ with a side $R>0$ and consider the functional
 \ef{ell31} for functions in $H^2_0(S_{\rm R})$ with the same conditions \ef{F771}.  Since the eigenvalues
 $\{\l_k(R)\}$ of $\D^2$ in $S_R$ behave as
 $$
 \l_k(R)= \l_k(1) R^{-4} \to 0 \asA R \to \iy
 $$
 the genus $\rho_R= \sharp\{\l_k(R)<1\}$ of $H_0$ in $H^2_0(S_{\rm R})$ can be arbitrarily large for  $R \gg 1$, see \cite[\S~6.6]{Berger} and similar applications in  \cite{AEGnegI} and \cite[\S~1.3]{GMPBook}.
 Therefore, there exist at least $\rho_R$ different L--S critical points of the functional. Being reflected at the boundary in the odd manner again and again, these create $\rho_R$  periodic ``chessboard-type" solutions in $\re^2$.  
 
 \item[{\bf (vi)}] {\sc Triangular-shaped  periodic solutions.} Choose a large equilateral triangle $T_{\rm R}$, {\em etc.}
  
  {\bf ...}
  \end{enumerate}
  
  Finally, we note that according to our 1D experience
   those periodic solutions in $\re^2$ can create 
 various patterns by skipping all remote humps and rearranging the rest of them by a complicated matching procedure
 which used a periodic structure induced by exponential tails. This is a completely open problem.

\subsection{Back to the  quadratic nonlinearity: surprisingly,  much less is  known}

Consider the ODE problem \ef{N1.1} in the elliptic
setting
 \be
 \label{ell1Q}
 \D^2F=-F+F^2 \inB \ren, \quad F(x) \to 0 \asA x \to \iy.
  \ee
The first variational pattern $F_0$ is then obtained from the
corresponding functional by using the PFM and the L--S theorem (see Section \ref{S2.LS}):
 \be
 \label{ell31Q}
  \begin{matrix}
 \Phi(F)= \frac 12 \, \int [(\D F)^2 + F^2] - \frac 13 \,
 \int F^3 \inB H^2(\ren),
 \\
F=r_0(v)v\,\,\, \mbox{on} \,\,\,
   H_0= \big\{ \int [(\D F)^2 + F^2]=1 \big\}, \,\,\,\mbox{\em etc.}
    \end{matrix}
    \ee
 The functional is not even, the L--S approach does not apply, so that we can much less say rigorously about a general structure of the patterns and periodic subsets (here again $N=2$):


\begin{enumerate}

\item[{\bf (i)}] The first patterns $F_0(x)$ delivers  the absolute  minimum value $C_0>0$ of the functional \ef{ell31Q}, is  radially symmetric and satisfies an ODE, see below.

\item[{\bf (ii)}] Similar to the above cubic problem, we suggest that  the second pattern $F_1(x)$ 
 is not radial and is expected to be composed from  two neighbouring  radial first patterns  $F_0(r)$, $r= \sqrt{(x_1\pm a_1)^2+x_2^2}$, where $a_1>0$  is the first value for which such a gluing exists. This   
 gluing occurs at the symmetry line $\{x_1=0\}$ with the $x_1$-even symmetry conditions on the normal derivatives
 \be 
 \label{F1SymQ}
F_{x_1}=F_{x_1 x_1 x_1}=0 \atA x_1=0.
\ee
Then it is the absolute minimum point of \ef{ell31Q} in $\re^2_+=\{x_1>0,x_2 \in \re\}$ on functions from $H_0$ satisfying \ef{F1SymQ}. Thus,  
$F_1(x)$ can be composed  by matching at the $x_2$-axis  $\{x_1=0\}$ from two $F_0(x \pm a_1,x_2)$
with a minimally possible shift parameter $a_1>0$,
Due to \ef{F1SymQ}, the reflection $x_1 \mapsto - x_1$ gives a pattern $F_1(x_1,x_2)$ in $\re^2$. 
We expect that such a critical point is not unique and 
 there exists a countable sequence $\{a_k \sim \pi \sqrt 2 k, k \gg 1\}$ for which such a matching is available so that
 a countable family of solution of the problem \ef{ell1}, \ef{F1Sym} in $\re_+$ exists.
 For instance, after the reflection, we expect patterns of an arbitrary number of double humps concentrated along $x_2$-axis for $x_1>0$ and $x_1<0$ and expanding as $x_2 \to \pm \iy$. Those  represent  finite pieces of a 2-rays periodic orbit quite similar to $\{F_k\}$ as ``pieces" of $\Gamma_{\rm max}$ in 1D. Other types of patterns associated with different periodic surfaces can occur in such a construction with  increasing  complexities.

\item[{\bf (iii)}] Similarly, basic patterns $F_{2l-1},\, l=2,3,...$, are obtained by minimizing the functional in sectors 
${\mathcal A}_l=\{(r,\varphi):\, r  >0,\,\, 0<\varphi<\varphi_l= \frac {2\pi}{l+1}\}$ with the same normal symmetry conditions on the boundary rays with $n$ being its unit normal vector: 
 \be 
 \label{F771Q}
 F_{\bf n}=F_{\bf  nnn}=0 \quad (\mbox{Dirichlet-Neumann regular b.c.'s, the origin  O is regular}),
\ee 
 allowing the invariant rotation in $\re^2$ by the angle $\varphi_l$. Then  such $F_l$ 
 clearly are the absolute minimum point of \ef{ell31Q} in ${\mathcal A}_l$ and corresponds to the radial $F_0(|x|$ restricted to ${\mathcal A}_l$, but we expect that further critical points could give more nontrivial solutions.
  Note that by the normalization in \ef{ell31Q} the steady state $F \equiv 1$ is not acceptable. 
  Further solutions of \ef{ell1Q}, \ef{F771Q}
in those sectors can create other countable patterns families and since the genus $\rho({\mathcal A}_L)=\iy$
this construction can lead to 
  $l$-ray star-shaped periodic surfaces.

\item[{\bf (iv)}] ``Even" patterns $F_{+2l}$ (from ${\mathcal F}_{2R}$) can  be radial.
This means that $\Gamma_{\rm minR}$ is  not a periodic orbit (in $r$) but approaches  the 1D periodic one
$\Gamma_{\rm min}$ as $r \to \iy$.

\item[{\bf (v)}] We expect other radial  patterns $\sim  F_{2l}$ for $l \ge 1$ which can be constructed via the ODE.
Those patterns as $l \to \infty$ will form a kind of $\Gamma_{\rm maxR}$, not a periodic orbit but similarly approaching a 1D one $\Gamma_{\rm max}$ as $r \to \iy$.
The above shows that a blow-up unstable  attractor $\sim W^{2,\infty}_{\rm Rad}$ for the radial problem can be defined   in the direction 
of increasing of the time $r>0$ with a similar two embedded wings $\Gamma_{\rm maxR/minR}$ of the asymptotic  periodic  geometric structure to be seen
for $r \gg 1$. In other words, in the radial setting, two  families ${\mathcal F}_{\rm 1R,2R}$ composed from finite
pieces of
 two ``almost" periodic orbits $\Gamma_{\rm maxR/minR}$ exist as in 1D but possibly those are not anymore
 the basic (main) families which now belong to essentially non-radial patterns governed by elliptic problems. 
Such a classification requires an {\em a priory} knowledge of their critical values and/or  a subtle 
application of the Mountain Pass Lemma (if applied) or other profound variational techniques, which inevitable would demand a clearer knowledge of
possible  geometric shapes of patterns under scrutiny.

\item[{\bf (vi)}] Concerning many other $l$-rays, ``chessboard", triangular-shaped periodic,  chaotic (non-homoclinic) surfaces, and the corresponding attractor $W^{\{?\}}$
of the nonlinear elliptic equation \ef{ell1}... 

\end{enumerate}

We stop at this moments and will not try even to attempt  to describe this incredible
and sometimes imaginable  amount of homoclinic, star-, not star-shaped, chessboard-shaped ($\sim F_0$'s packed in a chessboard order on the whole plane)  periodic and chaotic surfaces in $\re^2$ which can be generated by the elliptic equation \ef{ell1Q}.
Some given conclusions and many possible others  are   suggestions only, and, in particular and again, the actual distribution of such patterns among classes ${\mathcal F}_{1,2}$ and others should be checked by using numerical 
estimates of their critical values of the functional. At least, as in 1D case, this would allow us to find those real families ${\mathcal F}_{1,2}$ having larger critical values among others of a similar geometric shape and/or the same number of positive dominant humps.
Further patterns are supposed to be  constructed by matching/gluing of those mentioned above. 
We now show how it works in the radial setting where we can fully use a  1D experience achieved earlier.

\smallskip


{\bf Remark.} Those tricks in the construction of various patterns in $\re^2$ are easier to explain 
for the corresponding P-L approximation of \ef{ell1}:
  \be 
   \label{R98}
\tex{
 \Delta^2 F= | F - \frac 12|- \frac 12 \quad \mbox{in} \quad \re^2.
 }
  \ee 
 Such a simplified P-L approximation is reduced to linear elliptic equations in the corresponding domains which can be analyzed and even sometimes solved explicitly, \cite{AEGII}. This and related P-L problems are better suited to classify those homotopic, periodic, and  other surfaces.



 \section{Radial quadratic ODE in $\ren$: observing multi-layer patterns}
 \label{SectRad4}

We are back to our original quadratic model \eqref{ell1Q}. 
Radially symmetric solutions $F=F(r)$, $r=|x|>0$ of the elliptic problem \ef{ell1Q}
satisfy the ODE
\be
\label{ell51}
 \tex{
    \D^2_r F \equiv F^{(4)}+ \frac{2(N-1)}r \, F''' +
   \frac{(N-1)(N-3)}{r^2} \, F'' +
   \frac{(N-1)(3-N)}{r^3} \, F' =-F +F^2, \,\,\, r>0,
   }
   \ee
and $F(r) \to 0$ 
as $r \to
\iy$. In view of the radial symmetry, there holds
 \be
 \label{ell21}
 F'(0)=F'''(0)=0.
  \ee
Since on  smooth functions $F(r)$ with uniformly bounded derivatives
 \be
 \label{ell4}
 \tex{
  \D^2_r F = F^{(4)} + \frac {2(N-1)}r \, F''' + O \big( \frac 1 {r^2} \big) \asA r \to \iy,
  }
  \ee
  the 2D asymptotic exponentially decaying  manifold is similar
to that  for $N=1$ (i.e., $F(r) \sim {\rm e}^{a r}$ implies $a^4=-1$), but has  an extra algebraic
factor due to the term $\sim \frac 1 r$ in (\ref{ell4}):
 \be
 \label{ell15}
 \tex{
F(r)= r^{- \frac {N-1}{ 2}} \,\eee^{-\mu r}\big[C_1 \cos(\mu
r)+C_2 \sin(\mu r)\big]+... \asA r \to \iy, \quad C_1, C_2 \in
\re, \,\, \mu = \frac 1{\sqrt 2}.
 }
 \ee



\subsection{Numerical evidence}

For solving \ef{ell51}, a  regularization of the operator at $r=0$
by replacing
 $$
 \tex{
 \frac 1r \mapsto \frac 1{\sqrt{\e^2+r^2}} \quad \mbox{with, typically,}
  \quad \e=10^{-2} \,\,\,\mbox{up to} \,\,\, \e=10^{-4},
  }
  $$
  is used that is
   enough not to perturb   the required results.

 In Figure \ref{FRad1}, we present the first radial pattern $F_0(r)$
 for dimensions $N=2,3,4$ together with the already studied for
 $N=1$ for the sake of comparison.

\begin{figure}[htbp]
\begin{center}
\includegraphics[scale=0.62]{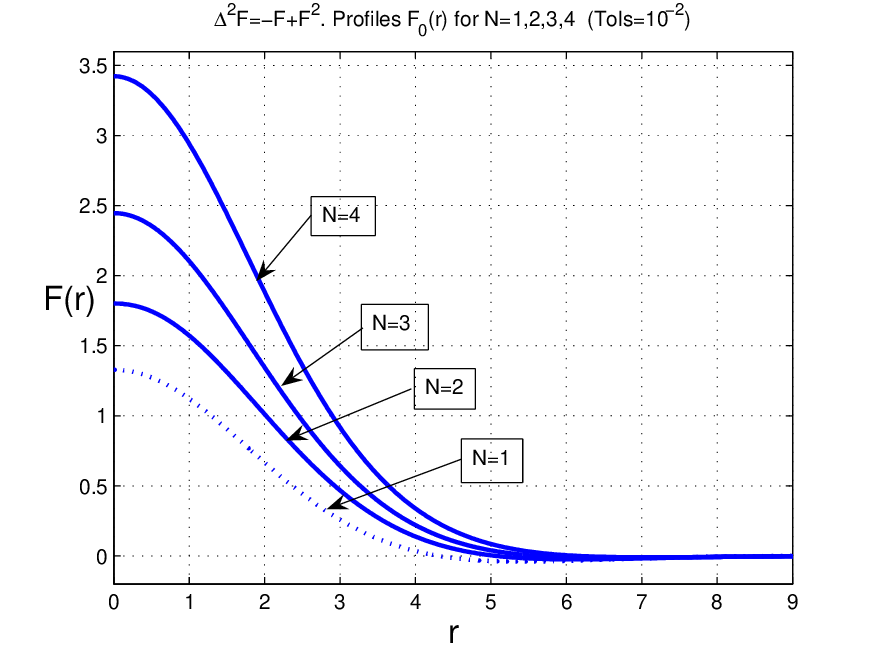}
\caption{The basic pattern $F_0(r)$ for $N=1,2,3,4$.}
\label{FRad1}
\end{center}
\end{figure}

\begin{figure}[htbp]
 \hfill   \hfill  \begin{minipage}[t]{0.40\textwidth} 
    \centering 
    \includegraphics[width=7cm,keepaspectratio]{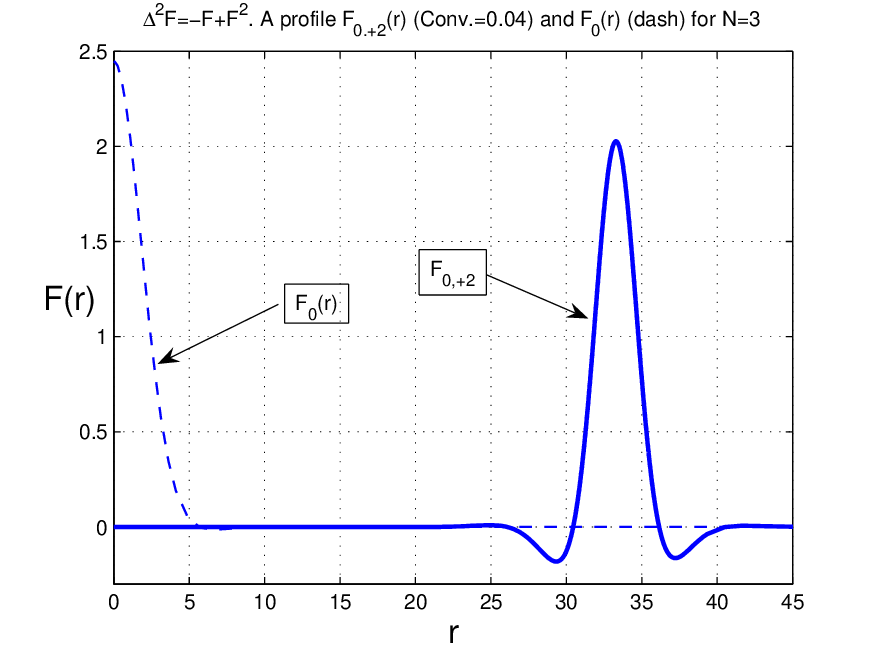}
    \caption{The  pattern $F_{0,+2}$ for $N=3$.}
\label{FRad2}
\end{minipage}
  \hfill   \hfill 
\begin{minipage}[t]{0.5\textwidth}
    \centering 
    \includegraphics[width=7cm,keepaspectratio]{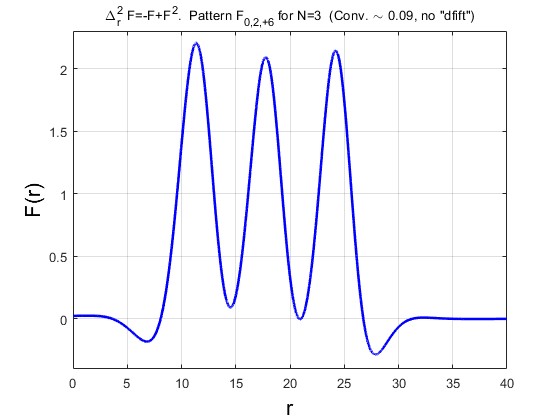}
\caption{A triple layer pattern $F_{0,2,+6}$  for $N=3$.}
\label{Fcy41}
\end{minipage}
\end{figure}

Consider a phenomenon, which was not available for $N=1$ where the ODE is autonomous in $x$.
Figure \ref{FRad2} shows a typical pattern, which in 1D just means
$F_0(x)$ shifted to the right. In the radial geometry in $\re^2$,
this is a pattern denoted by
 $$
 F_{0,+2}(r),
 $$
 where ``$0$" stands for an unknown finite number of zeros of a  small
 tail in a ``zero-hole" around $x=0$. Note again that such a simplified index
 cannot uniquely describe the pattern. Therefore,  on the plane
 $\re^2$, this solution represents a thin concentric layer with
 exponentially small tails around.

Such a double concentric  cylindrical layer for $N=2$ in shown in
Figure \ref{Fcyl1}. A similar ring is shown in Figure \ref{Fcy31}. Note
that in all such cases the exponential tails in a long enough
$0$-zone are practically invisible, sometimes numerically, so we
cannot count the total number of zeros therein. But as we know
from the 1D analysis, for $r \gg 1$, the semi-period of such
sin-oscillations is
 $$
 \tex{
 T_*= \pi {\sqrt 2}=4.4429...\, ,
}
 $$
and each semi-period contains a single zero (or a minimum point).
Hence, the formula for the number of zeros in a hole or a ring
$R_l$ of a given length $l \gg
 T_*$
 $$
 \tex{
 H_{\mbox{zeros in $R_l$}} \sim
 \frac {l}{T_*}
 }
  $$
   can be used but
not too close to the origin and to existing non-zero local
structures, where the linearized analysis does not apply and a
nonlinear interaction of patterns and their not that small tails
is in charge.

\begin{figure}[htbp]
\begin{center}
\includegraphics[scale=0.52]{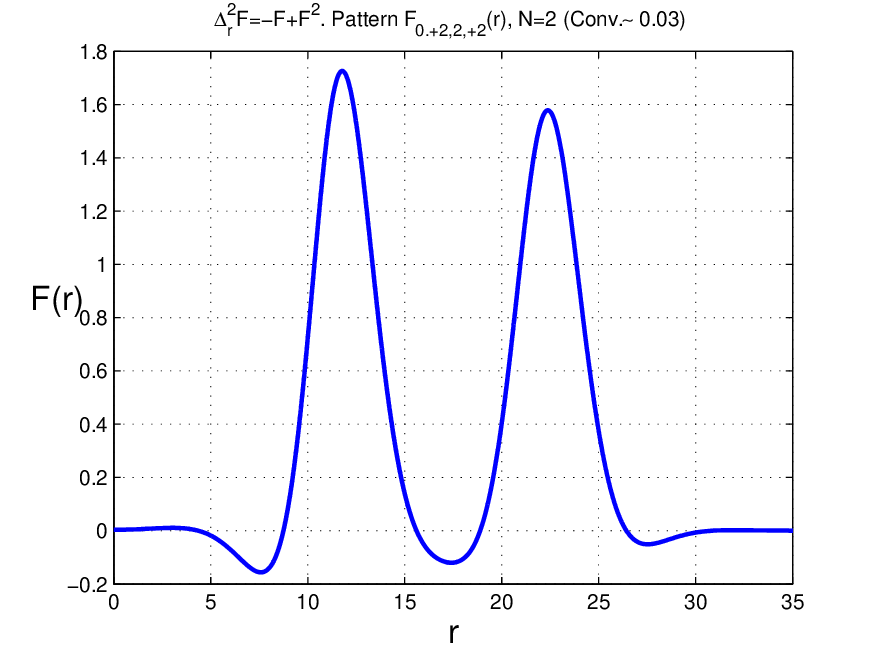}
\caption{Double layer pattern $F_{0,+2,2,+2}$ for $N=2$.}
\label{Fcyl1}
\end{center}
\end{figure}

\begin{figure}[htbp]
\begin{center}
\includegraphics[scale=0.52]{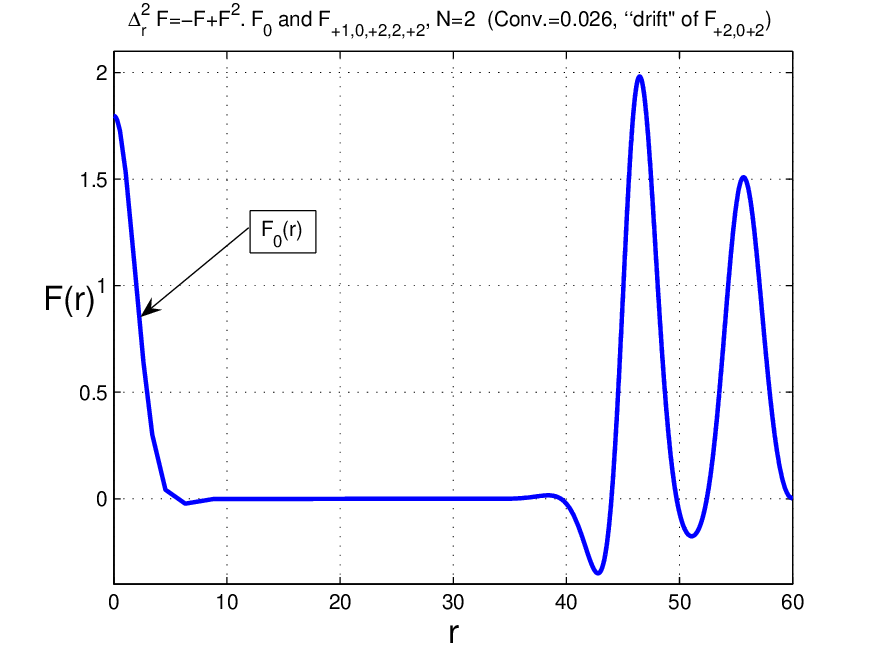}
\caption{A ring composed from $F_0(r)$ in the center and a double
layer around  creating overall a  pattern $F_{+1,2,+2,2,+2}$ for
$N=2$.}
\label{Fcy31}
\end{center}
\end{figure}

 The same phenomenon is shown in Figure \ref{FRad2} for $N=3$, which
 represents a solution of \ef{ell1} as  a concentric spherical layer in $\re^3$ around $x=0$
with a hole and out-space filled with exponentially small tails.
Such a triple spherical layer in $\re^3$ is presented in Figure
\ref{Fcy41} (in 1D it would look like a version of $F_{+6}$ with a
small perturbation near $r=20$ where it touches the zero level
$F=0$; the convergence is not perfect and is difficult to
achieve). For convenience,  Figure \ref{Fcy51} shows a triple layer
simultaneously  for $N=2$, 3, and, for comparison, also for $N=1$,
when it is just a pattern $F_{+6}$. A spherical seven-layer
pattern in $\re^3$,
 $$
 F_\sigma(r), \quad \sigma=\{{\mathbf 2},+2,2,+2,2,+2,2,+2,2,+2,2,+2,2,+2,2\},
  $$
with precisely ${\mathbf 2}$ zeros in a central hole, is shown in
Figure \ref{F77} together with looking  similar $F_{+14}$ for $N=1$.



\begin{figure}[htbp]
 \hfill   \hfill  \begin{minipage}[t]{0.40\textwidth} 
    \centering 
    \includegraphics[width=7cm,keepaspectratio]{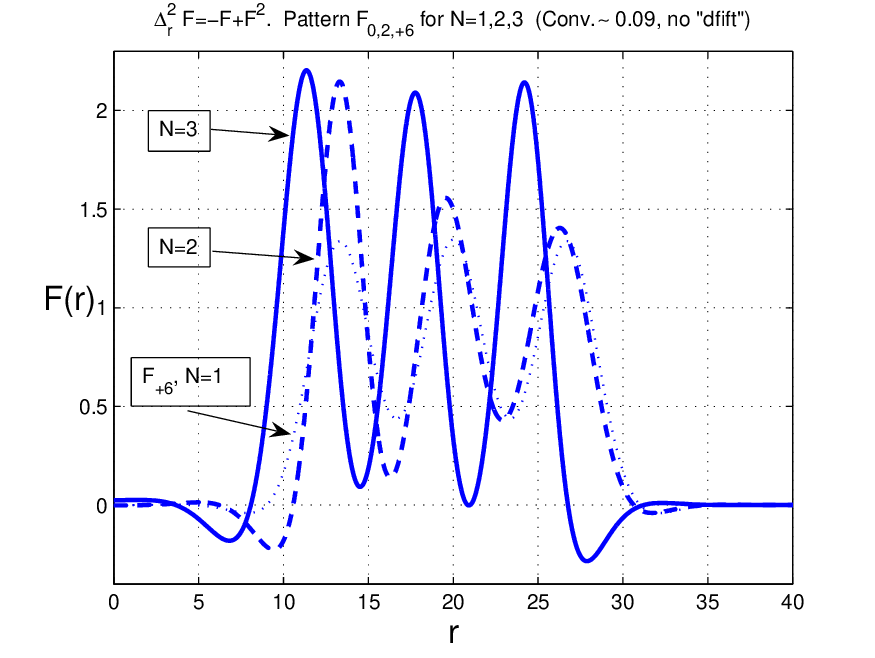}
    \caption{Triple layers for $N=2$, $N=3$, and the corresponding
$F_{+6}$ for  $N=1$ (a dotted line).}
\label{Fcy51}
\end{minipage}
  \hfill   \hfill 
\begin{minipage}[t]{0.5\textwidth}
    \centering 
    \includegraphics[width=7cm,keepaspectratio]{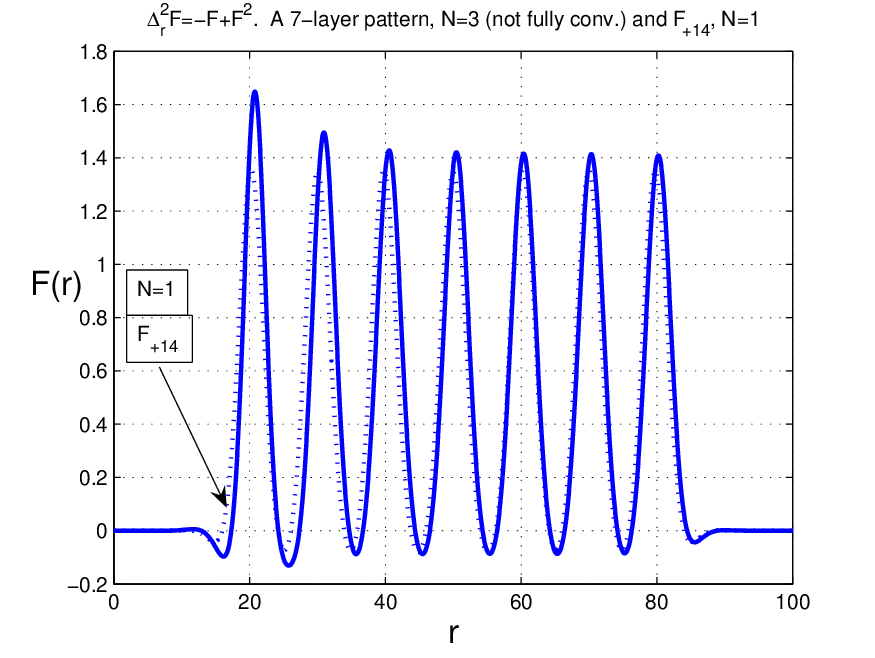}
\caption{A seven layers for $N=3$ (iterations are not fully
convergent) and $F_{14}$ for $N=1$ (a dotted line, an easy fast
convergence).}
\label{F77}
\end{minipage}
\end{figure}



A slow ``numerical drift" of a single pattern (like $F_0$, but not
exactly) for $N=4$ in the direction of increasing $r$ is shown in
Figure \ref{FRad31}, see a comment below.


Finally, in Figure \ref{FRad33}, $N=3$, we present a complicated
pattern
 $$
 \sim F_\sigma(r), \quad \sigma=\{+1,4,+2,5,+2,2,+4,3,+2\}
 $$
  composed from five different lower-order structures
described above separately. The graph is not fully convergent, but
there is no a ``numerical drift" to the right at the last stage of
numerics, so we believe that this structure really exists and will
be fully convergent finally.




\begin{figure}[htbp]
 \hfill   \hfill  \begin{minipage}[t]{0.40\textwidth} 
    \centering 
    \includegraphics[width=7cm,keepaspectratio]{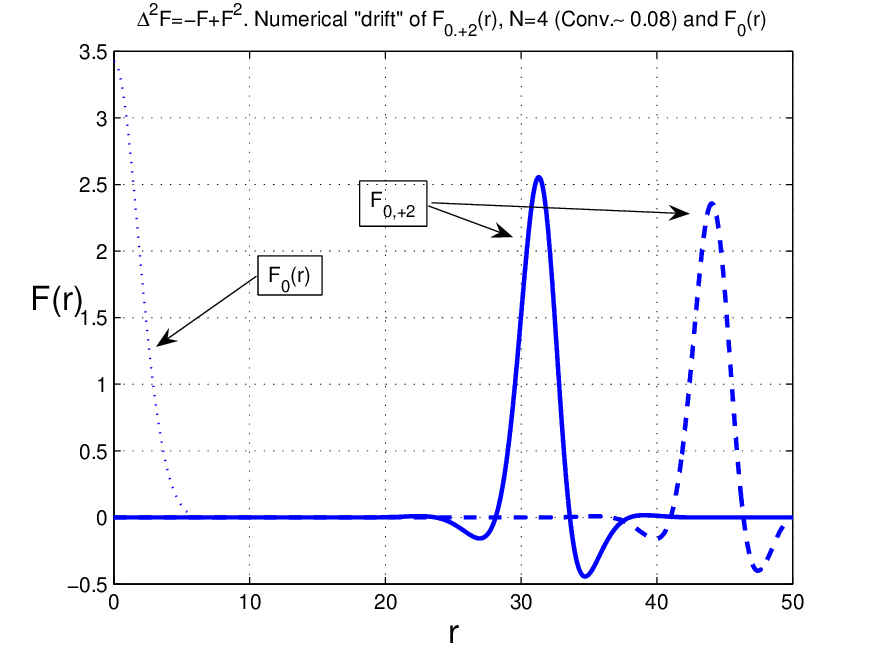}
   \caption{Slow ``drift" of an isolated  $F_0$-like pattern for
$N=4$.}
\label{FRad31}
\end{minipage}
  \hfill   \hfill 
\begin{minipage}[t]{0.4\textwidth}
    \centering 
    \includegraphics[width=7cm,keepaspectratio]{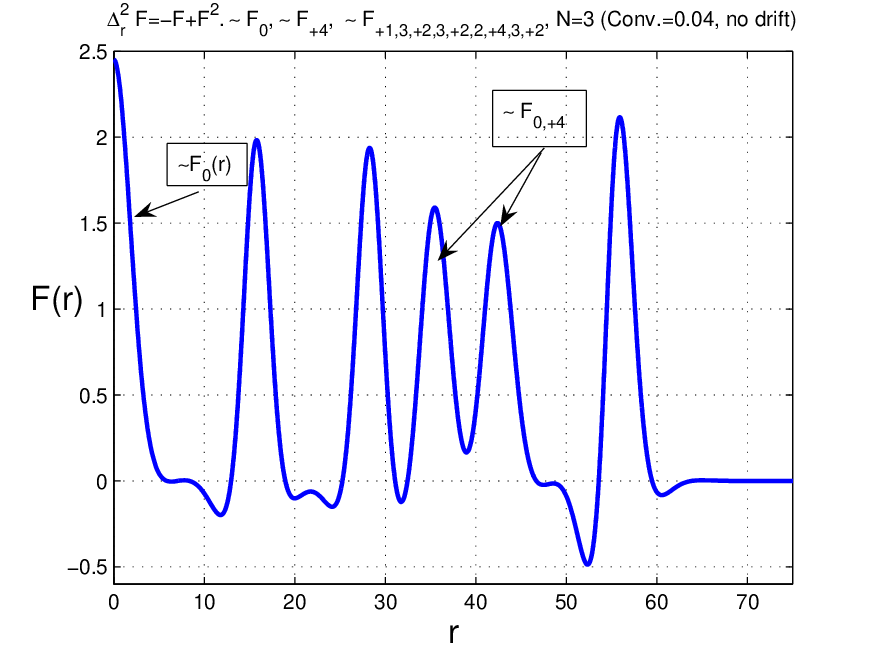}
\caption{A complicated pattern composed from five types of
different ones  for $N=3$.}
\label{FRad33}
\end{minipage}
\end{figure}

Note that numerical modelling of radial solutions $F(r)$ of
\ef{ell51} is more delicate than for \ef{N1.1} for $N=1$. And the
problem is not that the first one \ef{ell51} is more complicated,
It seems that an unavoidable and an invincible  feature occurs:
the overall geometrical radial frame is attached to the single
origin $x=0$ and they move together, unlike the case $N=1$, when,
by invariant translations, $x=0$ can be attributed to any point
not affecting all possible solutions. Therefore, in many numerical
tests, we observe a definite $r$-drift of patterns (already
achieved a proper correct shape) to larger $r$. It looks like
those patterns eventually want to get to $r=\iy$ and a full
``1D-freedom" for the rest of their life. Therefore in a couple of
figures above we did not achieve a full ``stationary" convergence
in our numerics and clearly indicated that.


\smallskip

{\bf Acknowledgements.}  The authors would like to thank Professor M. Grinfeld
 for interesting  discussions concerning  dynamical systems theory and applications.
 


\end{document}